\tikzset{ 
    table/.style={
        matrix of math nodes,
        row sep=-\pgflinewidth,
        column sep=-\pgflinewidth,
        nodes={rectangle,text width=3em,align=center},
        text depth=1.25ex,
        text height=2.5ex,
        nodes in empty cells,
        left delimiter=[,
        right delimiter={]},
        ampersand replacement=\&
    }
}
\def\smath#1{\text{\scalebox{0.9}{$#1$}}}
\def\sfrac#1#2{\smath{\frac{#1}{#2}}}
\newcommand*{\encircled}[1]{\relax\ifmmode\mathpalette\@encircled@math{#1}\else\@encircled{#1}\fi}
\newcommand*{\@encircled@math}[2]{\@encircled{$\m@th#1#2$}}
\newcommand*{\@encircled}[1]{%
  \tikz[baseline,anchor=base]{\node[draw,circle,outer sep=0pt,inner sep=.2ex] {#1};}}
\newcommand{\pnrelbar}{%
  \linethickness{\dimen2}%
  \sbox\z@{$\m@th\prec$}%
  \dimen@=1.1\ht\z@
  \begin{picture}(\dimen@,.4ex)
  \roundcap
  \put(0,.2ex){\line(1,0){\dimen@}}
  \put(\dimexpr 0.5\dimen@-.2ex\relax,0){\line(1,1){.4ex}}
  \end{picture}%
}
\newcommand{\precneq}{\mathrel{\vcenter{\hbox{\text{\prec@neq}}}}}
\newcommand{\prec@neq}{%
  \dimen2=\f@size\dimexpr.04pt\relax
  \oalign{%
    \noalign{\kern\dimexpr.2ex-.5\dimen2\relax}
    $\m@th\prec$\cr
    \noalign{\kern-.5\dimen2}
    \hidewidth\pnrelbar\hidewidth\cr
  }%
}
\newcommand*\textmathversion{\csname textmv@\math@version\endcsname}
  \newcommand*\textmv@normal{m}
  \newcommand*\textmv@bold{b}
\renewcommand{\dateseparator}{-}
\renewcommand{\today}{\the\year \dateseparator \twodigit\month
\dateseparator \twodigit\day}
\newcommand{\QQ}{\mathbb{Q}}
\newcommand{\ZZ}{\mathbb{Z}}
\newcounter{dummypart}
\newcommand{\CC}{\mathbb{C}}
\newcommand{\ch}{\mathrm{ch}}
\newcommand{\delT}{\mathbb{S}}
\newcommand{\Ab}{\mathbb{A}}   
\newcommand{\pr}{\mathrm{pr}}
\newcommand{\Art}{\mathrm{Art}}
\newcommand{\Res}{\mathrm{Res}}
\DeclareMathOperator{\Gal}{Gal}
\numberwithin{equation}{section}
\newtheorem{theorem}[equation]{Theorem}
\newtheorem{proposition}[equation]{Proposition}
\newtheorem{lemma}[equation]{Lemma}
\newtheorem{corollary}[equation]{Corollary}
\newtheorem*{corollary*}{Corollary}
\newcommand*\MapsTo{%
  \xrightarrow{\raisebox{-0.7 em}{\smash{\ensuremath{\,\sim\,}}}}%
}
\theoremstyle{definition}
\newtheorem{assumption}[equation]{Assumption}
\newenvironment{theorembis}[1]{%
    \begin{theorem}%
}{%
    \end{theorem}%
    \addtocounter{equation}{-1} % Prevents numbering increment
}
\newtheorem*{notation*}{Notation}
\theoremstyle{definition}
\newtheorem{definition}[equation]{Definition}
\theoremstyle{remark}
\newtheorem{remark}[equation]{Remark}
\newtheorem{example}{Example}[section]
\newtheorem{note}[equation]{Note}
\newtheorem{note*}[equation]{Note}
\DeclareFontFamily{U}{wncy}{}
\DeclareFontShape{U}{wncy}{m}{n}{<->wncyr10}{}
\DeclareSymbolFont{mcy}{U}{wncy}{m}{n}
\DeclareMathSymbol{\sha}{\mathord}{mcy}{"58}
\renewcommand*\env@matrix[1][\arraystretch]{%
  \edef\arraystretch{#1}%
  \hskip -\arraycolsep
  \let\@ifnextchar\new@ifnextchar
  \array{*\c@MaxMatrixCols c}}
\newcommand{\negsmall}{\mkern-2mu}
\newcommand{\SL}{\mathrm{SL}}
\newcommand{\Gb}{\mathbf{G}}   
\newcommand{\Tb}{\mathbf{T}}
\newcommand{\Hb}{\mathbf{H}}
\newcommand{\Addresses}{{
  \bigskip
  \footnotesize
  
  \textsc{Department of Mathematics, University of California, Santa Barbara, CA 93106-3080}\par\nopagebreak
  \textit{E-mail address}: \texttt{swshah@ucsb.edu}

  \medskip
  \textsc{Department of Mathematics, Bilkent University, Ankara 06800, Türkiye}\par\nopagebreak
  \textit{E-mail address}: \texttt{swshah@bilkent.edu.tr}
}}
\providecommand\given{} % ensure it exists
\newcommand\givensymbol[1]{%
  \nonscript\;\delimsize#1\allowbreak\nonscript\;\mathopen{}%
}
\DeclarePairedDelimiterX\Set[1]\{\}{%
  \renewcommand\given{\givensymbol{\vert}}%
  #1%
}
\DeclareMathOperator{\Hom}{Hom}
\newcommand{\GG}{\mathbb{G}}
\newcommand{\OO}{\mathcal{O}}
\newcommand{\GL}{\mathrm{GL}}
\newcommand{\et}{\mathrm{\acute{e}t}}
\newcommand{\RR}{\mathbb{R}}
\DeclareMathOperator{\Spec}{Spec}
\title{Norm relations for CM points on modular curves} 
\author{Syed Waqar Ali    Shah}    
\date{}
    \DeclareFontFamily{U}{wncy}{}
    \DeclareFontShape{U}{wncy}{m}{n}{<->wncyr10}{}
    \DeclareSymbolFont{mcy}{U}{wncy}{m}{n}
    \DeclareMathSymbol{\Sha}{\mathord}{mcy}{"58}
\begin{document}

\begin{abstract}   Kolyvagin introduced the method of Euler systems to study the structure of Selmer groups of elliptic curves. In this semi-expository  article, we prove the horizontal norm relations for the CM points on modular curves underlying Kolyvagin's Euler system, with a view toward higher-dimensional generalizations.    
\end{abstract}    

\maketitle

\tableofcontents
\label{amotivatingexample}  
\section{Introduction} 
\subsection{The BSD conjecture} 
Let $A$ be an elliptic curve over $\QQ$. The Mordell–Weil theorem guarantees that the group $A(\QQ)$ of rational points on $A$ is finitely generated. It is a long-standing problem in number theory to describe the structure of this abelian group. A deep result of Mazur \cite{Mazur} identifies the possible isomorphism classes of torsion subgroups that can occur in $A(\QQ)$. The rank of $A(\QQ)$, on the other hand, remains far more mysterious.

Let $L(A_{/\QQ}, s)$ denote the Hasse–Weil $L$-function of $A$ over $\QQ$. It is given by an infinite Euler product in the complex variable $ s $ that converges absolutely for $\mathrm{Re}(s) > \tfrac{3}{2}$ and thus defines a complex analytic function in that region. A consequence of the celebrated modularity theorem \cite{WilesFermat, Conradetal}   is that $L(A_{/\QQ}, s)$ admits an analytic continuation to the entire complex plane. The famous conjecture of Birch and Swinnerton-Dyer \cite{WilesClay} asserts that  
\begin{equation} \label{BSD} 
   \mathrm{ord}_{s = 1} \, L(A_{/\QQ}, s) = \mathrm{rank}_{\ZZ} \, A(\QQ),
\end{equation} 
where `$\mathrm{ord}$' on the left-hand side denotes the order of vanishing of a complex analytic function. This conjecture is wide open at present. One of the major obstacles to making progress is finding a systematic supply of non-torsion points in $A(\QQ)$ whose behaviour can be explicitly tied to $L(A_{/\QQ}, s)$.

However, if one assumes that $\mathrm{ord}_{s = 1} L(A_{/\QQ}, s) \leq 1$, then it is possible to construct such points over an imaginary quadratic extension and use them to establish (\ref{BSD}). The modularity theorem asserts that the elliptic curve $A$ admits a \emph{modular parametrization}. More precisely, if $N$ denotes the conductor of $A$, there exists a dominant morphism  
\begin{equation}   \label{modularpara}     \pi   : X_{0}(N) \to A          
\end{equation}    
where $X_{0}(N)$ denotes the compactified modular curve of level $ \Gamma_{0}(N)$, which is the moduli space of generalized elliptic curves endowed with a cyclic subgroup of order $ N$.  Suppose that $E$ is an imaginary quadratic field which satisfies the so-called \emph{Heegner hypothesis}: all primes dividing $N$ split in $E$. We will view $E$ and all its extensions inside $\CC$, the field of complex numbers.  
Let $E[1]$ denote the Hilbert class field of $E$. Then the moduli interpretation of $X_{0}(N)$ allows one to define a ``distinguished'' point  
$$ x_{1} \in X_{0}(N)(E[1]) $$  
known as a \emph{Heegner point}. More precisely, a Heegner point in $X_{0}(N)(\CC)$ is defined to be a non-cuspidal point that, under the moduli interpretation, corresponds to a cyclic $N$-isogeny $A_{1} \to A_{2}$ of elliptic curves such that both $A_{1}$ and $A_{2}$ have complex multiplication by the ring of integers $\mathcal{O}_{E}$ of $E$. Such points exist in  $ X_{0}(N)(\CC)$ if  the Heegner hypothesis is satisfied \cite[Proposition 3.8]{darmon}, and the theory of complex multiplication implies that the are all defined over $ E[1]$. If $W$ denotes     the group of automorphisms of $X_{0}(N)$ generated by the Atkin–Lehner involutions for each distinct prime $p$ dividing $N$, then the set of all Heegner points as defined above is a finite principal homogeneous space for $W \times \Gal(E[1]/E)$ \cite[\S 1.3]{GrossZagier}. If we fix a complex uniformization of $X_{0}(N)(\CC)$ by the extended upper half-plane (which we do),   we can make a choice in this finite set using an explicit isogeny  constructed by fixing an ideal of $\mathfrak{N}   \triangleleft  \mathcal{O}_{E} $ of index $ N $ \cite[\S 3]{grosskoly}. This is the sense in which the point $x_{1}$ is ``distinguished.''

Now let $p_{1} := \pi  (x_{1}) \in A(E[1])$, and let $p_{E} \in A(E)$ be the trace of $p_{1}$ down to $E$. Write $L(A_{/E}, s)$ for the Hasse–Weil $L$-function of $A$ over $E$. A consequence of the Heegner hypothesis is that the sign of the functional equation for $L(A_{/E}, s)$ is $-1$, which in turn forces $\mathrm{ord}_{s = 1} L(A_{/E}, s)$ to be odd. In particular, $L(A_{/E}, 1) = 0$. The Gross–Zagier formula \cite{GrossZagier} shows that when the discriminant $ D $ is odd,\footnote{i.e., $ D \equiv 1 \pmod{4} $}  the point  $p_{E} \in A(E)$ is of infinite order if and only if the derivative $L'(A_{/E}, 1)$ is non-vanishing. That is,  
$$ \mathrm{ord}_{s = 1} \, L(A_{/E}, s) = 1 \implies \mathrm{rank}_{\ZZ} \, A(E) \geq 1. $$  
The Birch and Swinnerton-Dyer conjecture for $A$ over the field $E$ similarly posits that the rank of $A(E)$ should be $1$ whenever $\mathrm{ord}_{s = 1} \, L(A_{/E}, s) = 1$. One therefore hopes to derive the \emph{upper bound} $\mathrm{rank}_{\ZZ} \, A(E) \leq 1$ under the assumption that $p_{E} \in A(E)$ is non-torsion. Since $A(\QQ)$ is a subgroup of $A(E)$, we also end up bounding the original group.     

In \cite{kolyvagin}, Kolyvagin introduced such a bounding argument using what he referred to as an \emph{Euler system} for $A$. Kolyvagin's argument hinges on the observation that the Heegner point $x_{1}$ does not come alone, but rather belongs to a family of such points defined over abelian extensions of $E$ that satisfy certain \emph{norm relations} (sometimes also called \emph{trace} or \emph{distribution relations}). More precisely, for each positive integer $m$, let $E[m]$ denote the ring class extension of conductor $m$. Then for each $m$ relatively prime to $N$, one has a ``distinguished'' Heegner point   $$   x_{m} \in X_{0}(N)(E[m])  $$   again constructed using the fixed complex uniformization of $X_{0}(N)(\CC)$ and an explicit isogeny  defined by  a lattice in $ \mathbb{C}$.   Such points are defined abstractly as before, except   that    $\mathcal{O}_{E}$   is    replaced by an \emph{order} in $\mathcal{O}_{E}$. This distinguished choice ensures that for any rational prime $\ell$ that is inert in $E$ and relatively prime to $ m N $,    we have  
\begin{equation} \label{kolyrelation} 
   T_{\ell}(x_{m}) = \mathrm{Tr}_{\ell}(x_{m \ell}). 
\end{equation}
Here $T_{\ell}$ denotes the standard ``self-dual"  Hecke correspondence of degree $\ell + 1$, and $\mathrm{Tr}_{\ell}$ denotes the trace map from $A(E[m \ell])$ to $A(E[m])$. See \cite[\S 6]{Gross}   
and \cite[Proposition 3.10]{darmon}.      

Kolyvagin's ingenious argument employs the norm relations (\ref{kolyrelation}) in conjunction with Galois cohomology techniques to show that $\mathrm{rank}_{\ZZ} \, A(E) = 1$ if $p_{E} \in A(E)$ is non-torsion \cite[Theorem A]{kolyvagin}. From this, the desired result over $\QQ$ can be obtained as follows. Observe that  
$$ L(A_{/E}, s) = L(A_{/\QQ}, s) L(A'_{/\QQ}, s), $$  
where $A'$ denotes the quadratic twist of $A$ with respect to $E$. It can be shown that if $ \mathrm{ord}_{s=1} L(A_{/\QQ}, s) \leq 1 $, then there exists an imaginary quadratic field $ E $ of odd discriminant such that the  Heegner hypothesis for $ A $ is satisfied and $ \mathrm{ord}_{s=1} L(A_{/E}, s) = 1 $ \cite[Theorem~1]{Murty}. We can therefore safely assume that  $ E $ satisfies all of   these conditions.  On the other hand, the Galois action of $\Gal(E/\QQ)$ on $A(E)$ can be used to identify $A(\QQ)$ with the `plus part' of $A(E)$ and $A'(\QQ)$ with the `minus part' of $A(E)$, from which one sees that  
$$ \mathrm{rank}_{\ZZ} \, A(E) = \mathrm{rank}_{\ZZ} \, A(\QQ) + \mathrm{rank}_{\ZZ} \, A'(\QQ). $$  
Finally, one argues that $p_{E}$ lies in $A(\QQ)$ (up to torsion) if and only if the sign of the functional equation of $L(A_{/\QQ}, s)$ is $-1$ \cite[Proposition 5.3]{grosskoly}, which is equivalent to $\mathrm{ord}_{s = 1} \, L(A_{/\QQ}, s)$ being odd. This proves (\ref{BSD}) when the left-hand side is at most one.  For a detailed exposition of the arguments sketched here, see \cite{darmon} and  \cite[\S 4.]{Miller}.    

\subsection{The Bloch–Kato conjecture} 
The bounding argument introduced by Kolyvagin has since been axiomatized and applies more generally in the context of global $p$-adic Galois representations \cite{Rubin, KatoEuler, Perrin-Riou}. This is partly motivated by a vast generalization of (\ref{BSD}), known as the \emph{Bloch–Kato conjecture} \cite{BlochKato}, which posits that the order of vanishing at integer values of the $L$-function of a global $p$-adic Galois representation is related to the dimension of a Galois cohomology group known as the \emph{Bloch–Kato Selmer group}. A very active area of research nowadays is the establishment of new instances of this conjecture, under the assumption that the order of vanishing of the relevant $L$-function is at most one. In many cases studied in recent years, a key step toward this goal is the construction of an Euler system for the underlying Galois representation. Such a construction is usually carried out by exploiting the geometry of a Shimura variety and is motivated by a period integral that establishes an intimate relationship between the $L$-values of the Galois representation and the ``bottom class'' of the Euler system. In the case of the elliptic curve $A$, the Galois representation is the $p$-adic Tate module of $A$, the Shimura variety is the modular curve, and the period integral relation is provided by the Gross–Zagier formula.

The relation between (\ref{BSD}) and the Bloch–Kato conjecture can be elaborated via Kummer theory. Let $\overline{\QQ}$ denote the algebraic closure of $\mathbb{Q}$ in $\mathbb{C}$. For $p$ a rational prime, let $A[p^{n}]$ for $n$ a positive integer denote the $p^n$-torsion subgroup scheme of $A$, and let $$\mathrm{T}_{p}(A)  = \varprojlim\nolimits_{n} A[p^{n}] (\overline{\QQ})     $$ denote the $p$-adic Tate module of $A$. The Kummer sequence associated with $A[p^{n}]$ for each $n$ gives rise to the familiar exact sequence  
\begin{equation} \label{exact} 
   0 \longrightarrow A(\QQ) \otimes \ZZ/p^{n}\ZZ \longrightarrow \mathrm{Sel}(\mathbb{Q}, A[p^{n}]) \longrightarrow \Sha(A_{/\QQ})[p^{n}] \longrightarrow 0
\end{equation} 
where $\mathrm{Sel}(\QQ, A[p^{n}]) \subset \mathrm{H}^{1}(\mathbb{Q}, A[p^{n}](\overline{\QQ}))$ denotes the classical $p^{n}$-Selmer group of $A$, $\Sha(A_{/\mathbb{Q}})[p^{n}]$ denotes the $p^{n}$-torsion of the Tate–Shafarevich group of $A$, and the first non-trivial map is the Kummer map. Let us denote  
$$ \mathrm{Sel}(\mathbb{Q}, \mathrm{T}_{p}(A)) := \varprojlim\nolimits_{n} \mathrm{Sel}(\mathbb{Q}, A[p^{n}]) .  $$  
This is a finitely generated $\mathbb{Z}_{p}$-module. It has been conjectured that the Tate–Shafarevich group $\Sha(A_{/\mathbb{Q}})$ is always finite.  Assuming this, and since each $A(\QQ) \otimes \ZZ/p^{n}\ZZ$ is finite, the inverse limit of (\ref{exact}) over all $n$ gives rise to an exact sequence  
\begin{equation} \label{exactSelmer}  
   0 \longrightarrow A(\QQ) \otimes \ZZ_{p} \longrightarrow \mathrm{Sel}(\mathbb{Q}, \mathrm{T}_{p}(A)) \longrightarrow \Sha(A_{/\QQ})[p^{\infty}] \longrightarrow 0  
\end{equation} 
where $\Sha(A_{/\mathbb{Q}})[p^{\infty}]$ denotes the $p$-primary component of the conjecturally finite group $\Sha(A_{/\mathbb{Q}})$.  
Thus, we expect that  
$$ \mathrm{rank}_{\ZZ} \, A(\QQ) = \mathrm{rank}_{\ZZ_{p}} \, \mathrm{Sel}(\QQ, \mathrm{T}_{p}(A)) , $$  
and we may instead replace the conjectural equality (\ref{BSD}) with
\begin{equation}  \label{BK} 
   \mathrm{ord}_{s=1}\, L(A_{/\mathbb{Q}}, s) = \mathrm{rank}_{\mathbb{Z}_{p}} \, \mathrm{Sel}(\QQ, \mathrm{T}_{p}(A)) . 
\end{equation} 
Now the Selmer group $\mathrm{Sel}(\mathbb{Q}, \mathrm{T}_{p}(A))$ above coincides with the \emph{Bloch–Kato Selmer group} $$ \mathrm{H}^{1}_{f}(\mathbb{Q}, \mathrm{T}_{p}(A)) $$ of the Galois representation $\mathrm{T}_{p}(A)$ as defined in \cite[Definition 5.1]{BlochKato}\footnote{The choice of the open set $U$ in  that     definition does not matter by eq.\ (3.11.2) of \emph{op.\ cit.}}, and this purely cohomological definition applies to any $p$-adic Galois representation.  It is also possible to define the (shifted) $L$-function $L(A_{/\QQ}, s+1)$ entirely in terms of  $\mathrm{T}_{p}(A)$, and one can generalize this definition to arbitrary ``motivic'' $p$-adic Galois representations \cite[Definition 5.5]{BlochKato}. However, the meromorphic continuation of these more general $L$-functions is unknown, except when one can identify these functions with the $L$-functions of certain  automorphic representations. Nevertheless, assuming this continuation, the Bloch–Kato conjecture posits an analogue of (\ref{BK}). See, e.g., \cite[Conjecture 1.2.3]{KingsBloch} for a precise statement and  the   unpublished notes  \cite{Bellaiche} for a user-friendly treatment of various   topics     surrounding this conjecture.                   

\begin{remark} 
In \cite{KingsBloch}, the Bloch–Kato conjecture for an elliptic curve $A$ would be stated in terms of $\mathrm{V}_{p}(A) := \mathrm{T}_{p}(A) \otimes_{\ZZ_{p}} \QQ_{p}$ and its Bloch–Kato Selmer group $\mathrm{H}^{1}_{f}(\mathbb{Q}, \mathrm{V}_{p}(A))$, which is a $ \QQ_{p} $-vector space. But by \cite[Proposition B.2.4]{Rubin} and \cite[eq.\ 3.7.3]{BlochKato}, it is easy to see that this Selmer group is just $\mathrm{Sel}(\mathbb{Q}, \mathrm{T}_{p}(A)) \otimes_{\ZZ_{p}} \QQ_{p}$, so that  
$$ \dim_{\QQ_{p}} \mathrm{H}^{1}_{f}(\mathbb{Q}, \mathrm{V}_{p}(A)) = \mathrm{rank}_{\ZZ_{p}} \, \mathrm{Sel}(\mathbb{Q}, \mathrm{T}_{p}(A)). $$  
We also remark that the term involving Galois invariants in the statement of the general Bloch–Kato conjecture vanishes unless the Galois representation contains the $p$-adic cyclotomic character $\mathbb{Q}_{p}(1)$ as a sub-representation. This additional term is included to account for the simple  pole of the Riemann zeta function,   and  can   otherwise     be   ignored.    
\end{remark}

\subsection{Euler  systems}   \label{Eulerintrosec}    Let us recall the definition of an Euler system modeled on \cite[Definition II.1.1]{Rubin}, in a special case. Suppose $V$ is a $p$-adic Galois representation of $\Gal(\overline{\QQ}/\QQ)$ that is unramified away from a finite set of primes $S$, and let $T \subset V$ be a Galois-stable lattice. That is, $ T $ is  a   $\ZZ_{p}$-submodule of $V$ of $\ZZ_{p}$-rank equal to $\dim_{\QQ_{p}} V$    which is invariant under $\Gal(\overline{\QQ}/\QQ)$. Such a lattice always exists \cite[\S 1.1.2]{FontaineOuyang}. Let $\mathcal{N}p^{\infty}$ denote the set of all integers of the form $np^{r}$, where $n$ is a square-free product of primes not in $S \cup \{ p \}$ and $r$ is a non-negative integer. For each $m \in \mathcal{N}p^{\infty}$, let $\QQ(\mu_{m})$ denote the cyclotomic extension of $\QQ$ generated by $\mu_{m}$, the group of $m$-th roots of unity. An \emph{Euler system} for $T$ is a collection of Galois cohomology classes  
$$ c_{m} \in \mathrm{H}^{1}(\QQ(\mu_{m}), T) $$
for each $m \in \mathcal{N}p^{\infty}$, such that for each prime $\ell$ with $\ell m \in \mathcal{N}p^{\infty}$,  
\begin{equation} \label{ESdef} 
   \mathrm{cores}^{\QQ(\mu_{m\ell})}_{\QQ(\mu_{m})}(c_{m\ell}) = 
   \begin{cases} 
      c_{m} & \text{if } \ell = p, \\
      P_{\ell}(\mathrm{Frob}_{\ell}^{-1}) \, c_{m} & \text{if } \ell \neq p .
   \end{cases} 
\end{equation} 
Here $P_{\ell}(X) := \det(1- \mathrm{Frob}_{\ell}^{-1} X \, | \, T^{\vee}(1))$ denotes the reverse characteristic polynomial of the geometric Frobenius at $\ell$ acting on the Cartier dual $T^{\vee}(1) : = T^{\vee} \otimes_{\ZZ_{p}}     \ZZ_{p}(1)     $ of $T$, $\mathrm{Frob}_{\ell}^{-1}$ denotes a choice of   geometric Frobenius above $\ell$ (which acts on $\mathrm{H}^{1}(\QQ(\mu_{n}), T)$ for $ \ell \nmid n $ via  inverse Frobenius substitution $ \mathrm{Fr}_{\ell}^{-1} \in  \Gal(\QQ(\mu_{n})/\QQ)$), and $\mathrm{cores}$ denotes the corestriction map in Galois cohomology. Note that the polynomial $P_{\ell}(X)$ is also used to define the Euler factor appearing in the $L$-function of $V^{\vee}(1)$, and its appearance in (\ref{ESdef}) is the motivation for the term “Euler system.” Under suitable hypotheses, a non-trivial Euler system imposes non-trivial bounds on the Selmer group of $T^{\vee}(1)$. Let us note that when $T = \mathrm{T}_{p}(A)$ for an elliptic curve $A$, the Weil pairing induces an isomorphism $$ T \simeq T^{\vee}(1) , $$ and we say that $T$ is \emph{self Cartier dual} or \emph{polarized}. For such representations, one may make the aforementioned definition entirely in terms of the Euler factors of $T$.  

Traditionally, the relations in the case $\ell = p$ are referred to as \emph{vertical norm relations} or \emph{wild norm relations}, whereas the relations for $\ell \neq p$ are referred to as \emph{horizontal norm relations} or \emph{tame norm relations}.\footnote{While these relations are strictly speaking not independent of each other, one can often work “one prime at a time” by parametrizing the Galois cohomology classes by a space that admits a restricted tensor product decomposition over all but finitely many places.} The class $c_{1} \in \mathrm{H}^{1}(\QQ, T)$ is called the \emph{bottom class} of the Euler system. One can also define such systems for abelian extensions of number fields $F$ different from $\QQ$. In practice, one often restricts to layers of abelian extensions of a particular type, as the classes that can be constructed to fit such a system are only norm compatible over special   extensions. For instance, the ring class extensions $E[m]$ introduced above are abelian extensions of $E$ that are \emph{anticyclotomic} over $\QQ$, i.e., $\Gal(E/\QQ)$ acts on $\Gal(E[m]/E)$ by inversion. A collection of classes defined only for layers in such extensions and satisfying analogous norm relations is referred to as an \emph{anticyclotomic Euler system}.  

From the perspective of the Euler system relations (\ref{ESdef}), the usefulness of (\ref{kolyrelation}) arises from the fact that the operator $T_{\ell}$ essentially determines the local $L$-factor of the $p$-adic Tate module of $A$ at the prime $\ell \neq p$. More precisely, if $ \tilde{A} $ denotes the reduction of $ A $ at the prime $ \ell $ and  $p_{m}$ denotes the rational point $ \pi (x_{m}) \in A(E[m])$ where $ \pi $ is as in (\ref{modularpara}), then the relation (\ref{kolyrelation}) specializes to  
\begin{equation}   \label{kolyspecialrelation}       
a_{\ell} \,  p_{m} = \mathrm{Tr}_{\ell}(p_{m \ell}),  
\end{equation}    
where $a_{\ell} := \ell + 1 - |\tilde{A}(\mathbb{F}_{\ell})|$.   The same relations then hold for  the  cocylce classes    $ c_{m}  \in \mathrm{H}^{1}(E[m], \mathrm{T}_{p}(A))$ obtained as images of $p_{m}$ under the Kummer maps  
$$ A(E[m]) \to A(E[m]) \otimes   _ { \ZZ  }     \ZZ_{p} \to \mathrm{H}^{1}(E[m], \mathrm{T}_{p}(A)) $$  
with $\mathrm{Tr}_{\ell}$ replaced by corestriction. That is,  
\begin{equation} \label{normrelcoho} 
   a_{\ell}  c_{m}   = \mathrm{cores}^{E[m\ell]}_{E[m]}(c_{m\ell}) 
\end{equation}    
for all positive integers $m$ and inert primes $\ell$ satisfying $(m, N) = (\ell, mN) = 1$. On the other hand, the   reverse     characteristic polynomial for the action of $\mathrm{Frob}_{\ell}^{-1}$ on the polarized  Galois representation $ \mathrm{T}_{p}(A)$ is  
\begin{equation} P_{\ell}(X) = 1 - a_{\ell} \ell^{-1} X + \ell^{-1} X^{2}   \label{PellX}   
\end{equation}    
It is possible to massage the classes $ c_{m} $ in such a way that $ c_{1}  $ remains unchanged and the Euler factor on the left-hand side of (\ref{normrelcoho}) becomes 
$$ P_{\ell}(\mathrm{Frob}_{\lambda}^{-1}) = 1 - a_{\ell} \ell^{-1} \mathrm{Frob}_{\lambda}^{-1} + \ell^{-1} \mathrm{Frob}_{\lambda}^{-2}, $$  
where $\mathrm{Frob}_{\lambda}^{-1}$ denotes a choice  of   geometric    Frobenius at the unique prime $\lambda$ of $E$ above $\ell  \neq p $. Notice that the Frobenius substitution at $ \lambda    $ is trivial in $\Gal(E[m]/E)$ for all $ m $ and inert $ \ell $ such that    $ \ell \nmid m $.   Thus the action of $P_{\ell}(\mathrm{Frob}_{\lambda}^{-1})$ on $\mathrm{H}^{1}(E[m], \mathrm{T}_{p}(A))$ coincides with multiplication by the scalar $P_{\ell}(1) = 1 - a_{\ell} \ell^{-1} + \ell^{-1}$. Since the degree of extension $E[m\ell]/E[m]$ is   $\ell + 1$, multiples of $\ell + 1$ in the $\ZZ_{p}$-module $\mathrm{H}^{1}(E[m], \mathrm{T}_{p}(A))$ are in the image of the corestriction map from level $E[m\ell]$.\footnote{In particular, the statement holds even in the case $p\mid (\ell+1)$, which is the case of primary interest.} Now observe that  
$$ (1 - a_{\ell} \ell^{-1} + \ell^{-1}) - a_{\ell} = \ell^{-1}(1+\ell)(1-a_{\ell}) $$  
is a $\ZZ_{p}$-multiple of $\ell + 1$.         Thus, if we define  
\begin{equation}  %\label{zellprimerelation} 
z_{\ell}   :=  c_{\ell} + \ell^{-1}(1-a_{\ell}) \, \mathrm{res}^{E[\ell]}_{E[1]}(c_{1}) \in \mathrm{H}^{1}(E[\ell], \mathrm{T}_{p}(A)) 
\end{equation}   
where $ \mathrm{res} $ denotes restriction,  we have   
\begin{align*}   P_{\ell}(\mathrm{Frob}_{\lambda}^{-1}) c_{1}  
& = (1 - a_{\ell} \ell^{-1} + \ell^{-1})  c_{1}  \\
& = a_{\ell} c_{1} + \ell^{-1}(1+\ell)(1-a_{\ell})c_{1}  \\
&   =  \mathrm{cores}^{E[\ell]}_{E[1]}(c_{\ell})  + \ell^{-1} (1-a_{\ell})   \,     \mathrm{cores}^{E[\ell]}_{E[1]}\left ( \mathrm{res}^{E[\ell]}_{E[1]}(c_{1}) \right ) \\
& = \mathrm{cores}^{E[\ell]}_{E[1]}(z_{\ell}) .  
\end{align*}   
More generally, for square-free $m$ relatively prime to $pN$,  we can define  
$$ z_{m}  := \sum_{n \mid m} \Big( \prod_{\ell \mid \sfrac{m}{n}} \ell^{-1}(1 - a_{\ell}) \Big)  \mathrm{res}^{E[m]}_{E[n]}(c_{n}   )   \in  \mathrm{H}^{1}(E[m], \mathrm{T}_{p}(A)) $$  
where the sum is over all divisors of $m$ and the product is over all prime divisors of $m/n$. Then $z_{1}  =  c_{1}$ and  
\begin{equation} \label{correctESHeeg} 
   P_{\ell}(\mathrm{Frob}_{\lambda}^{-1})  z_{m}   = \mathrm{cores}^{E[m\ell]}_{E[m]}(z_{m\ell}   ), 
\end{equation} 
for all inert primes $\ell $ that do not divide $  mN p   $. The norm relations  (\ref{correctESHeeg}) are then closer in spirit to the ones required in (\ref{ESdef}). See \cite[\S IX.6]{Rubin} for a similar ``massaging" trick for general Euler systems.    
\begin{remark}   \label{ESfactorremark}        
The original definition suggested by Kolyvagin in \cite[p.448]{KolyvaginES} (axiom AX1) insists on using $P_{\ell}(\mathrm{Frob}_{\lambda}^{-1})$ as the Euler factor for  norm    relations, and the bounding arguments go through with this choice. Note that we cannot literally use $ P_{\ell}(\mathrm{Frob}_{\ell}^{-1})$ as in (\ref{ESdef}), since the conjugacy class of $ \mathrm{Fr}_{\ell}^{-1} \in \Gal(E[m]/\QQ) $ can be that of complex conjugation (which is not a singleton  if $ \Gal(E[m]/E) $ is not 2-torsion) and elements of this class may have differing actions on $ \mathrm{H}^{1}(E[m], \mathrm{T}_{p}(A))$.   

On the other hand, if we only consider $ T = \mathrm{T}_{p}(A)$ as a $\Gal(\overline{\QQ}/E)$-representation, then it is more appropriate to use $P_{\lambda}(\mathrm{Frob}_{\lambda}^{-1})$, where   
\begin{equation}   \label{PlambdaX}   
\begin{split}
   P_{\lambda}(X) 
   &= \det(1 - \mathrm{Frob}_{\lambda}^{-1} X \, | \, \mathrm{T}_{p}(A)) \\ 
   &= 1 - (\ell^{-2} a_{\ell}^{2} - 2\ell^{-1}) X + \ell^{-2} X^{2}, 
\end{split}
\end{equation} 
is the reverse characteristic polynomial of $\mathrm{Frob}_{\lambda}^{-1}$  acting  on $ T  \simeq  T^{\vee}(1)$. Let $ c \in \Gal(\overline{\QQ}/\QQ) $ denote the complex conjugation and let $ T^{c} $ denote the representation of $ \mathrm{Gal}(\overline{\QQ}/E) $ on which $ \gamma \in \Gal(\overline{\QQ}/E) $ acts as $ c \gamma c^{-1}$. Then $ T^{c} \simeq T $ (complex conjugation provides an isomorphism) and therefore $$ T^{c} \simeq T^{\vee}(1) $$ as $ \mathrm{Gal}(\overline{\QQ}/E) $-representations. Such representations of $ \Gal(\overline{\QQ}/E) $ are often referred to as \emph{conjugate self-dual} in literature.     In many recent works, anticyclotomic Euler systems have been constructed for conjugate self-dual Galois representations of $\Gal(\overline{\QQ}/E)$ which may or may not descend to representations of $\Gal(\overline{\QQ}/\QQ)$. These works thus only use Euler factors over $E$. See \S \ref{ESoverE} for an analogue of (\ref{correctESHeeg}) that involves $P_{\lambda}(X)$. 
\end{remark} 
\begin{remark} 
Kolyvagin’s formulation in \cite{kolyvagin} also imposed a “congruence condition” (axiom AX3), but this can be replaced by the vertical norm relation requirement in the definition above \cite[Remark~II.1.5]{Rubin}. We refer the reader to \cite{loe} for a general machinery for establishing vertical norm relations that leverages  the theory of spherical varieties.
\end{remark} 

While the relation (\ref{kolyrelation}) suffices for Kolyvagin’s bounding argument, its form is not particularly representative of the    situation encountered in the setting of higher dimensional Shimura varieties.   In general, automorphic $L$-factors are computed via   the action of more than one Hecke operator. In fact, the totality of the operators required is packaged into what is known as a \emph{Hecke polynomial}. In the situation of modular curves, $T_{\ell}$ is the middle coefficient of a degree-two Hecke polynomial whose coefficients retrieve those of $P_{\ell}(X)$ (\ref{PellX}) as eigenvalues under the Hecke action on the eigenform associated with the elliptic curve $A$. In Kolyvagin’s case it suffices to work with $T_{\ell}$ alone, since the action of $P_{\ell}(\mathrm{Frob}_{\lambda}^{-1})$ corresponds to multiplication by $a_{\ell}$ modulo $\ell+1$, and, as explained above, one can derive the ``correct'' relations (\ref{correctESHeeg}) from the simplified relations (\ref{normrelcoho}). However, such simplifications do not exist for general automorphic Galois representations,  and one must establish the horizontal norm relations with the full Euler factor as,  for instance, required in (\ref{ESdef}).   

Accordingly, a more natural version of the Hecke-operator-valued norm relation (\ref{kolyrelation}) would involve the complete Hecke polynomial that directly specializes to (\ref{correctESHeeg}) and that also holds at primes $\ell$ which are \emph{split} in $E$. Indeed, Jetchev, Nekovář, and Skinner \cite{JNS} have proposed a framework in which only split relations are required to carry out Kolyvagin’s bounding argument. Their approach also  has the advantage of being applicable to conjugate self-dual Galois representations of $\Gal(\overline{\QQ}/E)$ that do not necessarily descend to representations of $\Gal(\overline{\QQ}/\QQ)$. Several examples of such “split” Euler systems have already been constructed (\cite{Anticyclo}, \cite{CRR}, \cite{SkinnerLai}, \cite{disegni}) and have been used to make significant progress towards the Bloch--Kato conjecture in a variety of settings. % \\[-1.5em] 

\subsection{Aims of this article}   \label{aimssection}      In this work, we revisit the setup of Heegner points (and more generally, CM points) on modular curves and establish horizontal norm relations with the full Hecke polynomial at all but finitely many  primes in the anticyclotomic tower of $E$ (see Theorem \ref{CMnormtheorem}). No prior knowledge of such relations is assumed, and the main arguments rely only on the combinatorics of two-dimensional lattices over local fields. In particular, we do not invoke the modular interpretation of these points, as it does not generalize to higher dimensional cycles. At inert primes, our relations can also be derived by a straightforward recasting of (\ref{kolyrelation}), though it is perhaps less immediate  at split primes. The latter case, however, offers a better view of the intricacy  of such relations for special cycles on general Shimura varieties.  

Another aim of this article is to reformulate the aforementioned norm relations in the language of adeles and smooth representation theory, which allows us to reduce the problem of   establishing horizontal norm relations  to  constructing  certain “integral test data’’ in purely local Schwartz spaces. This reformulation has played a key role in the construction of several new Euler systems, most notably in \cite{LSZ}, where such test data were first constructed in the setting of Siegel modular threefolds using \emph{local zeta integrals}.   Since many classical sources on Euler systems of Heegner points work in a non-adelic framework, we begin with a detailed review of the theory of modular curves and make an explicit translation between the classical and adelic languages. This also serves to address certain sign discrepancies   that arise from different choices of conventions, and provides an additional check on which conventions are mutually compatible.  We then proceed to establish the horizontal norm relations in a purely representation-theoretic setting. For comparison,  we   also  study these  local relations via the method of local zeta integrals developed in \cite{LSZ}, specialized here to the case of split primes.  

It should be noted, however, that the method of local zeta integrals relies crucially on the so-called \emph{multiplicity one hypothesis} for the associated period integrals, which does not hold in all situations of interest. More precisely, some automorphic $L$-functions can be represented by period integrals that admit motivic interpretations but unfold to so-called \emph{non-unique} models \cite{PollackShahGsp6}, \cite[p.~1798]{OWR}. To handle these situations,   an alternative approach to constructing the integral test data via Hecke polynomials was proposed by the author in \cite{CZE}. This method overcomes the failure of the aforementioned hypothesis and has been successfully used to construct Euler systems in the settings of Siegel modular sixfolds \cite{Siegel1} and certain unitary Shimura varieties of signature $(2,2)$ \cite{EulerGU22}, both of which lie outside the reach of the method of local zeta integrals. A third aim of this article is to elaborate on this more general method, with the hope of making the aforementioned works more accessible.

More recently, a promising connection between horizontal norm relations and the theory of spherical varieties has been explored in \cite{CaiFanLai}, although certain integrality issues currently limit the applicability of the   main ideas.  Via the examples of \S \ref{examplessec}, we  also aim to highlight certain congruence properties of the degrees of Hecke polynomials (and of their  twisted restrictions) that appear to underlie these norm relations, with the hope of stimulating further research in this direction.

\subsection{Outline} 
This article is divided into four sections. In \S \ref{modcurvesec}, we review the adelic theory of  modular curves.  In \S \ref{Eulersystemsec}, we establish the horizontal norm relations by introducing certain judiciously chosen elements in a space of Schwartz functions, whose elements parametrize divisors of CM points on modular curves.   In \S \ref{integraltestdatasec}, we formally define the notion of integral test data and elaborate on the methods of  \cite{CZE} and  \cite{LSZ}.      Finally in \S \ref{examplessec}, we reprove our norm relations at split primes using the both  methods.  An additional example  involving  $ P_{\lambda}(X) $ is also  included to illustrate the broader applicability of the method of \cite{CZE}.    

\subsection{Acknowledgements} This article is based on the author’s thesis work carried out at Harvard University. It was originally inspired by   a    combinatorial relationship between the test data constructed in \cite[\S 7]{Anticyclo} and the coefficients of the standard Hecke polynomial of $\GL_{n}$. The author is deeply grateful to Barry Mazur for his encouragement and for his careful reading of earlier drafts, and to Lillian Pierce for her insightful feedback on improving the exposition. While working through various sign–convention issues, the author benefited greatly from discussions with   Christophe Cornut, Andrew Graham, and Antonio Cauchi, and is grateful to them for their valuable insights. The author also wishes to thank the referees for their diligent reading of this paper and for their numerous helpful comments, which substantially improved the writing and broadened the scope of this work.    
\section{Modular curves}  \label{modcurvesec}  
In this section, we review the theory of modular curves in the spirit of \cite{DeligneTS}. Our primary goal is to present, in a simple setting, the terminology that appears in the study of higher-dimensional Shimura varieties. Although the material here goes beyond what is strictly required for establishing the norm relations in \S\ref{Eulersystemsec}, we include it to provide a fuller picture of the relationship between the adelic and classical descriptions of modular curves and to illustrate how one may translate between these two viewpoints.   This also serves as an  additional   check on our conventions and helps settle certain doubts regarding the definition of Hecke polynomials originally raised by Jan Nekovář in \cite{Nekovar}. In addition, since the literature employs two different Shimura data for $\GL_{2}$, we include a comparison of these choices throughout the section in the form of remarks and highlight how the associated conventions must be adjusted when translating statements  between them.

Throughout,  we let $\overline{\QQ} $  denote the algebraic closure of $\QQ$ in the field of complex numbers $ \mathbb{C}$. We fix $i \in \CC$ to be  choice of  a  root of $x^{2}+1 \in \mathbb{R}[x]$.   For a ring $ R $, we identify   $ R^{2} $ with $ \mathrm{Mat}_{2 \times 1}(R)  $ via $ (r_{1}, r_{2}) \mapsto \left ( \begin{smallmatrix} r_{1} \\ r_{2} \end{smallmatrix}  \right ) $  and let $ \GL_{2}(R) $ act on the left   of $ R^{2} \simeq \mathrm{Mat}_{2 \times 1}(R) $ via left  matrix multiplication.  For $ g \in \GL_{2}(R) $, we will denote by $ {}^{t}\negsmall g $ the transpose of $ g $. If $ H $ is a subgroup of $ \GL_{2}(R)$, we will let $   {}^{t}   \negsmall H    $ denote the group obtained by taking transposes of elements of   $ H   $.     If $ (e_{i}) $, $ (f_{j}) $ are two ordered basis for a free module $R$-module $M$ of finite rank,   the change of coordinates matrix from $ (e_{i}) $ to $ (f_{j}) $  is matrix of the identity map $ M \to M $ where the domain has basis $ (e_{i}) $ and the target has basis $ (f_{j}) $.  
\subsection{Shimura data} 
\label{SDdatumsec} 
The modular curves arise from what is known as a \emph{Shimura datum} for $\GL_{2,\QQ}$. For the sake of completeness, we first recall the general definition given in \cite[\S 2]{DeligneVar}   and  \cite[\S 5]{Milne}.

Let $\Gb$ be any connected reductive algebraic group over $\QQ$, and let $\delT$ denote the \emph{Deligne torus} $\mathrm{Res}_{\CC / \RR} \GG_{m}$, where `$\mathrm{Res}$' denotes Weil restriction of scalars. Recall \cite[\S 2]{Milne} that an algebraic representation of $\delT$ on a real vector space $V$ gives a Hodge structure on $V$, where the bigraded piece $V^{p,q}$ of the complexification $V_{\CC} := \CC \otimes_{\RR} V$ is the subspace
\begin{equation}   
V^{p,q} = \{ v \in V \otimes_{\RR} \CC \mid h(z)v = z^{-p}\bar{z}^{-q} v \text{ for all } z \in \mathbb{C}^{\times}\}   \label{Deligneconvention}
\end{equation}   
Thus a morphism $h : \delT \to \Gb_{\RR}$ determines a Hodge structure on the Lie algebra $\mathrm{Lie}(\Gb_{\RR})$ via the adjoint representation. The $\Gb(\RR)$-conjugacy class of $h$ is defined to be the set of all conjugated morphisms 
$
\{ g h g^{-1} \mid g \in \Gb(\RR) \}   $ where $ (g h g^{-1})(z)  :   = g h(z) g^{-1} $.
\begin{remark} The normalization for the Hodge bigrading used in (\ref{Deligneconvention}) is due to Deligne, and differs from the one used in Hodge theory. See \cite[Remarque 1.1.6]{DeligneVar} for a justification of this choice.
\end{remark} 
Let $\mathcal{X}$ be an arbitrary $\Gb(\RR)$-conjugacy class of homomorphisms $\delT \to \Gb_{\RR}$. We say that $(\Gb, \mathcal{X})$ is a \emph{Shimura datum} if for all $h \in \mathcal{X}$,
\begin{itemize} 
\item[(SV1)] the Hodge bigrading of the complex vector space $\mathrm{Lie}(\Gb)_{\CC}$ induced by the adjoint action of $\delT$ via $ h $  is contained in $\{ (-1,1), (0,0), (1,-1) \}$,  
\item[(SV2)] $\mathrm{ad}(h(i))$ is a Cartan involution of the derived group $\Gb^{\mathrm{der}}(\RR)$, i.e., the real Lie group
$$
\{ g \in \Gb^{\mathrm{der}}(\CC) \mid h(i)\bar{g}h(-i) = g \}
$$
is compact,  and   
\item[(SV3)] the adjoint group $\Gb^{\mathrm{ad}}$ has no $\QQ$-factor on which the projection of $h$ is trivial. 
\end{itemize}  
It is easy to see that these axioms hold for all elements in $\mathcal{X}$ if they do for a single $h \in \mathcal{X}$. A \emph{morphism} $  (\Gb',  \mathcal{X}')  \to  (\Gb, \mathcal{X}) $ of Shimura data is a morphism $ f :  \Gb' \to \Gb $ of algebraic groups over $ \QQ $   such that $ f_{\RR}  \circ  h'     \in  \mathcal{X} $ for any $ h ' \in  \mathcal{X}' $.     An  \emph{isomorphism} of Shimura data is a morphism such that  the  map   on algebraic groups is an isomorphism.       

Henceforth, we let $ \Gb $ denote the algebraic group $ \GL_{2, \QQ}   $.         
Let $ \mathcal{X}_{\mathrm{std}} $ denote the $ \Gb(\RR) $-conjugacy class of the homomorphism
\begin{equation}   \label{SDdatum}      
  h_{\mathrm{std}} :  \mathbb{S}  \to  \Gb   _  {   \RR    }       \quad  \quad  
  z =  a + b  i    \mapsto    
  \left ( \begin{smallmatrix}  a  &  b   \\  -  b  &   \,  \,     a  \end{smallmatrix}  \right  ) .
\end{equation}  
Then $ \mathcal{X}_{\mathrm{std}}  $ constitutes a Shimura datum  for $ \Gb $. This is \cite[Example 5.6]{Milne}, but we elaborate on some details. Axiom (SV1) is satisfied since  
$$ 
\mathfrak{gl}_{2, \CC}  
=   
\left\langle   
  \left( \begin{smallmatrix} -1 & i \\ i & 1 \end{smallmatrix} \right)   
\right\rangle  
\oplus  
\left\langle  
  \left( \begin{smallmatrix} 1 & 0 \\ 0 & 1 \end{smallmatrix} \right),   
  \left( \begin{smallmatrix} 0 & 1 \\ -1 & 0 \end{smallmatrix} \right)   
\right\rangle  
\oplus  
\left\langle    
  \left( \begin{smallmatrix} -  1 & -i \\ - i &   1 \end{smallmatrix} \right)   
\right\rangle .   
$$ 
is the desired Hodge decomposition. Since $ \Gb^{\mathrm{der}} = \SL_{2,\QQ} $, and  
$ 
g = \left( \begin{smallmatrix} a & b \\ c & d \end{smallmatrix} \right)  \in \SL_{2}(\CC) 
$  
satisfies $ h(i) \bar{g} h(-i) = g $ if and only if $ a = \bar{d}, \, b = -\bar{c} $, the Lie group defined by the involution $ \mathrm{ad}(h(i))$ is identified with real 3-sphere $ S^{3} $, so axiom (SV2) is verified. Finally, since $ \mathrm{PGL}_{2,\QQ} $ is simple and $ h_{\mathrm{std}} $ does not factor through the center of $ \Gb_{\RR} $, axiom (SV3) holds as well.

A consequence of the   axioms (SV1) and (SV2)   is that $ \mathcal{X}_{\mathrm{std}} $ has a natural structure of a complex Riemannian manifold \cite[\S 2.1]{DeligneVar},   \cite[Proposition 5.9]{Milne}.         Let  $ \mathbf{Z} $  denote the center of $ \Gb $. It is easy to see that  the    centralizer $ K_{\infty} $ in $ \Gb(\RR) $ of $h_{\mathrm{std}}(i) $  is the image $$    h_{\mathrm{std}}(\CC^{\times}) = \mathbf{Z}(\RR) \mathrm{SO}_{2}(\RR)   .   $$     Since $ K_{\infty} $ is abelian,  the  stabilizer of $ h_{\mathrm{std}} \in \mathcal{X}_{\mathrm{std}}  $ under the conjugacy action of $ \Gb(\RR) $  is  also  $  K_{\infty}  $.   Consequently,   we can identify $\mathcal{X}_{\mathrm{std}}$ with $\Gb(\RR)/ K_{\infty}  $ via $g \, \negsmall  h_{\mathrm{std}} \,   \negsmall     g^{-1} \mapsto [g]$ and furthermore, with the set of all complex structures $$ \mathrm{CS}(\RR^{2})   :     = \left \{ J \in \Gb(\RR) \, | \, J^{2} = -1 \right \} $$ on $ \RR^{2} $ via $ g \,   \negsmall  h_{\mathrm{std}} \,   \negsmall     g^{-1}  \mapsto  g h_{\mathrm{std}}(i) g^{-1} $. We can also identify these sets with $ \mathcal{H}^{\pm} := \CC \setminus  \RR   $   via     % via  $\mathcal{H}^{\pm} := \CC \setminus \RR$ via  
\begin{align*} 
\Gb(\RR)/K_{\infty} &\longrightarrow \mathcal{H}^{\pm}  \\ 
\left( \begin{smallmatrix} a & b \\ c & d \end{smallmatrix} \right) &\longmapsto \tfrac{ai + b}{ci + d} . 
\end{align*} 
and the resulting identification  of   $ \mathcal{X}_{\mathrm{std}} $ with $ \mathcal{H}^{\pm} $ respects the complex and Riemannian manifold structures. 
%Note that the first isomorphism is independent of the choice of the  ``base-point" $ h_{\mathrm{std}}  \in  \mathcal{X}_{\mathrm{std}}$ since it is given by $ h \mapsto h(i) $ for $ h \in \mathcal{X}_{\mathrm{std}}$.        
The left action of $\Gb(\RR)$ on $\mathcal{X}_{\mathrm{std}}$ (via conjugation) is then identified with left multiplication on $\Gb(\RR)/K_{\infty} $, with conjugation on $ \mathrm{CS}(\RR^{2}) $  and with Möbius transformations on $\mathcal{H}^{\pm}$, as defined in \cite[\S 1.2]{Shimurabook}.   The   following  diagram   summarizes the various  identifications.   
\begin{equation}   \label{variousiden}       
\begin{array}{cccccl}
\mathcal{X}_{\mathrm{std}}
 & \xrightarrow{\;\sim\;} &
 \mathrm{CS}(\RR^{2})
 & \xrightarrow{\;\sim\;} &
 \Gb(\RR)/  K_{\infty}    
 & \xrightarrow{\;\sim\;}   \, \mathcal{H}^{\pm} \\
[0.3em]
gh_{\mathrm{std}}g^{-1}
 & \longmapsto&
 gh_{\mathrm{std}}(i)g^{-1}
 & \longmapsto&
 g  K _ { \infty  }    
 &    \longmapsto \, \, \,  g\cdot i.
\end{array}
\end{equation}  
The choice of $i \in \CC$ made above allows us to designate the “upper” half-plane $\mathcal{H}^{+} \subset \mathcal{H}^{\pm}$ as the connected component of $ \mathcal{H}^{\pm} $  containing $ i $. Then $ g K _{\infty}  \in \Gb(\RR)/K_{\infty}    $ corresponds to a point in $\mathcal{H}^{+}$ if and only if the determinant $\det(g)$ is positive. Similarly, $ \mathcal{H}^{+}$  corresponds to the subset $  \mathcal{X}_{\mathrm{std}}^{+} \subset \mathcal{X}_{\mathrm{std}} $ of  conjugates of $ h_{\mathrm{std}} $ by $ \Gb(\RR)^{+} :=  \left \{ g   \in    \Gb(\RR) \, | \,  \det(g) > 0 \right \}$.

\begin{remark}   \label{invremark} Note that the first isomorphism  in      (\ref{variousiden}) depends only on the datum $ (\Gb , \mathcal{X}_{\mathrm{std}}) $,  since it can also be given by   the  evaluation   map     $ h \mapsto h(i) $,     $ h \in \mathcal{X}_{\mathrm{std}}$.          The remaining  identifications however are strictly speaking   \emph{not}     determined by the Shimura  datum and involves   additional choices.   For instance, the map  
\begin{equation}   \label{invmap}    \mathrm{inv} : \mathcal{H}^{\pm} \to \mathcal{H}^{\pm},  \quad     \tau \mapsto -1/\tau   
\end{equation}    is a holomorphic and isometric involution of $ \mathcal{H}^{\pm} $ that preserves $ i $ and the two connected components of $  \mathcal{H}  ^{\pm} $.
If we instead use the   identification  $$ \mathcal{X}_{\mathrm{std}} \rightarrow  \mathcal{H}^{\pm}  ,   \quad  \quad   
g h_{\mathrm{std}}  g^{-1}      \mapsto   \mathrm{inv}( g \cdot i  )  ,   $$    
then the conjugation action of $ \gamma  \in \Gb(\RR) $ on $ \mathcal{X}_{\mathrm{std}} $ is identified with the (left) action of $ {}^{t} \negsmall    \gamma^{-1} $ on $ \mathcal{H}^{\pm} $ (where ${}^{t}   \negsmall  \gamma^{-1} $ acts via usual Möbius transformations). In what follows, we will only use the identifications made in (\ref{variousiden}),  but   in order to keep our discussion intrinsic to the datum $ (\Gb,  \mathcal{X}_{\mathrm{std}})   $,  we will always    distinguish between  the  elements of $ \mathcal{X}_{\mathrm{std}} $ and  those of $ \mathcal{H}  ^  {   \pm     }   $.

A related observation is that the $ \Gb(\RR) $-conjugacy class $ \mathcal{X}_{\mathrm{std}}' $ of the map   
\begin{equation}   \label{weirdShimura}     h_{\mathrm{std}}' : \delT \to  \Gb     ,   \quad   \quad    z \mapsto {}^{t}    \negsmall ( h_{\mathrm{std}}(z))^{-1}   
\end{equation}    also gives a   Shimura  datum.\footnote{We may also define $ \mathcal{X}_{\mathrm{std}}' $ as the conjugacy class of $ z \mapsto h_{\mathrm{std}}(z)^{-1}$.}     We have  an isomorphism   
\begin{equation}  \label{weirdshimuraiso} \phi : (\Gb, \mathcal{X}_{\mathrm{std}})  \to  (\Gb, \mathcal{X}_{\mathrm{std}}')   
\end{equation} 
of Shimura data   induced by the map $    \Gb  \to  \Gb $, $ g  \mapsto  {}^{t}  \negsmall    g^{-1} $,   which induces the   holomorphic and   isometric  identification $  \mathcal{X}_{\mathrm{std}}   \xrightarrow{\sim}     \mathcal{X}_{\mathrm{std}}' $,  $     h  \mapsto  (z \mapsto  {}^{t}   \negsmall (h(z))^{-1}) $.   
This identification fits   into the commutative diagram 
\begin{equation}   \label{Xstd'compardiag}     
\begin{tikzcd}[sep = large]  \mathcal{X}_{\mathrm{std}}   \arrow[r, "\phi"]   \arrow[d]     &       \mathcal{X}_{\mathrm{std}}'    \arrow[d]         \\
\mathcal{H}^{\pm}       \arrow[r,  "{\mathrm{inv}}"]     &  \mathcal{H}^{\pm}  
\end{tikzcd}  
\end{equation}    
where the left  vertical map is the one in 
(\ref{variousiden}) and the right vertical map is (the holomorphic and isometric  isomorphism)    $$  \mathcal{X}_{\mathrm{std}}'  \ni  g   h_{\mathrm{std}}' g^{-1}    \mapsto   g \cdot i   \in   \mathcal{H}^{\pm}   ,   $$    where $ g \cdot i $     again    denotes the usual Möbius transformation.  The data (\ref{SDdatum})   and (\ref{weirdShimura})   give rise to isomorphic theories, and one may translate between them using the isomorphism (\ref{weirdshimuraiso}). However, we will carry out this translation explicitly at various junctures, since the datum $(\Gb, \mathcal{X}_{\mathrm{std}}')$ is used in parts of the literature (e.g., \cite[\S 5]{LSZ}, \cite{Carayol}), and this can be a potential source of confusion when citing results from sources that adopt different conventions.  We also refer the reader to \cite[\S 3.3]{CorVatsal2}, which      discusses the relation between these  two  data  at length. The reader should keep in mind however that the identification $ \mathcal{X}_{\mathrm{std}} \simeq \mathcal{X}_{\mathrm{std}}'$   used in  \emph{loc.~cit.} is \emph{anti-holomorphic} and in particular, not induced by the morphism $ \phi   $.    
\end{remark} 

\begin{remark} For general Shimura data $(\Gb', \mathcal{X})$, the conjugacy class $ \mathcal{X} $ can be endowed with a complex manifold  structure in such a way that makes each connected component of $ \mathcal{X} $ a \emph{Hermitian symmetric domain}. See   \cite[Proposition 5.9]{Milne}. 
\end{remark} 

To define certain algebraic points on modular curves, we need to introduce another Shimura datum. Let $E \subset \CC$ denote an imaginary quadratic field, and set $\mathbf{H} = \mathrm{Res}_{E/\QQ} \GG_{m}$. Fix an abstract isomorphism \begin{equation}  \label{varphi} \varphi : E \to \QQ^{2} 
\end{equation}  of $\QQ$-vector spaces, or equivalently, a choice of an ordered basis $(\omega_{1}, \omega_{2})    \in E \times E$ over $\QQ$.\footnote{Here,    $  \varphi(\omega_{1}) = (1, 0)  $ and $ \varphi(\omega_{2}) =  (0, 1) $.}  Given $  \omega  \in E$, multiplication by $\omega$ induces a $\QQ$-algebra endomorphism of $E$. In other words, the choice of $\varphi$ induces an inclusion $\iota : E \hookrightarrow \mathrm{Mat}_{2 \times 2}(\QQ)$ of $\QQ$-algebras and hence an embedding of algebraic groups  
\begin{equation} \label{iotaembedding} 
   \iota : \mathbf{H} \hookrightarrow \mathbf{G} 
\end{equation}  
over $\QQ$, whose $\Gb(\QQ)$-conjugacy class is independent of $\varphi$. Since $ E \subset \CC $, we have a natural identification     $ \CC  \simeq    E \otimes_{\QQ}  \RR $ of $\RR$-algebras, which induces  an    isomorphism $h_{0} : \delT \xrightarrow{\sim} \mathbf{H}_{\RR}$. The pair $(\mathbf{H}, \{h_{0}\})$ is then obviously a Shimura datum. Moreover, the mapping  
\begin{equation} \label{embedding} 
   \iota : (\Hb, \{h_{0}\}) \hookrightarrow (\mathbf{G}, \mathcal{X}_{\mathrm{std}})  
\end{equation}  
constitutes an (injective) morphism of Shimura data. This amounts to the claim  that the composition $\iota_{\RR} \circ h_{0}$ belongs to $\mathcal{X}_{\mathrm{std}}$, i.e.,  
\begin{equation}  \label{g0}    \iota_{\RR} \circ h_{0} = g_{0} \, \negsmall  h_{\mathrm{std}} \,  \negsmall    g_{0}^{-1}   
\end{equation}    for some $g_{0} \in \Gb(\RR)$. To check this, note that for each $z \in \CC$, multiplication by $z$ on $\CC$ is $\RR$-linear and $h_{\mathrm{std}}(z)$ is just the matrix of this transformation with respect to the ordered $ \mathbb{R}$-basis $(1,-i)$.\footnote{We can also use $ (i, 1 )$  as a  basis here here but the moduli description  we  give  later  on    is easier to state if the ordered basis associated to $ \tau \in \mathcal{H}^{\pm} $ is $ (1 , - \tau ) $. See  Remark  \ref{thebasisremark}.}     Similarly, $\iota_{\RR}(h_{0}(z))$ is the matrix of multiplication by $z$ with respect to the ordered $\RR$-basis $(1, \omega_{2}/\omega_{1} )$ of $\CC \simeq E \otimes_{\QQ} \RR$. The matrix $g_{0}$ can therefore be taken to be the change of coordinates matrix from $(1,-i)$ to $(1, \omega_{2}/\omega_{1} )$. One easily  checks  that  $ g_{0} \cdot i = - \omega_{2} /\omega_{1}  $, so  that 
\begin{equation}   \label{tau0}   \tau_{0} :=    -\omega_{2}/\omega_{1}  \in  \mathcal{H}^{\pm}   
\end{equation}    
is the point  corresponding to $ h_{0} \in  \mathcal{X}_{\mathrm{std}} $    under  (\ref{variousiden}).      

\begin{remark} \label{orient}  
Note that the point $h_{0}$ does not necessarily map to $h_{\mathrm{std}}$, since the choice $(\omega_{1}, \omega_{2})$ is arbitrary. In fact, $\iota_{\RR} \circ  h_{\mathrm{0}}$ belongs to the $\Gb(\QQ)$-conjugacy class of $h_{\mathrm{std}}$ if and only if $E = \QQ(i)$. It is also clear that $\iota _ { \RR } \circ h_{0}$ lies in $\mathcal{X}^{+}_{\mathrm{std}}$ if and only if $(1 , \omega_{2}/\omega_{1})$ is positively oriented with respect to $(1,-i)$. 
\end{remark}

From now on, we view $ \Hb $ as a subgroup of $ \Gb $  via $ \iota $, so that $ \Hb(R) \subset \Gb(R) $ for any $ \QQ$-algebra $ R $, and we regard $ h_{0} $ as an element of $ \mathcal{X}_{\mathrm{std}} $. 
For $ h \in \mathcal{X}_{\mathrm{std}} $, the \emph{complex conjugate} of $ h $ is the map $$ \bar{h} :  \delT \to  \Gb_{\RR}   ,   \quad   z \mapsto h(\bar{z})  .   $$    
If  $ \delta $ denotes $  \mathrm{diag}(1, -1)  \in \Gb(\RR) $ and $ h = g h_{\mathrm{std}} g^{-1} \in \mathcal{X}_{\mathrm{std}}$,  then $ \bar{h} = g \delta h_{\mathrm{std}} \delta^{-1} g^{-1}   $  also     lies in $ \mathcal{X}_{\mathrm{std}} $.   Under  the   identification made in    (\ref{variousiden}),  the    operation $ h \mapsto \bar{h} $ corresponds  to     complex conjugation on $ \mathcal{H}^{\pm}  $.     

\begin{lemma}   \label{uniqueCMpoint}    The only points of $\mathcal{X}_{\mathrm{std}}$ whose stabilizer in $\Gb(\QQ)$ is $\iota(E^{\times})$ are $h_{0}$ and $\bar{h}_{0}$.
\end{lemma} 
\begin{proof} Let us first show  that $ h_{\mathrm{std}}  $, $ \bar{h}_{\mathrm{std}} $ are the only two points in $ \mathcal{X}_{\mathrm{std}}  $ whose stabilizer in $ \Gb(\RR) $  is $ K_{\infty} $.  So suppose that $ K_{\infty} $ is the stabilizer of $   g h_{\mathrm{std}} g^{-1} $ for some $ g \in \Gb(\RR) $.    Then $K_{\infty}  =   g K_{\infty}   g^{-1}  $ and in particular,  $  g h_{\mathrm{std}}(i) g ^{-1}   \in  K_{\infty}.  $    From this, one can see by an explicit   matrix    calculation that $ g  \in  K_{\infty}   \cup   \delta  K_{\infty}   $.\footnote{Alternatively, note that since $ K_{\infty} $ is a maximal torus (or Cartan subgroup) in $ \Gb(\RR) $, the quotient of the normalizer $ N_{\Gb(\RR)}(K_{\infty}) $ by $ K_{\infty} $ is the Weyl group $W$ of $ \Gb(\RR)$ and $ \delta \in \Gb(\RR) $ is a representative for the non-trivial element in $W$.}      

Now let  $ g_{0} $ be as in  (\ref{g0}). 
Since $ \delta $ normalizes $ K_{\infty}   $, the conjugate $  \Hb  (  \RR  )   =    g_{0} K_{\infty} g_{0}^{-1} $ is the stabilizer in $ \Gb(\RR) $ for both $ h_{0}$ and $ \bar{h}_{0}   =  g_{0} \delta h_{\mathrm{std}} \delta^{-1} g_{0}^{-1} $.   
So the stabilizer in $ \Gb(\QQ) $ for each of them 
is  
$$ \Gb(\QQ) \cap  \Hb(\RR) = \Hb (\QQ) = \iota(E^{\times})  .  $$
If $ P   \in   \mathcal{X}_{\mathrm{std}}  $  is any other point with this property, then since $ E ^{\times} $ is dense $ \Hb(\RR) \simeq \CC^{\times} $, the stabilizer for $ P $ in  $ \Gb(\RR) $ would also be $ \Hb(\RR) $. The result  of the previous  paragraph easily implies that $   P   \in    \{  h_{0} ,  \bar{h}_{0}   \}   $. 
\end{proof} 

\subsection{Reflex fields}    
\label{reflexsec}    

Each Shimura datum $(\Gb', \mathcal{X})$ has an associated number field given as a subfield of $\CC$ that is called the \emph{reflex field} \cite[Definition 12.2]{Milne}, which is   defined as follows. The Deligne torus $\delT$ splits over $\CC$, i.e., $\delT_{\CC} \simeq \GG_{m,\CC} \times \GG_{m,\CC}$, and this isomorphism is uniquely determined by requiring that the inclusion
\[
\CC^{\times} = \delT(\RR) \hookrightarrow \delT(\CC) \simeq \CC^{\times} \times \CC^{\times}
\]
corresponds to
\[
z \longmapsto (z, \bar{z}).
\]
The reflex field of  $(\Gb', \mathcal{X})$ is defined to be the field of definition of the $\Gb'(\CC)$-conjugacy class of the   \emph{Hodge cocharacter}     
$$
\mu_{h} : \GG_{m,\CC} \to \Gb_{\CC}   ' 
$$ 
attached  to   any  $h \in \mathcal{X}$  by        restricting $h_{\CC} : \GG_{m} \times \GG_{m} \to \Gb_{\CC}'$ to the first component.  In practice, this    means that in the matrices $h(z)$ for $z \in \CC^{\times}$, one formally replaces $\bar{z}$ with $1$ and checks the smallest field over which an element in its conjugacy class can be defined. The reflex field is independent of the choice of $h$, as the $\Gb'(\CC)$-conjugacy class of $\mu_{h}$, denoted $\mu_{\mathcal{X}}$, is independent of $h \in \mathcal{X} $. 
 
Let us determine   these fields for the two Shimura data introduced in \S \ref{SDdatumsec}.    The cocharacter   
\begin{equation}   \label{muh0} 
\mu_{h_{0}} : \GG_{m} \to \Hb_{\CC} \simeq \GG_{m} \times \GG_{m}  ,   \quad \quad    z \mapsto (z,1)   
\end{equation}    associated with $h_{0}$ is defined over any field over which $\Hb$ splits and is clearly not defined over $ \QQ $, since $ \Gal(E/ \QQ) $ acts non-trivially on $ \mathbf{H}_{E}$.  So the  reflex field of $(\Hb, \{h_{0}\})$ is $ E $. For  $(\Gb, \mathcal{X}_{\mathrm{std}})$,  the reflex  field      is    $\QQ$. Indeed, the cocharacter 
$$
\mu_{h_{\mathrm{std}}} : \GG_{m,\CC} \to \Gb(\CC), \quad 
z \mapsto 
\begin{pmatrix} 
  \tfrac{z+1}{2} & \tfrac{z-1}{2i} \\[0.2em]     
  \tfrac{1-z}{2i} & \tfrac{z+1}{2} 
\end{pmatrix}
$$
when conjugated by 
$
\left( \begin{smallmatrix} i & 1 \\ -i & 1 \end{smallmatrix} \right),$ 
becomes 
\begin{equation} \label{GL2cocharacterconj}  
z \mapsto \left( \begin{smallmatrix} z & 0   \\[0.1em]     0 & 1 \end{smallmatrix} \right), 
\end{equation}  
which is itself defined over $\QQ$, and therefore so is the $\Gb(\CC)$-conjugacy class $\mu_{\mathcal{X}_{\mathrm{std}}}$ of $\mu_{h_{\mathrm{std}}}$. We denote the cocharacter (\ref{GL2cocharacterconj}) by $\mu_{\mathrm{std}}$.

\begin{remark}   \label{weirdhodgecocharacter}    
The  Hodge  cocharacter $ \mu_{\mathrm{std}}$ (or rather, its inverse)    is also used to define the Hecke polynomial alluded to in the introduction;     see  \S \ref{sanitycheck}. We  note for  later   that the $ \Gb(\CC)$-conjugacy class of $ \mu_{h_{\mathrm{std}}'}$ associated with  the   data     (\ref{weirdShimura}) equals the  conjugacy  class  of   $  \mu_{\mathrm{std}}^{-1} $.    
\end{remark}    
 
\subsection{Canonical models}   
\label{canonicalmodsec}       
Let $\Ab$, $\Ab_E$ denote the rings of adeles of $\QQ$ and $E$, respectively, and let $\Ab_f$, $\Ab_{E,f}$ denote their finite parts. For any algebraic  group  $ \Gb' $ over $ \QQ $, the adelic group   $ \Gb'(\Ab_{f}) $ is endowed with a natural topology inherited from the topology of $ \Ab_{f} $ that makes $ \Gb'(\Ab_{f}) $ a  \emph{locally profinite}   group   \cite{Weiltopology},           \cite{conradtopology}.\footnote{This resembles the process of topologizing $ \Ab_{f}^{\times}  =  \mathbb{G}_{m}(\Ab_{f}) $, whose topology is \emph{not} the subspace topology inherited from $ \Ab_{f}$.}  That is, $ \Gb   '    (\Ab_{f}) $  has a basis at identity given by subgroups    that are both compact (hence closed) and open  in $  \Gb'(\Ab_{f})    $. If $ K \subset \Gb'(\Ab_{f}) $ is a compact open  subgroup, then for all but  finitely  many  primes  $ \ell $, one can write $$ K = K_{\ell} K^{\ell} $$ where $ K^{\ell} $ is a subgroup of $ \Gb'(\Ab_{f}/\QQ_{\ell})$ and $K_{\ell}$ is the group of $ \ZZ_{\ell}$-points of a smooth reductive group scheme over $ \ZZ_{\ell}$ whose generic fiber is $ \Gb' $. If $ \ell $ is such a prime, we   say that $ K $ is \emph{unramified}   or   \emph{hyperspecial}     at   $ \ell $. Since $ K $ is open in $ \Gb'(\Ab_{f}) $,  the  quotient $ \Gb'(\Ab_{f})/K $ is discrete under the quotient topology inherited from $ \Gb'(\Ab_{f}) $.

With these general considerations in mind, let us denote by $U $  a compact open subgroup of $ \Hb(\Ab_{f}) = \Ab_{E}^{\times} $. Then the double coset     
$$ 
\mathcal{T}_{U}(\CC) := \Hb(\QQ) \backslash \Hb(\Ab_{f}) / U  = \Ab_{E, f}^{\times}/ (E^{\times} U) 
$$
is a finite (discrete) set that resembles the quotients    one sees in the adelic   formulation  of     class  field  theory.  Following Deligne,   we    can identify $ \mathcal{T}_{U}(\CC) $ with the $\CC$-points of an étale scheme over $\Spec E$ as follows. Let  
$ 
\mu_{h_{0}} : \GG_{m,E} \to \Hb_{E} 
$ 
be the cocharacter (\ref{muh0}) attached to $ h_{0}$.  The \emph{reciprocity law} for the Shimura datum $(\Hb,\{h_{0}\})$ is the morphism  
\begin{equation} \label{reciprocitylaw}        
r(\Hb,h_{0}) : \Res_{E/\QQ} \GG_{m} 
   \xrightarrow{\Res} 
   \Res_{E/\QQ}(\Hb_{E}) 
   \xrightarrow{\mathrm{Tr}} \Hb , 
\end{equation}    
where $ \mathrm{Res}  = \mathrm{Res}_{E/\QQ}(\mu_{h_{0}}) $ denotes restriction of scalars applied to $ \mu_{h_{0}} $ and $ \mathrm{Tr} = \mathrm{Tr}_{E/\QQ}$ is induced by the natural trace map $E \to \QQ$. Unwinding definitions,\footnote{We need to translate what the   trace    map looks like when we identify $\Res_{E/\QQ} \Hb_{E}$ with $\Res_{E/\QQ}(\GG_{m} \times \GG_{m})$, since the description of the map $ \mu_{h_{0}} :  \GG_{m} \to \Hb_{E} $ in  (\ref{muh0}) is given after identifying $ \Hb_{E} $ with $ \GG_{m, E } \times \GG_{m, E } $.}    this map is easily computed to be the identity map.   The Galois action of $\sigma \in \Gal(E^{\mathrm{ab}}/E)$ on $\mathcal{T}_{U}(\CC)$ is defined to be translation by $a_{f} \in \Ab_{E,f}^{\times}$ for any
$$
a = (a_{\infty}, a_{f}) \in \Ab_{E}^{\times}
$$
such that $a \mapsto \sigma$ under the \emph{Artin homomorphism}
\begin{equation} \label{CFTforE}
\Art_{E} : E^{\times} \backslash \Ab_{E}^{\times} \to \Gal(E^{\mathrm{ab}}/E),
\end{equation}
normalized in Deligne's convention, meaning that uniformizers are mapped to geometric Frobenii.  In other words,  the action of $\sigma   =    \mathrm{Art}_{E}(a)    $ on $\mathcal{T}_{U}(\CC)$ is via 
$$ 
[h_{f}] \mapsto [a_{f}h_{f}] \in \mathcal{T}_{U}(\CC). 
$$ 
This description of Galois action on $\mathcal{T}_{U}(\CC)$ determines an $E$-scheme that we denote by $\mathcal{T}_{U}$. In the language of \cite[Definition 3.13]{DeligneTS}, $\mathcal{T}_{U}$ constitutes the \emph{canonical model} for $\mathcal{T}_{U}(\CC)$.   

\begin{remark} \label{ArtinmapEremark}            
Since $E$ is imaginary, the infinite ideles $\CC^{\times} \hookrightarrow \Ab_{E}^{\times}$ are all in the kernel of the Artin map,  and  we  can   in   fact   view $\Art_{E}$ as an   isomorphism   
\begin{equation}   \label{CFTforEnoC}       
E^{\times} \backslash \Ab_{E,f}^{\times} = \Hb(\QQ) \backslash \Hb(\Ab_{f}) \xrightarrow{\;\sim\;} \Gal(E^{\mathrm{ab}}/E).   
\end{equation}    
See \cite[\S 2.1]{Lars} for more details.
\end{remark}
   
Let us now describe the corresponding objects for $(\Gb,\mathcal{X}_{\mathrm{std}})$. Let $K \subset \Gb(\Ab_{f})$ be a compact open subgroup, which we fix throughout the rest of this article.  We let $\Gb(\QQ)$ act diagonally on the left of $\mathcal{X}_{\mathrm{std}} \times \Gb(\Ab_{f})$ where $\Gb(\QQ)$ acts on $\mathcal{X}_{\mathrm{std}}$ via conjugation and on $\Gb(\Ab_{f})$ by left multiplication. We also let $K$ act on the right of $\mathcal{X}_{\mathrm{std}} \times \Gb(\Ab_{f})$ via right multiplication on the $\Gb(\Ab_{f})$-component and via  trivial action on $\mathcal{X}_{\mathrm{std}}$. Then the double coset  space   
\begin{equation}   \label{Shimuravar}  
\mathcal{S}_{K}(\CC) := \Gb(\QQ) \backslash (\mathcal{X}_{\mathrm{std}} \times \Gb(\Ab_{f})) / K 
\end{equation}    
is a    finite    disjoint union of (left)  quotients   of      $\mathcal{X}^{+}_{\mathrm{std}}   \simeq   \mathcal{H}^{\pm}        $ by certain subgroups of $\Gb(\QQ)^{+}  :  =  \Gb(\QQ) \cap  \Gb(\RR)^{+}$  \cite[Lemma 5.13]{Milne}. More precisely, we have an identification 
\begin{equation}  \label{shimuravardecom1}   
\begin{split}   \sqcup_{g}  \Gamma_{g} \backslash \mathcal{X}^{+}_{\mathrm{std}}    &    \MapsTo \mathcal{S}(K) (\CC)   \\
\Gamma_{g} x    &     \mapsto [x,   g]_{K}   
\end{split}
\end{equation}    
where $ g  \in \Gb(\Ab_{f}) $ runs over a set of representatives of the finite set $ \Gb(\QQ)^{+} \backslash \Gb(\Ab_{f})/ K $ and $ \Gamma_{g} $ denotes the twisted intersection $  \Gb(\QQ)^{+} \cap gK g^{-1} $. 
Note that $$ \det(\Gamma_{g}) \subseteq \QQ^{\times}_{\geq 0} \cap \widehat{\ZZ}^{\times} = \left \{ 1 \right \} . $$ Therefore, $ \Gamma_{g} = \SL_{2}(\QQ) \cap gKg^{-1} $ is a  \emph{congruence subgroup} of $ \SL_{2}(\QQ) $ \cite[Proposition 4.1]{Milne},   and in particular,  
\emph{Fuchsian of first kind}. By \cite[\S1.7]{Miyake} or \cite[\S1.3]{Shimurabook}, quotients of the upper half-plane by such groups can be naturally identified with finite  complements  of compact Riemann surfaces, which, by the Riemann existence theorem, are automatically smooth projective varieties. Thus $\mathcal{S}_{K}(\CC)$ is the set of $\CC$-points of a  (possibly disconnected)   smooth   algebraic curve $\mathcal{S}_{K, \CC} $. A consequence of the theory of moduli  of elliptic curves is that $ \mathcal{S}_{K, \CC} $ admits a  specific  model $ \mathcal{S}_{K}$ over the reflex field $\QQ$,  referred to as its \emph{canonical model} \cite[Proposition 4.20]{DeligneTS}. It is “canonical” in the sense that the Galois action on certain algebraic points on $\mathcal{S}_{K}(\CC)$ arising via the embeddings (\ref{embedding}) for all imaginary quadratic fields is dictated by the reciprocity law (\ref{reciprocitylaw}). 
See    
Note  \ref{DeligneCM} for more details.

\begin{remark}   \label{affineremark}       Since  congruence subgroups of $ \SL_{2}(\QQ) $ always contain parabolic (cuspidal) elements of the form $ \left ( \begin{smallmatrix} 1 & k \\  &   1 \end{smallmatrix} \right ) $ for $ k $ large enough, the  surfaces  $ \mathcal{S}_{K}(\CC) $ are  themselves never compact. Thus the algebraic curve $ \mathcal{S}_{K, \CC} $ is \emph{affine} \cite[\href{https://stacks.math.columbia.edu/tag/0A24}{Tag 0A24},   \href{https://stacks.math.columbia.edu/tag/0A28}{Tag 0A28}]{stacks-project},    and therefore so is its canonical model  $ \mathcal{S}_{K} $ \cite[p.~302]{Poonen}.    
\end{remark}

\begin{remark} Deligne’s convention in \cite{DeligneTS} for the double coset spaces $\mathcal{S}_{K}(\CC)$ is opposite to that  \cite{Milne} and \cite{DeligneVar}.    In Deligne’s original setup for $ \Gb = \GL_{2, \QQ}$, the group $\Gb(\QQ)$ would act on the right of $\mathcal{H}^{\pm}$, as in \cite[\S 2.1.3]{Beilinson}, and the compact open subgroup $K$ acts on the left of $\Gb(\Ab_{f})$. The conventions of \cite{DeligneVar}, which are also adopted in the present paper, have become the standard choice in much of the recent literature surrounding the Langlands program.\footnote{Though, see Remark \ref{errorremark}.}
\end{remark} 

In what follows, we will refer to   compact open subgroups of $ \Gb(\Ab_{f}) $ as \emph{levels}  and the canonical model $\mathcal{S}_{K}$ as the \emph{modular curve of level $K$}. If $ F $ is an extension of $ \QQ $ contained in $ \mathbb{C}$, we will write $$ \mathcal{S}_{K , F } = \mathcal{S}_{K} \times_{\Spec \QQ }  \Spec F  $$  for the base change of $ \mathcal{S}_{K} $ to $ F $.   We will denote points in the double coset $ \mathcal{S}_{K}(\CC) $  by $ [x , g]_{K} $ where $ x \in \mathcal{X}_{\mathrm{std}} $ and $ g \in \Gb(\Ab_{f}) $.  For any two levels $ L $, $ K $ with $ L \subset K $, the map 
\begin{equation}  \label{degeneracy} 
\begin{split}\mathrm{pr}_{L, K }(\CC) : \mathcal{S}_{L}(\CC)      & \to \mathcal{S}_{K}(\CC)  \\   
[x, g]_{L}     &     \mapsto [x, g]_{K} 
\end{split} 
\end{equation} 
extends uniquely to a finite holomorphic surjection of compactified Riemann surfaces, and  therefore arises from a $\CC$-morphism $   \pr_{L, K, \CC}  :    \mathcal{S}_{L, \CC} \to \mathcal{S}_{K, \CC} $.     The  theory of moduli of elliptic curves   also    implies that this  morphism   descends to   a  finite flat morphism $ \pr_{L, K} :  \mathcal{S}_{L} \to \mathcal{S}_{K} $ of canonical models.   We refer to it as the \emph{degeneracy map} induced by the inclusion $ L  \hookrightarrow   K    $.     Moreover for any $ g \in \Gb(\Ab_{f}) $, the holomorphic isomorphism   
\begin{equation}   \label{twisting}
\begin{split}    [g]_{K} (\CC) :   \mathcal{S}_{K}(\CC)   &  \to  \mathcal{S}_{g^{-1} K g} (\CC)   \\ 
[x , g_{1}] _{K}     &       \mapsto [ x, g_{1} g ] _ { g^{-1} K  g  } 
\end{split}     
\end{equation}    
also descends to an  isomorphism $ [g]_{K}     : \mathcal{S}_{K} \to \mathcal{S}_{g^{-1}Kg} $ which we refer to  as   the    \emph{twisting isomorphism} induced by $ g $ on level $ K   $.     If $ g $ normalizes $ K $, this is an automorphism of $ \mathcal{S}_{K} $.

\begin{remark}  We observe that $\mathcal{T}_{U}(\CC)$ can also be written as  
$$ 
\Hb(\QQ) \backslash (\{h_{0}\} \times \Hb(\Ab_{f})) / U,  $$ 
where the actions of $\Hb(\QQ)$ and $U$ on $\{h_{0}\} \times \Hb(\Ab_{f})$ are analogous to those defined for $ \Gb $.  Both $\mathcal{T}_{U}$ and $\mathcal{S}_{K}$ are examples of \emph{Shimura varieties} associated with their   respective  Shimura data. 
\end{remark}

\begin{remark}   \label{weirdshimuravarcompar}    For a level $ L \subset \Gb(\Ab_{f}) $, let us denote by $ \mathcal{S}_{L}'$ the canonical model associated with  the  alternative datum  (\ref{weirdShimura}), where $ \mathcal{S}_{L}'(\CC) =  \Gb(\QQ) \backslash ( \mathcal{X}_{\mathrm{std}}' \times \Gb(\Ab_{f}))/L $ 
and  the double coset actions are analogous.  Then the isomorphism (\ref{weirdshimuraiso}) induces an isomorphism \begin{equation}
\label{weirdshimuravariso}      \phi_{K}(\CC) : \mathcal{S}_{K}(\CC)   \to \mathcal{S}_{{}^{t}  \negsmall K}'(\CC)   ,  \quad  \quad   
[x, g]_{K}         \mapsto   [ \phi(x),  {}^{t}g^{-1}]_{{}^{t} \negsmall K}  
\end{equation}    
of Riemann surfaces.  The theory of canoncial model stipulates that $ \phi_{K}(\CC) $ arises from a $ \QQ $-isomorphism    $$ \phi_{K}: \mathcal{S}_{K} \to \mathcal{S}_{\, {}^{t}   \negsmall     K}'  $$   of   canonical  models, and that these   isomorphisms collectively commute with the corresponding degeneracy maps and twisting isomorphisms on the two sides. 

On the other hand,  we can also make  make the  identification $$   \phi' : \mathcal{X}_{\mathrm{std}}  \xrightarrow{\sim} \mathcal{X}_{\mathrm{std}}', \quad \quad  g h_{\mathrm{std}} g^{-1} \mapsto  g h_{\mathrm{std}}' g^{-1} .  $$ 
This is holomorphic and isometric as it arises via the identifications  $ \mathcal{X}_{\mathrm{std}} \to \mathcal{H}^{\pm} \leftarrow  \mathcal{X}_{\mathrm{std}}' $ used in    (\ref{Xstd'compardiag}).    This implies that the map 
\begin{equation}   \label{veryweirdiso}   \phi_{K}'(\CC)    :    \mathcal{S}_{K}(\CC)  \xrightarrow{\sim}   \mathcal{S}_{K}'(\CC)     \quad \quad  [x,    g]_{K} \mapsto [\phi'(x), g]_{K}
\end{equation} 
is   also     an  isomorphism  of  Riemann  surfaces.   However, this   isomorphism  \emph{does not} descend to a morphism of    the   underlying       canonical models.     See  Remarks     \ref{weirdcompactremark} and \ref{weirdCMdivisorremark}.    
\end{remark}

\begin{remark}   For a general Shimura data $(\mathbf{G}'  , \mathcal{X})$, the corresponding double coset spaces are  unions of quotients of Hermitian symmetric domains by arithmetic subgroups of $\mathbf{G}'(\QQ)$. By the theorem of Baily–Borel \cite{BailyBorel}, such quotients are quasi-projective algebraic varieties over $\CC$. In the 1960s, Shimura showed that a large class of these varieties admit models over explicit number fields, which he referred to as canonical models. Deligne later reformulated Shimura’s results by giving an axiomatic description of Shimura's canonical models  in terms of the axoims (SV1)-(SV3), and proved the existence of such models in  great generality \cite{DeligneTS,DeligneVar}. The general existence of canonical models for all Shimura data was subsequently established by  Borovoi-Milne-Shih  \cite{MilneBirk}.
\end{remark}

\subsection{Pullbacks   of divisors}   \label{pullsec}      We will need the following two results in \S \ref{Heckecorrsec}, for which we are unaware of a suitable reference. 

\begin{lemma}    \label{faithful}       Suppose $ L, K $ are two levels of $ \Gb(\Ab_{f}) $ such that $L \trianglelefteq  K $  and    $ L \cap \left \{ -1 \right \}  = K \cap \left \{ - 1  \right  \}    $. Then the right action of $ K / L $ on $ \mathcal{S}_{L}(\CC) $ by twisting isomorphisms  is faithful.  In particular, the  degree of $ \pr_{L, K} $ is $ [K : L ]   $.        
\end{lemma}

\begin{proof}   Suppose  $ k \in K $ fixes all points in $ \mathcal{S}_{L}(\CC) $.  Then for each $  g \in \Gb(\Ab_{f} )   $,  there exist $ \gamma = \gamma_{g} \in \Gb(\QQ) $ and $ l = l_{g}  \in L $ such that $$  \gamma h _ { \mathrm{std} } \gamma^{-1}  =   h _{\mathrm{std}}  \quad  \text {and }  \quad   g k  =  \gamma g l . $$
Thus $ \gamma \in \mathrm{Stab}_{\Gb(\QQ)}(h_{\mathrm{std}}) $ from the first equality and $ \gamma  = g k l^{-1} g ^{-1} \in g K g^{-1} $ from the second, which means that $ \gamma $ lies in the intersection $$ \Gamma := \mathrm{Stab}_{\Gb(\QQ)}(h_{\mathrm{std}}) \cap gKg^{-1} =  \Gb(\QQ) \cap K_{\infty} \cap g K g^{-1}    . $$ Since $ \Gb(\QQ) $ is a discrete subgroup of $ \Gb(\Ab)$, $ \Gamma $ is a discrete subgroup of $   K_{\infty}  =  \mathbf{Z}(\RR)  \mathrm{SO}_{2}(\RR)  $.  As $ \mathrm{SO}_{2}(\RR) $ is compact and the subgroup $ \langle \gamma \rangle  $ is discrete in  $ K_{\infty} $,  it must be that $ \gamma^{n} \in \mathbf{Z}(\QQ)  =  \mathbf{Z}(\RR) \cap  \Gb(\QQ)   $      for some positive integer $ n $.  Since  $  \gamma  \in   K_{\infty}    \cap \Gb(\QQ) $, it equals the matrix (in the basis $ (1, -i ) $) of an endomorphism  in $ \mathrm{End}_{\QQ}(\QQ(i)) $ given by  multiplication by some $   z   = z _ { \gamma }    \in \QQ(i)^{\times}  $. The  condition    $ \gamma^{n} \in \mathbf{Z}(\QQ) $ is   then    equivalent to  $ z^{n} \in \mathbb{Q}^{\times}  $. Write  $$ z = r \zeta  $$  where $ r = |z| $  and $ \zeta \in \CC^{\times} $ satisfies $ |\zeta| = 1 $. Then $ r^{2} \in \QQ^{\times} $ and  $ \zeta  ^ { 2  }        \in   \mathbb{Q}(i)     $ is  a root of unity. Since the only roots of unity in $ \mathbb{Q}(i) $ are $ \left \{\pm 1, \pm   i    \right \} $, we see that $ z^{2} \in \QQ^{\times} \sqcup \QQ^{\times} i $. As  $ z \in \QQ(i)^{\times} $, it is not hard to see that $ z $ is a $ \QQ^{\times} $-multiple of an element in $  \left \{ 1, i ,  1+i , 1 - i \right \}  $. Thus $$ \gamma \in \mathbf{Z}(\QQ) \sqcup   \mathbf{Z}(\QQ) J     \sqcup \mathbf{Z}(\QQ) J_{1}  \sqcup  \mathbf{Z}(\QQ) \, {}^{t}  J_{1} $$
where $$   J :  = \iota(i) =  \left (  \begin{smallmatrix}  & 1 \\  - 1 &  \end{smallmatrix}  \right ) \, \text{ and } \, J_{1}  :  =  \iota(1+i) =  \left (  \begin{smallmatrix}  1 & 1 \\  -  1 &   1 \end{smallmatrix}  \right ). $$       If $ \gamma = \gamma_{g} $ is central for some choice of $ g $, the equality $ gk  = \gamma g l $ implies that $ k = \gamma l  $. In this case,  $$ \gamma \in  K \cap \mathbf{Z}(\QQ)  =  K \cap \left \{ \pm 1 \right \} = L \cap \left \{ \pm 1 \right \} =  \mathbf{Z}(\QQ) \cap  L , $$  which forces $ k $ to be in $ L $ and we are done. So suppose that $ \gamma = \gamma_{g} $ is not central for any  $  g  \in \Gb(\Ab_{f}) $.  Choose a positive integer $ N $ such that for $\ell > N  $, both $ K $ and $ L $ are unramified at $ \ell $. The equality $ gk = \gamma g l  $  implies   that    $$ g^{-1}_{\ell} \gamma   g_{\ell} \in \GL_{2}(\ZZ_{\ell})  $$ 
for all $ \ell > N $ and $ g \in \Gb(\Ab_{f} )$,  where $ g_{\ell} $ denotes the component of $ g $ at $ \ell $. But if we take any $ g $ such that $ g_{\ell} =   \left (  \begin{smallmatrix}  \ell &  \\  &  1   \end{smallmatrix}  \right ) $ for some prime $ \ell  >    N $, we have $$ g_{\ell} ^{-1} J  g_{\ell}   =   \left (    \begin{smallmatrix}  & \ell^{-1} \\ -\ell &  \end{smallmatrix}  \right )      ,   \quad g_{\ell}^{-1} J_{1} g_{\ell}    =     \left (    \begin{smallmatrix}  1  &  \ell^{-1}   \\ -\ell &   1  \end{smallmatrix}  \right )      , \quad  g_{\ell}^{-1}   ( {}^{t}  J_{1}   )   g_{\ell}   =      \left (    \begin{smallmatrix}  1  &  - \ell^{-1}    \\ -\ell & 1  \end{smallmatrix}  \right )     $$
and none of these belong to $ \GL_{2}(\ZZ_{\ell}) \cdot \mathbf{Z}(\QQ) $.   
\end{proof}    

The next result is an  adelic version of \cite[Proposition 1.37]{Shimurabook}.

\begin{lemma}   \label{pullback}      Suppose $ L , K $ are two levels of $ \Gb(\Ab_{f}) $ with $ L \subset K $ such that $ L \cap \left \{ -1 \right \}  = K \cap  \left \{ -  1   \right \}    $. Then the pullback of $ [x, g]_{K}   \in   \mathcal{S}_{K}(\CC)    $  under $ \pr_{L, K }$ as a  divisor equals  $ \sum_{ K/ L} [x , g\gamma]_{L} $
\end{lemma} 
\begin{proof}  First assume that $ L \trianglelefteq K $. Then by  Lemma  \ref{faithful},  we have a faithful right action 
of $ \Gamma := K  / L $ on $ \mathcal{S}_{L}(\CC) $ by holomorphic automorphisms.   Let $ p \in \mathcal{S}_{L}(\CC) $ be any point. By  \cite[Theorem 3.4]{Rick} (applied to the component $ \mathcal{C}_{p}$ of $ \mathcal{S}_{L}(\CC)$ containing $ p $ and its stabilizer in $ \Gamma $), we see  that the ramification index of $ \pr =  \pr_{L, K} $ at  $ p $ is $ | \mathrm{Stab}_{p}(\Gamma)| $. Thus the pullback of $ q : = \pr_{L, K}(p) $  under  $   \pr_{L, K }$  is $$ \pr_{L, K}^{*}(q)  =   \sum_{ p \in \pr ^{-1}(q) }  | \mathrm{Stab}_{p}(\Gamma) | p   .  $$
By the orbit-stabilizer theorem, the right hand side above is $ \sum_{ \gamma \in \Gamma } p_{0} \cdot \gamma $ where $ p_{0} \in \pr^{-1}(q) $ is  any choice. 

To address the general case, choose a compact open subgroup $ L' \subset L $ such that $ L   '    $ is normal in $ K $ (e.g., take the intersection    of     $ K $ with all the  conjugates of $ L $ by $ K /  L $). Replacing $ L' $ with $ L' ( K \cap \left \{ \pm 1 \right \}) $, we can assume that $ L' \cap \left \{ -1 \right \} = K \cap  \left \{ - 1 \right \}   $ and we still have $ L' \subset L $,     $ L ' \trianglelefteq     K $.    
If $ p = [x, g]_{K} $, then  \begin{align*}    [L : L']  \cdot  \pr_{L, K}^{*} (p)    &      =      
( \pr_{L' , L , *} \circ \pr_{L', L}  ^{*}   )     \circ \pr_{L, K}^{*}  ( p  )    \\  &       =     \pr_{L', L  *}  \circ  \pr_{L', K}^{*}  (  p   )  \\
& =  \pr_{L', L, *} \Big (  \sum   \nolimits _{\gamma \in K/ L ' }  [x , g \gamma ]_{L'}  \Big )  \\      
& =  \sum  \nolimits    _{\gamma \in K/L'} [ x , g \gamma ] _  {   L   }   \\   
&    = [L : L' ]     \cdot     \sum   \nolimits    _{\gamma \in K / L } [x , g  \gamma ]_{L} 
\end{align*}
where  $ \pr_{L', L, *} $ denotes pushforward. This  establishes the claim in  general.  
\end{proof}

\begin{remark} Suppose   that    $ -1 $ is in $ K $  but not in $ L $. Define  $ L_{1} $ to be the product $ L \cdot   \left \{ \pm 1  \right  \}     $. 
Then $ \mathcal{S}_{L}(\CC) = \mathcal{S}_{L_{1}}(\CC) $ and $ \pr_{L, K}  = \pr_{L_{1}, K} $ has degree $ [K : L]/ 2 $. In this case, the  pullback formula    holds with $ L $ replaced by $ L_{1}$.  
\end{remark} 
  
\begin{remark}   \label{neatremark}  When working with Shimura varieties, it is common to assume that the levels are sufficiently small as in \cite[Definition 2.1]{Fouquet}, or more precisely, neat in the sense of \cite[\S 0.1]{PinkThesis}. If $ K  $    as above is neat, then the groups $\Gamma_{g}$ (\ref{shimuravardecom1}) (and even their images in $\mathbf{G}(\mathbb{Q})/\mathbf{Z}(\mathbb{Q})$) are torsion free, and the degeneracy map \(\pr_{L,K}\) (\ref{degeneracy}) is unramified (hence étale)   for any $ L \subset K  $  by \cite[Lemma 2.7.1]{CZE}.

For general Shimura data, the corresponding Shimura varieties need not be smooth unless the chosen levels  are neat. Smoothness is a crucial assumption needed to invoke Borel's theorem on  algebraicity of holomorphic maps between hermitian symmetric domains \cite[Theorem 3.14]{Milne} (cf., \cite[Theorem 2]{Kiernan}),   which is   needed    to establish the algebraicity of   certain    natural  maps  between Shimura varieties \cite[Theorem 5.16]{Milne}. The neatness assumption, however, is not needed in our context, since the modular curves admit a  smooth structure    for any level.  Assuming  neatness  also excludes some important level structures from  consideration; see Example \ref{Gamma0(N)}.
\end{remark}

\subsection{Moduli interpretation}    \label{modulisec}  Observe that the Hodge structure on $ V_{\mathrm{std}} : = \QQ \oplus \QQ $ induced by any $ h   \in   \mathcal{X}_{\mathrm{std}}     $ is of type   $$  \{ (-1, 0), (0, -1)   \} .   $$    Thus $(\Gb, \mathcal{X}_{\mathrm{std}}) $ is the so-called  \emph{Siegel   Shimura datum of genus one} \cite[\S 6]{Milne}, \cite[\S 1.3.1]{DeligneVar}.      Following these sources, we can give the  following  moduli  interpretation for $ \mathcal{S}_{K}(\CC)$.   
Consider the set  $ \mathcal{E} $ of  all pairs $ (A, \eta) $ where $ A $ is  an    elliptic curve over the complex numbers\footnote{To avoid set theoretic issues, we will think of all elliptic curves over $ \CC $ as  quotients of $ \CC $ by a $\ZZ$-lattice.}    and $$ \eta :   V_{\mathrm{std}} \otimes_{\QQ} \Ab_{f}  \to  \mathrm{H}_{1}(A(\CC), \ZZ) \otimes \Ab_{f}   $$     is an isomorphism of   $ \Ab_{f} $-modules. Recall that the singular   homology $ \mathrm{H}_{1}(A(\CC), \RR)  = \mathrm{H}_{1}(A(\CC), \ZZ) \otimes \RR $ is endowed with a unique complex structure arising from the Hodge decomposition on $  \mathrm{H}^{1}(A(\CC), \CC)$.  If $ A(\CC) = \CC / \Lambda $ for $ \Lambda $ a $ \ZZ $-lattice in $ \CC $, then  $$  \mathrm{H}_{1}(A(\CC)    , \ZZ) \simeq \Lambda $$ canonically and the     complex    structure  on    $ \mathrm{H}_{1}(A(\CC), \ZZ)  \otimes \RR  $ is identified with the  one  on  $ \Lambda \otimes \RR = \CC $ given by multiplication by $ i   $   \cite[Example 1.1.4]{DeligneVar}.  
For each    $  (A, \eta ) \in \mathcal{E} $,  pick an isomorphism $ \sigma :   \mathrm{H}_{1}(A(\CC), \QQ)  \to  V_{\mathrm{std}}    $ of $ \QQ$-vector spaces.   Let  $ J_{\sigma} $ be the  complex structure on $ \RR^{2} = V_{\mathrm{std}} \otimes_{\QQ} \RR  $ obtained by transport of structure along $ \sigma_{\RR}    $ and  let $ g_{\sigma} \in \Gb(\Ab_{f}) $ be the composition $$  V_{\mathrm{std}}  \otimes_{\QQ}  \Ab_{f}  \xrightarrow{ \,  \,  \eta   \, \,      }  \mathrm{H}  _   {1}(A(\CC)     , \ZZ) \otimes \Ab_{f}     \xrightarrow { \sigma \otimes 1 }   V_{\mathrm{std}}  \otimes_{\QQ}   \Ab_{f}    .    $$
Replacing $ \sigma $ by $ q \circ \sigma $ for $ q \in \Gb(\QQ )$ replaces $ J_{\sigma} $ with $ q J_{\sigma} q^{-1} $ and $ g_{\sigma} $ with $ q g_{\sigma} $. Thus,   each pair $ (A, \eta) $ determines a well-defined point  $$ [x_{\sigma}, g_{\sigma}] \in  \Gb(\QQ) \backslash  ( \mathcal{X}_{\mathrm{std}}  \times  \Gb (\Ab_{f})   )  $$ 
where $ x_{\sigma} \in \mathcal{X}_{\mathrm{std}} $ corresponds to $ J_{\sigma} \in \mathrm{CS}(\RR^{2}) $ under  the  canonical    identification  made   in       (\ref{variousiden}).   
Two pairs $ (A_{1}, \eta_{1}) $, $ (A_{2}, \eta_{2} )$ give the same point under this process if and only if there is an isogeny $ f :  A_{1} \to A_{2} $ such that $   ( f_{*} \otimes 1 ) \circ \eta_{1}  =  \eta_{2} $. This defines an equivalence relation $ \sim $  on $ \mathcal{E} $ and we have   a  bijection
\begin{equation}  % 
\label{moduliisominf}    
\mathcal{E}/\!  \!    \sim \, \,  \MapsTo   \, \,   \Gb(\QQ) \backslash  (  \mathcal{X}_{\mathrm{std}} \times \Gb(\Ab_{f})   )  .   
\end{equation}   
The right action of $ g\in \Gb(\Ab_{f})  $ on the right hand side of  (\ref{moduliisominf})     corresponds to the action  on   $ \mathcal{E} /\! \! \sim $ that sends the equivalence class of $ (A , \eta ) $ to that of $ (A, \eta \circ g ) $. Quotienting by $ K $, we obtain  an  identification    
\begin{equation}   \label{moduliisog}   (\mathcal{E}/\!\!\sim)/K  \,  \MapsTo   \,  \mathcal{S}_{K}(\CC).
\end{equation}   
For $ (A, \eta )$ as above,   the     $K$-orbit of $ \eta $  is referred to as a \emph{$K$-level structure} on $ A $. Thus (\ref{moduliisog}) says that $ \mathcal{S}_{K}(\CC) $ is a  parameter space for isogeny classes of elliptic curves  equipped with a $ K $-level structure.   %   See  \cite[\S 6]{Milne}.    
\begin{remark} 
The left-hand side of (\ref{moduliisog}) actually forms the set of $\CC$-points of a moduli functor that associates to any $\QQ$-scheme $S$ the set of isomorphism classes of elliptic curves over $S$ (up to isogeny)  equipped with a $K$-level structure, which is now   defined  in terms of local systems  arising from the  first \'{e}tale homology of the geometric fibers of the elliptic curve over $ S $.       See,  e.g.,    \cite[\S 2.6]{Ngo-Genestier} for a precise formulation. This functor can be shown to be representable by a coarse moduli scheme $M_{K}$ over $ \QQ$,\footnote{which is a fine moduli space if $ K $ is neat} whose $\CC$-points  are identified with $\mathcal{S}_{K}(\CC)$ via (\ref{moduliisog}). One then checks that $M_{K}$ (for varying $K$) satisfies all the properties required for it to serve as a canonical model for $\mathcal{S}_{K}(\CC)$, and this is the scheme  we  have denoted by $ \mathcal{S}_{K}$ above. See  also       Note~\ref{DeligneCM}.   
\end{remark}    

\begin{remark}    \label{weirdmoduli}    On the other hand, the alternative   datum  (\ref{weirdShimura})  induces the   dual    Hodge structure of type $$ \left \{(1, 0), (0, 1) \right \}   $$    on $ V_{\mathrm{std}}$. The canonical models for this datum can be constructed using  \cite[Crit\'{e}re 2.3.1]{DeligneVar}. More precisely, we apply Proposition 2.3.2 of \emph{loc.~cit.} with the dual representation $\rho^{\vee} : \Gb \to   \GL( V_{\mathrm{std}}^{\vee}    ) $ to embed this datum into the Siegel datum of genus one.\footnote{Shimura data that embed into the Siegel Shimura data (of some genus) are said  to  be    of         \emph{Hodge type.}} The  resulting embedding is then exactly the  inverse of the  isomorphism     (\ref{weirdshimuraiso}). The   % In the terminology of \cite[\S 7]{Milne}, the data  (\ref{weirdShimura}) is of  \emph{Hodge type}.  
Shimura varieties attached to (\ref{weirdShimura}) inherit their moduli interpretation form those of (\ref{SDdatum}), and this   comparison swaps the maps used to defined level structures  with their duals.      That is, $ \mathcal{S}_{K}'(\CC) $ parametrizes elliptic curves $ A $ (up to isogeny) equipped with a $ K^{t}$-orbit of isomorphisms $$   V_{\mathrm{std}}   \otimes_{\QQ   }  \Ab_{f}   \to  \mathrm{H}_{1}(A(\CC), \ZZ)  \otimes  \Ab_{f}     $$
or equivalently, a $ K $-orbit of isomorphisms $ \mathrm{H}^{1}( A(\CC), \ZZ) \otimes \Ab_{f}   \to   V_{\mathrm{std}}^{\vee}   \otimes_{\QQ}  \Ab_{f}      $.    
\end{remark} 

\subsection{Galois action on components}   \label{compsec}    
The curve $ \mathcal{S}_{K} $ is not geometrically connected in general, and one can describe its geometrically connected components as follows.  Let $ \det : \Gb \to \GG_{m} $ be the determinant map and $  \mathrm{sgn} : \mathcal{X}_{\mathrm{std}} \to \{ \pm 1 \} $ be the map  $ g    \,    \negsmall   h_{\mathrm{std}} \,   \negsmall     g^{-1} \mapsto \det(g)/|\det(g)|$. Then $ \mathrm{sgn} \times \det : \mathcal{X}_{\mathrm{std}}  \times \Gb(\Ab_{f}) \to  \left \{  \pm 1 \right \}  \times  \Ab_{f}^{\times} $ induces a surjective map 
\begin{equation}  \label{connmap} 
\mathcal{S}_{K}(\CC) \to \QQ^{\times} \backslash \big( \{ \pm 1 \} \times \Ab_{f}^{\times} \big) / \det(K)     =  \mathbb{Q}^{\times}_{>0} \backslash \Ab_{f}^{\times}/ \det(K)    
\end{equation} 
whose fibers are geometrically connected components of $ \mathcal{S}_{K}(\CC) $ \cite[Theorem 5.17]{Milne}. Thus  the   geometric curve $ \mathcal{S}_{K,  \overline{\QQ}}    $ decomposes as a disjoint union   of  curves    $ \mathcal{S}_{K,\alpha} $ indexed by  
$ 
\alpha \in \QQ^{\times} \backslash \big( \{ \pm 1 \} \times \Ab_{f}^{\times} \big) / \det(K)   $.   The components $ \mathcal{S}_{K,\alpha} $ are not necessarily   defined over $ \QQ $ but are defined on certain abelian extensions of $ \QQ $   inside $ \CC $,  which  are    determined by the Galois action on the components defined via  the   reciprocity law for the Shimura datum   
\begin{equation}   \label{compreci}    \det \circ  \, h_{\mathrm{std}} : \delT \to \GG_{m}   
\end{equation}  
for $ \GG_{m} $  similar to the one in \S \ref{canonicalmodsec}.    More precisely, let $ (\alpha_{\infty}, \alpha_{f}) \in  \{ \pm \}  \times \Ab_{f}^{\times} $ be a representative of $ \alpha $. Let  $ \sigma \in \Gal(\QQ^{\mathrm{ab}}/\QQ)$ and  pick   any  $ a = (a_{\infty}, a_{f}) \in \Ab^{\times} $  such  that   $ \sigma = \Art_{\QQ}(a) $, where 
\begin{equation}   \label{CFTforQ}    \Art_{\QQ} : \QQ^{\times} \backslash \Ab^{\times} \to \Gal(\QQ^{\mathrm{ab}}/\QQ)  
\end{equation}    denotes the Artin map,  normalized so that uniformizers are   mapped  to    geometric Frobenii.
Then  
$$ 
\sigma(\mathcal{S}_{K,\alpha}) = \mathcal{S}_{K,\alpha'} 
$$  
where $ \alpha' $ is represented by  $ 
(\mathrm{sign}(a_{\infty}) \alpha_{\infty}, a_{f}\alpha_{f}) \in \{ \pm  1\} \times \Ab_{f}^{\times}  $. Since this action is transitive, the   scheme     $ \pi_{0}( \mathcal{S}_{K}) $   is     identified with  the spectrum of   the     fixed field of $ \mathrm{Art}_{\QQ}(\QQ^{\times} \det(K) )$  and so   has a unique $ \QQ $-point. This implies  that    each      modular  curve    $ \mathcal{S}_{K} $  is  a smooth connected (hence integral)   scheme  over   $   \QQ   $.

\begin{remark}    Note  that the Galois action on components is independent of the identification of $ \pi_{0}(\mathcal{S}_{K})(\CC) $ with the quotient on the right   hand  side   of     (\ref{connmap}). That is,  if we replace $ \det : \Gb \to \GG_{m} $ with its inverse, the reciprocity law is also replaced by its inverse,  and we end   up      obtaining  the same Galois action on  the components of   $ \mathcal{S}_{K,   \overline{\QQ}}    $.    
\end{remark}  

\begin{remark}   \label{weirdcompactremark}    For  the alternative  datum   (\ref{weirdShimura}),   the reciprocity law on components uses $ \det \circ \, h _{\mathrm{std}}' $ which sends $ z \in \delT(\RR)   $ to $ (z\bar{z})^{-1}  \in   \mathbb{G}_{m}(\RR)   $. So while $ \mathcal{S}_{K}(\CC) $ and $ \mathcal{S}_{K}'(\CC) $ are isomorphic Riemann surfaces, the Galois action on their components   differ by a sign. For this reason alone, the  isomorphism (\ref{veryweirdiso})  cannot  descend to the   underlying    canonical models across all levels.  See also  Remark  \ref{weirdCMdivisorremark}.      
\end{remark}

\subsection{Classical modular curves}   \label{classicalmodular}           
One can obtain    classical modular curves from   adelic ones  as  follows. For $ \tau \in \mathcal{H}^{\pm}$, let $ x_{\tau} \in \mathcal{X}_{\mathrm{std}} $ be the point corresponding to $ \tau $ under (\ref{variousiden}). Given a representative $  (\alpha_{\infty}, \alpha_{f}) $ for $ \alpha    \in \QQ^{\times} \backslash \{ \pm  1 \} \times \Ab_{f}^{\times} / \det(K) $, let $ \beta_{f} \in \Gb(\Ab_{f}) $ be any element such that $ \det(\beta_{f}) = \alpha_{f} $. Then the map   
\begin{equation*}    
\begin{split}    
\mathcal{H}^{+}   &     \to \mathcal{S}_{K}(\CC)   ,        \quad  \quad      
\tau    \mapsto    [ x_{( \alpha_{\infty} \tau)} , \beta_{f} ]_{K}   
\end{split}   
\end{equation*}    
induces a holomorphic covering of $ \mathcal{S}_{K,\alpha}(\CC) $  that  factors through an  isomorphism  
\begin{equation}   
\label{quotient}    
  \Gamma \backslash \mathcal{H}^{+} \MapsTo  \mathcal{S}_{K,\alpha}(\CC),   
\end{equation}    
where $ \Gamma =   \Gamma_{\beta_{f} }  =   \Gb(\QQ)^{+} \cap \beta_{f} K \beta_{f}^{-1} .  $  
Of course,  replacing $ \beta_{f} $ with $ \beta_{f}  k   $ for $ k \in  K $ does not change the map  (\ref{quotient}).     However, replacing $ \beta_{f} $ with $ q \beta_{f} $ for some $ q \in \Gb(\QQ)^{+} $ changes  $ \Gamma $ to $ q \Gamma q^{-1} $, and the resulting   identifications    may be   different  even      when $  \Gamma =  q \Gamma q^{-1} $.  See  Remark \ref{diffembed}.

When $ K  $   is   contained  in   $    \GL_{2}(\widehat{\ZZ}) $,  the ``isogeny class" interpretation given in \S \ref{modulisec} can be rigidified to the more familiar     ``isomorphism class" interpretation   %  when  $ K \subset \GL_{2}(\widehat{\ZZ}) $ 
as  follows.    Suppose first  that   $$  K = K(N)  = \widehat{\Gamma}(N)  $$  is the (normal) subgroup of matrices in $ \GL_{2}(\widehat{\ZZ}) $  that reduce modulo $ N $ to identity. Then $ K $ is exactly the group   of    elements in $ \GL_{2}(\widehat{\ZZ}) $  whose  reductions modulo $ N $  act  trivially on $   ( \widehat{\ZZ}/N\widehat{\ZZ}) ^{2}   =   ( \ZZ / N \ZZ   )   ^{2}       $.    We  refer to $ K = K(N) $ as the \emph{principal  congruence subgroup of level $ N   $}.         Let $ \mathcal{E}(N) $ be the set of pairs $ (A, \nu) $ where $ A $ is an elliptic curve  over  $  \CC   $      and   
\begin{equation}   \label{numap} \nu : (\ZZ/ N\ZZ)^{2} \to A[N] (  \CC  )       
\end{equation}    is an isomorphism of $ \ZZ/N \ZZ $-modules that  we refer to as a \emph{full level $N  $ structure on $ A[N]$}.     Define an equivalence relation on $ \mathcal{E}(N) $ by declaring two pairs $ (A_{1}, \nu_{1}) $, $ (A_{2}, \nu_{2}) $ to be equivalent if there is an isomorphism $ f : A_{1} \to A_{2} $ of  elliptic   curves     satisfying $ f \circ  \nu_{1} =  \nu_{2} $.   Given  $ (A, \nu) \in \mathcal{E}(N) $,  one can choose an isomorphism   $$   \widehat{\nu} : \widehat{\ZZ}^{2} \to  \varprojlim \nolimits   _{N}  ( A[N](\CC)      )     $$     
whose reduction modulo $ N $ equals $ \nu $. Moreover, the set of all possible such choices constitutes    a $ K $-orbit.   
Let   $ \eta $ denote the map  $$   \widehat{\nu} \otimes 1 :  \Ab_{f}^{2}  \to  \big ( \varprojlim  \nolimits    _{N}  ( A[N]  ( \CC  )  )   \big     ) \otimes   _  {  \widehat{\ZZ} }     \Ab_{f}.   $$
Since the target of $ \eta $ is canonically identified with $ \mathrm{H}_{1}(A(\CC)    , \ZZ) \otimes  \Ab_{f} $,  the map $ (A, \nu) \mapsto (A, \eta) $ gives a map  from $  \mathcal{E}(N)/\!\!\sim $ to $   (\mathcal{E}/\! \! \sim)/ K  $ that is easily seen to be a bijection. Using (\ref{moduliisog}), we  obtain an  identification 
\begin{equation}    \label{moduliisom}  
\Psi  _   {   N    }     :    \mathcal{E}(N)/ \! \!  \sim \, \,  \,   \MapsTo \,  \,  \,    \mathcal{S}_{K}(\CC) 
\end{equation}   
As before, the twisting action   of    $ \kappa \in \GL_{2}(\widehat{\ZZ}) $ on $ \mathcal{S}_{K}(\CC) $ is identified under (\ref{moduliisom})  with the action on $ \mathcal{E}(N)/\!\!\sim $ that sends the class of $ (A, \nu) \in  \mathcal{E}(N) $ to that $ ( A, \nu \circ   \bar{\kappa}) $,  where $ \bar{\kappa} \in \GL_{2}(\ZZ / N \ZZ )$   denotes  the  reduction of  $ \kappa    $   modulo  $  N   $.

We can describe the  inverse of (\ref{moduliisom}) more explicitly. Observe that since $ \Gb(\QQ) \GL_{2}(\widehat{\ZZ}) = \Gb(\Ab_{f}) $, each class in $ \Gb(\QQ) \backslash \Gb(\Ab_{f})/ K $ contains a representative in $ \GL_{2}(\widehat{\ZZ}) $.  Since   $ \GL_{2}(\widehat{\ZZ})/K = \GL_{2}(\ZZ/N\ZZ) $,  we can write    $$ \mathcal{S}_{K}(\CC)    =  \GL_{2}(\ZZ)  \backslash   ( \mathcal{X}_{\mathrm{std}}   \times \GL_{2}(\ZZ / N \ZZ  )   ) .  $$ 
Given $ (x,  \kappa  ) \in   \mathcal{X}_{\mathrm{std}}  \times \GL_{2}(\ZZ / N  \ZZ    ) $, choose an element  $   g       \in \GL_{2}(\RR) $ such that $ x =  g  \,      \negsmall  h_{\mathrm{std}} \,    \negsmall      g^{-1} $  and   write    $ g ^ { - 1    }  =   \left (   \begin{smallmatrix}  a & b \\ c &  d \end{smallmatrix}     \right  )  $. 
Then the complex structure $$  J_{g} = g \,  \negsmall   h_{\mathrm{std}}(i)  g ^{-1} \in \mathrm{CS}(\RR^{2})  $$ on $ \RR^{2} \simeq \CC $  corresponding to $ x \in  \mathcal{X}_{\mathrm{std}}   $ under    (\ref{variousiden})  equals the matrix   of     multiplication by $ i $ in the ordered     $\RR$-basis $( a - ci , b - di ) $ of $ \CC $   and therefore also in the ordered   $\RR$-basis   
\begin{equation}    \label{thebasis}  (  1 , (b - di )/ (a - ci ) )  =   ( 1, - g \cdot i  )    
\end{equation}   
Let $ \tau = g \cdot i \in \mathcal{H}^{\pm} $ denote the point corresponding to $ x $ under (\ref{variousiden})  and    let
\begin{equation}   \label{lambdatau}   \Lambda_{\tau} :=  \ZZ +  \ZZ(-\tau) = \ZZ + \ZZ \tau     
\end{equation}   
be the $ \ZZ $-lattice spanned by   the   basis  $ (1,  -\tau   )   $ of $ \CC $.  Consider the complex elliptic curve $ A_{\tau} $  satisfying $ A_{\tau}(\CC)   =      \CC / \Lambda_{\tau}    $    endowed   with  the    full      level      structure % $ \nu : (\ZZ/ N\ZZ)^{2} \to A_{\tau}[N] $ via   
\begin{equation}   \label{levelstrucisom}    
\begin{split}   \nu  _ {  \kappa  }  :  (\ZZ / N \ZZ) ^{2}  &  \longrightarrow  N^{-1}\Lambda_{\tau} / \Lambda_{\tau}   =   A_{\tau}[N](\CC) \\  v     &  \mapsto   w_{1}/N  - w_{2} \tau/N + \Lambda_{\tau} 
\end{split}   
\end{equation}   
where $ w_{i} + N \ZZ = \pr_{i}( \kappa  v  ) $ denotes    the $ i$-th component of $  \kappa    v  $. Since the choice of the $ \ZZ$-basis $ (1, -\tau) $  for $ \Lambda_{\tau} $ corresponds to fixing an isomorphism $ \sigma_{\ZZ} :   \mathrm{H}_{1}(A_{\tau}(\CC), \ZZ) \to \ZZ^{2} $,  it is not   hard to  see that   class of $ (A_{\tau}(\CC),   \nu_{\kappa})  $ maps to   $ [x,    \kappa]_{K}    \in    \mathcal{S}_{K}(\CC) $ under  (\ref{moduliisom}). We observe that   the definition of full level structure (\ref{levelstrucisom}) and the moduli interpretation   obtained  here    matches with the one  stated   in     \cite[\S 4.2]{Scholl}. 
\begin{remark}   \label{thebasisremark}  Note that we can also work with the ordered basis $ ((a-ci)/(b-di), 1) = (-1/\tau, 1)  $. This gives us the lattice $ \Lambda_{1/\tau} =  \ZZ \cdot (-1/\tau) + \ZZ$, which is homothetic to $ \Lambda_{\tau}$ via multiplication by $ -\tau $.    
\end{remark}

Now suppose   that       $ K \subset \GL_{2}(\widehat{\ZZ}) $ is arbitrary.  Choose an integer   $ N \geq 1 $ such that $ K(N) $ is contained in $ K  $.  Then   the degeneracy map $$ \pr :  \mathcal{S}_{K(N) }(\CC)   \to   \mathcal{S}_{K}(\CC) $$  identifies $ \mathcal{S}_{K}(\CC) $ as a  quotient of $ \mathcal{S}_{K(N)}  (  \CC     )     $ by $ K/K_{N} $.  So   $ \mathcal{S}_{K}(\CC) $   parametrizes    isomorphism classes of    elliptic curves $ A $  endowed with a $ K / K_{N} $-orbit of isomorphisms $     ( \ZZ/ N \ZZ)^{2}   \to  A[N]  (  \CC   )     $. This interpretation can also be  obtained by noting that $$ \mathcal{S}_{K}(\CC) = \GL_{2}(\ZZ) \backslash (\mathcal{X}_{\mathrm{std}}   \times  \GL_{2}(\widehat{\ZZ}) )/K $$
and  writing the obvious integral counterpart of the discussion  in    \S \ref{modulisec}.

\begin{remark}   \label{orbitremark}  As evident, the data of a full level   $  N   $    structure $ \nu : (\ZZ/ N \ZZ )^{2}  \to A[N]   (   \CC    )     $ is the data of an ordered basis $ (e_{1},  e_{2}) $ for $ A[N]   (   \CC    ) $ given by  $$ e_{1}  = \nu(1, 0),  \quad   \quad        e_{2}    = \nu(0, 1) .   $$    In this interpretation,  the    action of $  \left (    \begin{smallmatrix} a & b \\ c &  d  \end{smallmatrix}   \right  )   \in   \GL_{2}(\widehat{\ZZ})     $ sends $ (A,  (e_{1}, e_{2})) $ to $ (A,    (e_{1}',  e_{2}')    ) $ where   
\begin{equation}   \label{basisrelation}        (e_{1}',  e_{2}')  =     (e_{1},  e_{2})    \left (   \begin{smallmatrix} a  & b  \\ c & d  \end{smallmatrix}  \right )   
\end{equation}    
That is,    $$  e_{1}'   = a e_{1} + c e_{2}  \quad  \text{ and }   \quad     e_{2}'    = b e_{1} + d  e_{2}    . $$  This  interpretation     can  be  used     to  give more explicit descriptions of $ \Gamma $-orbits of $ \nu $ for   certain    subgroups $ \Gamma $ of $ \GL_{2}(\ZZ / N \ZZ )$.         
\end{remark}

\begin{remark} One can similarly write a rigidified version of the moduli interpretation for the alternative datum (\ref{weirdShimura})  mentioned in Remark   (\ref{weirdmoduli}) for $ \widehat{\Gamma}(N)$ level  structures. If $ x \in \mathcal{X}_{\mathrm{std}}' $ corresponds to $ \tau \in \mathcal{H}^{\pm} $ via the right vertical arrow of (\ref{Xstd'compardiag}),  then the conventions of \S \ref{modulisec} force us to associate to $ (x, \kappa) \in \mathcal{X}_{\mathrm{std}}'   \times   \GL_{2}(\ZZ/N \ZZ )$ the ordered basis $ (\tau, 1) $ and the level structure  
\begin{align*} (\ZZ / N\ZZ)^{2}   & \to  N^{-1} \Lambda_{\tau}/\Lambda_{\tau} \\ 
v   &     \mapsto  w_{1}\tau/N + w_{1} / N + \Lambda_{\tau}  
\end{align*}   
where $ w_{i}  + N\ZZ = \pr_{i}(\kappa v) $.    
Moreover, the action of $ \gamma \in \GL_{2}(\widehat{\ZZ}) $ on level structures is via  pre-composition with $ {}^{t}   \negsmall \gamma^{-1} $. One then recovers  the moduli  interpretation   mentioned  in    \cite[Definition 5.1.1]{LSZ},  after rewriting the  analogue  of  the    relation (\ref{basisrelation})    in terms of column vectors.    
\end{remark}

\begin{example} \label{Gamma(N)}  Suppose $ K = \widehat{\Gamma}(N)    $. Since $ \det(K) =    \prod_{\ell} \ZZ_{\ell}^{\times}  \cdot     \prod_{\ell \mid N} (1 + N\ZZ_{\ell})   $, the  components  of   $ \mathcal{S}_{K}(\CC)   $ are indexed by $$ \QQ^{\times}    \backslash   (  \left \{   \pm 1  \right \}   \times    \Ab_{f}^{\times}  )  / \det(K)  \simeq    ( \ZZ / N \ZZ )^{\times } .  $$   
The    curve $ \mathcal{S}_{K}(\CC) $ thus  has $ \phi(N) $ connected components where $ \phi $ denotes the Euler totient function and the reciprocity law descibed in \S \ref{compsec} implies that each   component    is defined over the $N$-th cyclotomic extension $ \QQ(\mu_{N}) $ where $$ \mu_{N} = \{ e^{2\pi i   k /N} \, | \, 0 \leq   k    \leq N-1 \}  \subset  \CC .   $$       %are the roots of unity  in  $  \CC  $. 
Let us  consider the component  of $ \mathcal{S}_{K} $     indexed by the class of $  1 \in  ( \ZZ / N \ZZ)^{\times} $ and   take    $ \beta_{f} =   1  $ as a representative for the component.  Then   $ \Gamma(N)  =    \Gb(\QQ)^{+} \cap K  $    is the usual subgroup of matrices in $ \SL_{2}(\ZZ) $ that reduce   to identity modulo $ N   $ and we have an      embedding 
\begin{equation}   \label{twistbetaf}   
\begin{split} \Gamma(N) \backslash  \mathcal{H}^{+}     &     \hookrightarrow    \mathcal{S}_{K} (\CC)   ,    \quad  \quad    
 \Gamma(N)   \tau        \mapsto   [x_{\tau},  1 ]_{K}     .
 \end{split}    
\end{equation}    

By \cite[Theorem 1.5.1(c)]{Diamondmodular}, the  moduli space for $ \Gamma(N) \backslash \mathcal{H}^{+} $ is the set of isomorphism classes of elliptic curves $ A $ together with a basis $ (P, Q) $ for $ A[N] $ such that the Weil pairing sends the $ (P, Q) $ to $ e^{2\pi i  /N} $. It  identifies  with  the set   $$ \left \{  [     \CC/\Lambda_{\tau} ,  (\tau / N + \Lambda_{\tau} ,  1/N  +   \Lambda_{\tau} ) ]   \mid   \tau \in  \mathcal{H}   ^ {    +    }    \right  \}  : =   \mathrm{S}(N)  $$  via the obvious   map    
\begin{align*} \psi  _ {  N   } :   \mathrm{S}(N)   & \to  \Gamma(N) \backslash  \mathcal { H } ^{+ }  ,   \quad \quad 
[\CC/\Lambda_{\tau} ,   (\tau / N + \Lambda_{\tau} ,  1/N  +   \Lambda_{\tau} ) ]  %  &  
\mapsto   \Gamma(N)  \tau   .       
\end{align*} 
The embedding    (\ref{twistbetaf})  then    extends to a commutative  square       
\begin{equation*}  \label{diagram}       
\begin{tikzcd}[sep  =  large]        \mathrm{S}(N)    \arrow[r, hookrightarrow,  "\theta"]   \arrow[d, "\psi_{N} "']    &  \mathcal{E}(N)/   \!   \!    \sim       \arrow[d,  " \Psi_{N}    "]       \\ 
\Gamma(N)  \backslash   \mathcal{H}^{+}      \arrow[r, hookrightarrow]   &   \mathcal{S}_{K}(\CC)       
\end{tikzcd}    
\end{equation*}   
where the top   horizontal  map is \begin{equation}   \label{thetamap}    \theta :     [ \CC/\Lambda_{\tau}, (\tau/N + \Lambda_{\tau} ,  1/N + \Lambda_{\tau}  )  ]   \mapsto  [    A_{\tau}     ,  \nu_{\bar{\beta}_{f}}    )    ]  .  
\end{equation}    
That is, $ \theta $ sends the pair $ (A, (P, Q)) $ to the pair $ (A, (e_{1}, e_{2}) ) $ where $$ e_{1} = \nu(1,0) = Q , \quad    \quad    \quad      e_{2}  =     \nu(0, 1) = - P .   $$   
Recall also that  $ \Gamma(N)  \backslash  \mathcal{H}^{+} $ has a natural   \emph{left}    action of $ \gamma \in \SL_{2}(\ZZ/N\ZZ) $  given by  $$  \Gamma(N) \tau  \mapsto   \Gamma(N) \gamma(\tau)  $$  which agrees via $ \psi   _ {  N   }      $    with the obvious left action on $     \mathrm{S}    (N) $ that replaces $ \tau $ with $ \gamma(\tau) $   everywhere.  On the other hand,  the map $ \Psi_{N}   $   (\ref{moduliisom})      is equivariant with respect to the \emph{right} action of $ \GL_{2}(\widehat{\ZZ})/ K \simeq  \GL_{2}(\ZZ / N \ZZ ) $ and   its   subgroup $ \SL_{2}(\ZZ / N \ZZ) $ preserves the component of $ \mathcal{S}_{K}(\CC) $   indexed by $ 1 $. The reader is invited to check that the horizontal maps intertwine the  action   of $ \gamma \in \SL_{2}(\ZZ / N \ZZ)  $ on the domain with the action of $ \gamma^{-1} $ on the target    
i.e.,    $$  \theta( \gamma \cdot   (  -   ) ) =  \theta( - )  \cdot \gamma^{-1} .  $$   
See  also   Lemma     \ref{translation}. 
\end{example} 
\begin{remark}   \label{diffembed}  If we instead   use    $ \beta_{f} =    \left (  \begin{smallmatrix} & 1 \\ -1  \end{smallmatrix}  \right )  \in \GL_{2}(\ZZ)  \hookrightarrow \GL_{2}(\Ab_{f})  $ in the discussion  above, then   the  twisted  intersection  $ \Gb(\QQ)^{+} \cap \beta_{f} K \beta_{f}^{-1} $ is still the group      $ \Gamma(N) $. The  embedding 
\begin{align*}  \Gamma(N)  \backslash  \mathcal{H}^{+}   &    \hookrightarrow    \mathcal{S}_{K}(\CC) ,  \quad   \quad     
\Gamma(N) \tau       \mapsto [x_{\tau},  \beta_{f}]_{K}   
\end{align*}    now corresponds to  the   map      $$  \theta_{1} : \mathrm{S}(N)  \hookrightarrow  \mathcal{E}(N)/\!\! \sim   $$ that    sends the class of a pair $ (A, (P, Q)) $ to the class of $ (A, \nu) $ where $$   e_{1}   = \nu(1, 0) = P,   \quad   \quad   \quad         e_{2}   =     \nu(0, 1) = Q .   $$ 
One can check  that   $ \theta_{1} $ intertwines the action of $ \gamma \in \SL_{2}(\ZZ / N \ZZ ) $ on the domain with that of $   {}^{t}      \negsmall      \gamma   $ on the target.   
\end{remark} 

\begin{example} \label{Gamma0(N)}   For an integer $ n \geq 1 $, let $ L_{f, n} $ be the $ \widehat{\ZZ} $-lattice in $ \Ab_{E,f} $ spanned by $ \omega_{1} $ and $      n \omega_{2} $. We let $ \Gb(\Ab_{f}) $ act on the left of   $$ \Ab_{E,f} = \Ab_{f} \omega_{1} \oplus \Ab_{f} \omega_{2} \simeq \mathrm{Mat}_{2 \times 1} (\Ab_{f})   $$       
by left matrix multiplication.  Fix an integer $ N \geq 1 $ and let  $ 
K = \widehat{\Gamma}_{0}(N) $ be the set of all $ g \in \Gb(\Ab_{f}) $ such that $ g L_{f, 1} = L_{f, 1} $ and $ g L_{f, N} = L_{f, N }$. Then  
$$      
K =  \left\{ \left ( \begin{smallmatrix} a & b \\ c & d \end{smallmatrix}  \right  )  \in \GL_{2}(\widehat{\ZZ}) \,\middle|\, c \equiv 0 \pmod{N} \right\}.  
$$  
In this case, $ \det(K) = \widehat{\ZZ}^{\times} $ and $ \QQ_{>0}^{\times} \backslash \Ab_{f}^{\times} / \det(K) $ is a singleton. Thus  we have an identification     
\begin{equation}  \label{stdidenGamma0} 
\begin{split} 
 \Gamma_{0}(N) \backslash \mathcal{H}^{+}     &  \MapsTo   \mathcal{S}_{K}(\CC)  ,   \quad  \quad     \Gamma_{0}(N) \tau          \mapsto  [x_{\tau},   1]    %  
 \end{split}
 \end{equation}    
where $ \Gamma_{0}(N) = \SL_{2}(\QQ) \cap  K    $  is     the usual subgroup of matrices in $ \SL_{2}(\ZZ) $ whose reduction modulo $ N $ is upper triangular.  For a  free $ (\ZZ/ N\ZZ)$-module $   T $     of rank $ 2 $, the $ K / K_{N} $ orbit of an isomorphism $  \nu :  (\ZZ/ N  \ZZ)^{2}  \to    T     $ is uniquely determined by the data  of  the  rank $1$ sub-module $$  (\ZZ/ N\ZZ) \cdot  \nu(1,0) $$ spanned
by the   \emph{first}      basis element $ e_{1}  =  \nu(1, 0) $.\footnote{In the notation of \cite[\S 1.5]{Diamondmodular}, the map $   \mathrm{S}(N) \to \mathrm{S}_{0}(N)$ sends the class  of  $ (A, (P, Q)) $ to  that of $(A, \langle Q \rangle )$, i.e., the basis for $ A[N]   (   \CC   )   $    is mapped to the line spanned by the   \emph{second}   basis element, which is consistent with (\ref{thetamap}).} As $ \nu $ runs over all the set of all possible isomorphisms,  the  rank one sub-modules runs over  all cyclic subgroups of $   T    $ of order $ N   $. We recognize the curve $ \mathcal{S}_{K} $ as the smooth  geometrically connected affine modular curve commonly denoted as  $ Y_{0}(N) $,  which is a Zariski open subset of the   smooth     projective curve $ X_{0}(N)  $   from the introduction.

We also  observe    that since $ \Gamma_{0}(N) $ contains $ -1 $ for all $ N $,  $ Y_{0}(N) $ is only a \emph{coarse moduli space}. In fact, the image of $ \Gamma_{0}(N) $ in $ \SL_{2}(\ZZ)/\left \{ \pm   1  \right \} $ can also contain torsion elements for arbitrarily large $ N $. For instance, if we take $ N = a^{2} - a + 1 $ for some integer $ a  \geq 0$,   then     $$ \left ( \begin{smallmatrix} 1 \\ a & 1  \end{smallmatrix} \right )  \left (  \begin{smallmatrix}  &  -1 \\ 1 & \, \,   1  \end{smallmatrix}  \right  )   \left (  \begin{smallmatrix} 1 \\ -a & 1  \end{smallmatrix}  \right )   = \left (    \begin{smallmatrix} a  & - 1 \\ N  &   1 - a   \end{smallmatrix}   \right )  $$ 
is an order $6$ element of $ \Gamma_{0}(N) $. So $ \Gamma_{0}(N) $ is very far from being neat in general. 
\end{example}  

\begin{remark}    The group $ \Gamma^{0}(N) $ obtained by taking the transpose of the elements of $ \Gamma_{0}(N)$  gives another scheme $ Y^{0}(N)$ which is isomorphic to $ Y_{0}(N)$ and has the same moduli interpretation. However,    $ Y_{0}(N)(\CC)$ and $ Y^{0}(N)(\CC)$ are   \emph{not}     isomorphic as quotients of $    \Gamma(N)  \backslash   \mathcal{H}^{\pm} $   under the degeneracy maps induced by the inclusion of these groups in $ \Gamma(N)$.       
\end{remark}

\begin{example}  \label{Gamma1(N)} For $ N \geq 1 $, let  $$        
K =  \widehat{ \Gamma   }_{1}(N) =  \left\{  \left (  \begin{smallmatrix} a & b \\ c & d \end{smallmatrix}  \right  )    \in \GL_{2}(\widehat{\ZZ}) \,\middle|\,   a   \equiv  1,  c  \equiv 0\pmod{N} \right\}  
$$ 
Again, $ \det(K) = \widehat{\ZZ}  ^ {  \times   }    $ and  we have an identification   \begin{equation}   \label{stdidenGamma1} 
\begin{split} \Gamma_{1}(N) \backslash  \mathcal{H}^{+}    &  \MapsTo   \mathcal{S}_{K}(\CC)   ,  \quad   \quad    
\Gamma_{1}(N)\tau     \mapsto [x_{\tau}, 1]   
\end{split}   
\end{equation}    where $ \Gamma_{1}(N)  =   \SL_{2}(\QQ)  \cap   K    $ is the usual congruence subgroup of matrices in $ \SL_{2}(\ZZ) $ that reduce to $  \left ( \begin{smallmatrix} 1 & * \\ & 1  \end{smallmatrix}   \right   )     $ modulo $ N $. Given  a   free    $ (\ZZ/N\ZZ)$-module   $  T   $  of rank $ 2 $,  the $   K  /   K(N)    $-orbit of an isomorphism $ \nu :  (\ZZ/ N\ZZ) ^{2}  \to    T       $ is uniquely determined by the   \emph{first}      basis element  $$  e_{1}    =  \nu(1, 0)  .   $$     %\footnote{The reader should compare the effect of the corresponding map $ \mathrm{S}(N) \to \mathrm{S}_{1}(N)$ in \cite[\S 1.5]{Diamondmodular}.}  
We  recognize  the  curve $ \mathcal{S}_{K} $  as   the smooth geometrically  connected  affine curve over $ \QQ $ commonly  denoted by $ Y_{1}(N) $, which  parametrizes isomorphism classes of elliptic curve with a point of exact order $ N $. If $ N \geq 4 $, $Y_{1}(N)$ is a \emph{fine moduli space}.   
\end{example}   
\begin{remark}  \label{Gammaquot} We  note  for later    that $   \widehat{\Gamma}_{1}(N) \trianglelefteq     \widehat{\Gamma}    _{0}(N) $ and the quotient   is isomorphic to $   (  \widehat{\ZZ} /  N  \widehat{\ZZ}  ) ^{\times} =  ( \ZZ / N \ZZ )^{\times} $. The isomorphism is obtained  by extracting the top left entry of matrices in  $ \widehat{\Gamma}_{0}(N)$.     
\end{remark}    

\begin{remark}  \label{Gamma1'(N)}  The  interested reader may also wonder about the group 
  $$  K' = \widehat{\Gamma}_{1}'(N) =  \left \{   \left (        \begin{smallmatrix} a & b \\ c & d \end{smallmatrix}  \right  )    \in \GL_{2}(\widehat{\ZZ}) \,\middle|\,   d  \equiv  1,  c  \equiv 0\pmod{N}   \right \}   $$
which also satisfies $ \det(K') = \widehat{\ZZ}^{\times} $,   $\SL_{2}(\QQ)  \cap K'  =  \Gamma_{1}(N) $, and therefore  identifies $Y_{1}(N) $ with $ \mathcal{S}_{K'} $. 
However, the  moduli interpretation for $ \mathcal{S}_{K'}(\CC) $   is the set of isomorphism classes of triples $ (A, C, e + C) $ where $ A $ is an elliptic curve, $ C    \subset   A  (   \CC   ) $ is a cyclic subgroup of order $ N $ and $ e+C $ is a point of order $ N $ in $  (A / C)(\CC)     $.  To explain this discrepancy, denote   $ K = \widehat{\Gamma} _{1}(N) $ and   let      
\begin{equation}   w_{N} : =    \left (  \begin{smallmatrix} & -1 \\ N    \end{smallmatrix}   \right  )  \in 
\Gb(\QQ)^{+}   .
\end{equation}  
Then $w_{N}^{2}  \in \mathbf{Z}(\QQ) $ and $ w_{N} K  w_{N}^{-1} =  w_{N}^{-1} K w_{N}  =  K'  $.  This gives us a commutative diagram 
\begin{equation}   \label{atkin}      
\begin{tikzcd}[sep = large]      Y_{1}(N)   \arrow[r, "{j}"]   \arrow[d, "W_{N}"']    &       \mathcal{S}_{K}   \arrow[d, "{[w_{N}]_{K}}"]  \\
Y_{1}(N)    \arrow[r, "{j'}"]   &   \mathcal{S}_{K'}
\end{tikzcd}   
\end{equation}   
where $ j, j' $ are induced  by $  \tau \mapsto (x_{\tau} ,  1)  $ and $ W   _ {  N   }     $ is the \emph{Fricke involution} induced by $$ \mathcal{H}^{+} \to \mathcal{H}^{+}  ,  \quad  \tau \mapsto -1/(N\tau)  .    $$    
In the  moduli-theoretic terms,  the   effect of $ W_{N}    $ is via $ [A, Q] \mapsto [A/\langle Q \rangle , P + \langle Q \rangle ]  $,   where $ P \in A[N](\CC)     $ is any  point  that  satisfies the Weil pairing relation $ \langle P, Q \rangle = e^{2\pi i/N} $, where our pairing is normalized as  in \cite[p.~80]{Diamondmodular}, i.e., the basis of $ A_{\tau}[N]   (\CC)    $ corresponding to $ (\tau, 1) $ (or $(1, -\tau)$) is paired to $ e^{2\pi i/N} $.     Similarly,     $ j' $    sends $ (A', Q') $ to $ (A', \langle Q' \rangle, P'  +  \langle Q   '  \rangle  ) $ where $ P' \in A'[N]  (  \CC   )     $ is any point that satisfies $ \langle P',   Q' \rangle = e^{2\pi i/N }$.  
\end{remark} 

We end this   subsection by recording the following result, which makes the effect of degeneracy and twisting maps more explicit for  geometrically  connected    modular  curves. For a level $ L $, let $ \Gamma_{L} $ denote the intersection  $ \Gb(\QQ)^{+}  \cap  L $. We call   $$    \Gamma_{L}   \backslash  \mathcal{H}^{+}    \hookrightarrow     \mathcal{S}_{L}(\CC),  \quad  \quad   \Gamma_{L} \tau  \mapsto  [\tau, 1]_{L}  $$ 
the \emph{standard embedding}.    

\begin{lemma}  \label{translation}  Let $ L $, $ K $  be   two levels of $ \Gb(\Ab_{f}) $  such that  $  \mathcal{S}_{L} $ is geometrically connected and let $ g \in \Gb(\Ab_{f}) $ be  an element such $ K'= g^{-1}  K g  $ contains $ L  $.  %
Then   % $ \Gb(\QQ)^{+} \cap g^{-1}K  $ is non-empty and 
the composition $ [g^{-1}]_{K'} \circ \pr_{L, K'}  :  \mathcal{S}_{L}  \to \mathcal{S}_{K}$ on $ \CC $-points  is identified via the standard embeddings     with 
\begin{align*} \Gamma_{L}  \backslash  \mathcal{H}^{+}  & \to   \Gamma_{K}   \backslash  \mathcal{H}^{+} ,  \quad  \quad   
\Gamma_{L}  \tau    \mapsto   \Gamma_{K} q  \tau 
\end{align*} for any element $ q \in  \Gb(\QQ)^{+} \cap  K  g    $.  %In particular, the twisting isomorphism $ [g]_{K'}(\CC)  $ for any $ g \in \Gb(\QQ)^{+}$ is induced by the left action of $ g^{-1} $ on $ \mathcal{H}^{+}$.       
\end{lemma} 
\begin{proof}   Since $ \mathcal{S}_{L} $ is geometrically connected, so is $ \mathcal{S}_{K'} $ and $ \Gb(\QQ)^{+} \backslash \GL_{2}(\Ab_{f})/ K' $ is a singleton. In particular, $ 1  \in    \Gb(\QQ)^{+} g  K   '   = \Gb(\QQ)^{+} K  g    $. So we can write $ 1 = q^{-1} k g $ for some $ q \in \Gb(\QQ)^{+} $  and   $ k \in K $. Then $ q \in  K g $ and  $$   K'  =  q  ^{-1}  K    q  , \quad    \quad      [g^{-1}]_{K'}(\CC)  = [q^{-1}]_{K'} (\CC)  .  $$   Now  $ \Gb(\QQ)^{+} \backslash \Gb(\Ab_{f})/ K $ is a singleton as well  since  $ \mathcal{S}_{K'} \simeq   \mathcal{S}_{K}   $. So we know that  $$ \Gamma_{\star}\backslash \mathcal{H}^{+} \to \mathcal{S}_{\star}(\CC), \quad \quad   \Gamma_{\star}\tau \mapsto [\tau, 1]_{\star} $$ is an isomorphism   for each $ \star \in \left \{ K, K',   L    \right \} $.  
Using this, we see that $ \pr_{L, K'} $ is identified with $ \Gamma_{L} \tau \mapsto  \Gamma_{K'} \tau $  and    $ [g^{-1}]_{K'}(\CC)  =  [q^{-1}]_{K'}(\CC)  $ is identified with $   \Gamma_{K'} \tau \mapsto  \Gamma_{K}  q  \tau $.   
\end{proof}  

\subsection{CM  points}
\label{CMpointsubsection}   We now describe certain algebraic points on the modular curves that determine the ``canonicity" of the model $ \mathcal{S}_{K} $ in the  Deligne-Shimura   formalism.    

Let  $  P = [x, g]_{K}  \in \mathcal{S}_{K}(\CC) $ be a point. We say that $ P $ has \emph{complex multiplication}  (CM) by $ E  $   if one (and therefore any) pair $ (A, \eta) $ representing the class in $   ( \mathcal{E}/\! \! \sim)/ K  $ attached to   the  point     $  P  $  under   (\ref{moduliisog}) satisfies   $$   \mathrm{End}(A) \otimes_{\ZZ} \QQ = E.   $$    
If $ \tau  \in \mathcal{H}^{\pm} $ corresponds to $ x $  under  (\ref{variousiden}),    the     associated elliptic curve  $  A_{\tau} $  with $ \CC $-points $   \CC /  \Lambda_{\tau}   =       \CC / (\ZZ + \ZZ \tau ) $ has CM by $ E $ if and only if $ \QQ[\tau] = E $ \cite[Proposition 3.17]{MilneElliptic}.  Suppose this is the case.    Let $ g_{\tau} \in \Gb(\RR) $ denote the change of coordinates matrix from $ (1, -i) $ to $ (1, -\tau ) $. It is  easy to check that  $ \tau  = g_{\tau} \cdot i  $,  so that   
\begin{equation}   \label{gtau}    x =  g_{\tau} h_{\mathrm{std}}  g_{\tau}^{-1}   
\end{equation}    
Now if $ q \in \Gb(\QQ) $ denotes the change of coordinates matrix from $ (1, -\tau) $ to $ (1, \omega_{2}/\omega_{1}  ) $, then $  g_{0} =  q g_{\tau} $ is the matrix in (\ref{g0}).   So     $$   x  = g_{\tau} h_{\mathrm{std}} g_{\tau}^{-1}  = q ^{-1} h_{0}    q   $$
which implies that $ P  = [h_{0}, q   g]_{K} . $ Since $ \QQ[\tau] = \QQ[\bar{\tau}]$, the change of coordinates matrix from $ (1, -\tau) $ to $( 1, - \bar{\tau}    ) $ is in $ \Gb(\QQ) $ and  it  easily  follows that  $$ \bar{\tau} \in  \Gb(\QQ) \tau  .   $$     So   we  can also write    % and we can write 
$ P = [\bar{h}_{0}, g']_{K} $ for some $ g ' \in \Gb(\Ab_{f}) $.    Thus the set of points on $ \mathcal{S}_{K}(\CC) $ with CM by $ E $   is    % is  exactly     
\begin{equation}   \label{pkc}     \mathcal{P}_{K}   :  =   \left \{ [h_{0}, g]_{K}    \, | \,   g \in \Gb(\Ab_{f})  \right \}  =    \left \{ [\bar{h}_{0}, g]_{K}    \, | \,   g \in \Gb(\Ab_{f})  \right \}  
\end{equation}   
Lemma \ref{uniqueCMpoint} characterizes  the points $ h_{0}, \bar{h}_{0}\in \mathcal{X}_{\mathrm{std}}  $ in   terms of the morphism  (\ref{embedding}).

Since $ E $ is fixed in our discussion, we will refer to elements of $ \mathcal{P}_{K}  $ simply as \emph{CM points}.  We  observe  that $ \mathcal{P}_{K} $ depends only on the $  \Gb (\QQ) $-conjugacy class of $ \varphi $ (\ref{varphi}). %  
Indeed,  if we replace $ \varphi $ by $ q \varphi q ^{-1} $ for $   q \in   \Gb    ( \QQ ) $, then we  end up replacing $ h_{0} $ with  $  q h_{0} q^{-1} $.    Thus the set of points on $ \mathcal{S}_{K} $ that have CM by $ E $ depends only on the datum    (\ref{SDdatum}).    

We may  also  reinterpret  the set of CM-points as the images of all possible  \emph{twisted  embeddings}
\begin{equation}    \label{Deligneig}    
\begin{split}   \iota    _ {  g   }  (\CC)  : \mathcal{T} _{ H_{g}   } (  \CC  )   &      \hookrightarrow  \mathcal{S}_{K}   (  \CC  )    \\
    [ h]   &      \mapsto  [   h_{0}   ,     h    g  ] _{ K   }       
    \end{split}    
\end{equation}   
where $  H_{g} = H_{g, K}    :   =   \Hb(\Ab_{f}) \cap  g  K  g  ^  { - 1   }  $.    Each  $ \iota_{g}(\CC) $ is a morphism of underlying $ \CC $-schemes and the theory of  canonical models stipulates that it descends to a morphism $$ \iota_{g} : \mathcal{T}_{H_{g}} \to \mathcal{S}_{K, E} $$ of $ E $-schemes \cite[Corollary 5.4]{DeligneTS}. Hence the images $ [h_{0}  ,    h    g] $ are algebraic points on $ \mathcal{S}_{K}(\CC) $ whose field of definition can be computed using the explicit Galois action prescribed in \S \ref{canonicalmodsec}. More precisely,  if   $ \sigma \in  \Gal    (   E ^  { \mathrm{ab} }  /  E   )  $  and  $   h \in \Ab_{E, f}^{\times} $ is such that $  \Art _{E}   ( h )    =  \sigma $ under (\ref{CFTforEnoC}), then 
\begin{equation}  \label{GaloisonCM} \sigma [ h_{0}   , g  ]   _ { K }     =     [   h_{0}     ,   h   g  ]  _ { K  }  .     
\end{equation} 
Thus  $  [h_{0}, g]  _ { K  }   $ is defined over the field $ E _ { H_{g  }  }  $ that is associated  with  the group  $  H _ { g}   \subset  \Hb    ( \Ab _ {f }   ) $ via   (\ref{CFTforEnoC}), namely the fixed field of the subgroup $  \mathrm{Art}_{E}    (E^{\times} \backslash E^{\times} H_{g})    \subset  \Gal(E^{\mathrm{ab}} / E )$. The $ \Gal(\overline{\QQ}/E) $-orbit of $ [h_{0} , g] _ {  K   }  $ is then identified with the Galois set $ \mathcal{T}_{H_{g}}(\CC) $.   

\begin{remark} Suppose $ K $ contains the subgroup of $ \widehat{\ZZ}^{\times} $ of diagonal matrices in $ \GL_{2}(\widehat{\ZZ})$, e.g., $ K  = \widehat{\Gamma}_{0}(N) $.    Then     so    does $ H_{g}   $. Therefore the field $ E_{H_{g}} $   
is  fixed  by   the image of the  Verlagerung map $$ \mathrm{Ver} :   \Gal(\QQ^{\mathrm{ab}}/\QQ)   \to  \Gal(E^{\mathrm{ab}}/E) .   $$     
 Any such extension $ F $ of $ E $ is Galois over $ \QQ $ and its Galois group over $ \QQ $ is \emph{generalized dihedral}, i.e.,  the conjugation action of $ \Gal(E/\QQ)$ on $ \Gal(F/E) $ is via inversion and we have an isomorphism $$ \Gal(F/\QQ) \simeq  \Gal(F/E)  \rtimes   \Gal(E/\QQ)  $$
corresponding to each choice of a section of $ \Gal(F/\QQ) \to \Gal(E/\QQ) $. We refer the reader to \cite[\S 3.2]{Lars} for  more  detailed  results describing various interrelated extensions of this type.    
\end{remark}

\begin{note}  \label{DeligneCM}  In Deligne's formalism, the canonical model for the $ \CC $-scheme $ \mathcal{S}_{K, \CC} $ described by (\ref{Shimuravar}) is defined to be a scheme $ M_{K}  $ over the reflex field $ \QQ $ such that 
\begin{itemize} 
\item there are isomorphisms  $$  M_{K}  \times_{ \mathrm{Spec} \,  \QQ} \mathrm{Spec} \,  \CC \simeq \mathcal{S}_{K, \CC}  $$  that are ``compatible" for varying $ K$, and 
\item for every imaginary quadratic field $ E $ and $ g \in \Gb(\Ab_{f}) $,  there exist $ E $-scheme morphisms  $$ \iota_{g} : \mathcal{T}_{H_{g}} \to   M    _{K}   \times_{\Spec   \QQ}  \Spec   E   $$     (where the $ E$-scheme structure on $ \mathcal{T}_{H_{g}} $ is determined by  (\ref{reciprocitylaw})) whose base change to $ \CC $ is given by $ \iota_{g}(\CC) $ (\ref{Deligneig}) on $ \CC $-points.   
\end{itemize} 
For the precise meaning of the word “compatible,” see \cite[\S 3]{DeligneTS}.      
Deligne’s axiomatic characterization of the canonical models $ ( M_{K} )_{K}     $     allows the arithmetic properties of these varieties across all levels to be packaged in an efficient and elegant way. The \emph{existence} of the canonical model of modular curves, however, is    still     established by studying the moduli functors of elliptic curves with level structure. From  this    perspective, the Galois action described in~(\ref{GaloisonCM}) is essentially the theory of complex multiplication in disguise.
\end{note}

\begin{remark}   \label{weirdCMdivisorremark}  For the alternative  datum  (\ref{weirdShimura}), note that   the   embedding $  \iota ' $ given by the transpose inverse of the embedding  (\ref{iotaembedding})  also upgrades to a morphism $$ \iota'   :   (\Hb, \left \{ h_{0} \right \} ) \to  (\Gb ,  \mathcal{X}_{\mathrm{std}}')   $$ 
of  Shimura data.    
Let $ \mathcal{P}_{K}'   \subset \mathcal{S}_{K}'(\CC)  $ be the set of points that  have CM by $ E $ under the moduli interpretation mentioned   in  Remark  \ref{weirdmoduli}. Then $$ \mathcal{P}_{K}' =    \phi _ {{}^{t}   \negsmall K}  (\mathcal{P}_{{}^{t}   \negsmall    K})  =  \left \{ [h_{0}',  g]_{K} \, | \, g \in \Gb(\Ab_{f})  \right \} $$ 
where $ h_{0}' = \iota'_{\RR} \circ h_{0} \in \mathcal{X}_{\mathrm{std}}' $.  

To  see   that      the     map $ \phi' $ (\ref{veryweirdiso})   does not respect Galois actions, let us assume for simplicity that $ E = \QQ(i) $, and $ (\omega_{1}, \omega_{2}) = (1, -i) $, so that $ h_{0}  =  h_{\mathrm{std}}'  $ and $ h_{0}' = h_{\mathrm{std}}' $.  Then the map $ \phi'_{K}(\CC) $ given in  (\ref{veryweirdiso})  restricts to   
\begin{equation}   \label{weirdCMmap}   \mathcal{P}_{K}  \to   \mathcal{P}_{K}  '  ,  \quad  \quad     [h_{0}, g]_{K} \mapsto [h_{0}', g]_{K}  
\end{equation}   Now the theory of canonical models  requires that   \begin{align*} \iota'_{g}  :   \mathcal{T}_{H_{g}}(\CC)    \hookrightarrow  \mathcal{S}_{K}'(\CC) , \quad  \quad    
[h]  \mapsto  [h_{0}   '    ,   \iota'(h)    g]_{K}  
\end{align*}   
respects the  Galois action determined by the reciprocity law  (\ref{reciprocitylaw}). Since  the Galois action on $ \mathcal{P}_{K}$ is via (\ref{GaloisonCM}) and since  $    \iota(h)   \neq   \iota'(h)   $  in  general,  the mapping  (\ref{weirdCMmap})    cannot be    Galois  equivariant.      
\end{remark}   
  
\subsection{Heegner   points}   \label{Heegnersec}      Let us now  connect the  general CM points defined  in   \S \ref{CMpointsubsection}  with   the    Heegner points   from the introduction.     Suppose for all of this subsection  that $ K = \widehat{\Gamma}_{0}(N) $ for some $ N \geq 1 $ as defined in   Example      \ref{Gamma0(N)}.    
Fix a point $ P =  [   h_{0}       , g ]_ { K  } \in \mathcal{P}_{K} $. 
Then  $ H_{g}  =  \Hb(\Ab_{f}) \cap gKg^{-1} $ equals  
%The elliptic curve $  A_{q x_{\iota}} $ has endomorphism ring  
%
$$ \mathrm{Stab}_{\Hb(\Ab_{f})}(g L_{f,1}) \cap \mathrm{Stab}_{\Hb(\Ab_{f})}(g L_{f,N})
$$
where $ \Hb(\Ab_{f}) = \Ab_{E,f}^{\times} $ acts on the $ \widehat{\ZZ}$-lattices $ g L_{f,1}$ and $ g L_{f,N} $ inside $ \Ab_{E,f} $ by multiplication. % 
It follows that $ H_{g} $ is the group of units of the ring
$$
\widehat{\mathcal{O}}_{P} : = \left\{ a \in \Ab_{E,f} \, | \,  a   \cdot g L_{f,1}  \subseteq  g L_{f,1} \text{ and } a  \cdot   g L_{f,N} \subseteq g L_{f,N} \right\}   .
$$
It is not hard to see that $ \widehat{\mathcal{O}}_{P}$ equals the product (over all primes $ \ell $) of compact open subrings $ \mathcal{O}_{P, \ell}$  of $ E_{\ell} := E\otimes_{\ZZ}  \QQ_{\ell} $ that properly contain $ \ZZ_{\ell}$. Since $ \mathcal{O}_{\ell} :=  \mathcal{O}_{E} \otimes _{\ZZ} \ZZ_{\ell} $ is the unique maximal compact open subring of $ E_{\ell }$,  we see that $\mathcal{O}_{P, \ell} \subseteq  \mathcal{O} _{\ell} $. Thus   $ \widehat{\mathcal{O}}_{P} $ is a compact open subring of $  \widehat{\mathcal{O}}_{E} = \mathcal{O}_{E} \otimes \widehat{\ZZ}  $ that properly contains $ \widehat{\ZZ}$.    Since $ E $ is dense in $ \Ab_{E, f} $,  the intersection $$ \mathcal{O}_{P}  =   E  \cap \widehat{\mathcal{O}}_{P}   $$
is dense in $ \widehat{\mathcal{O}}_{P} $ (i.e.,   $ \widehat{\mathcal{O}}_{P}  =  \mathcal{O}_{P}  \otimes_{\ZZ}  \widehat{\ZZ} $),  and  we   have     $  \ZZ  \subsetneq  \mathcal{O}_{P} $.  The  upshot  is  that    $ H_{g}$ is the group of units of the profinite completion of  an   \emph{order} in $ \mathcal{O}_{E} $ (i.e., a subring of $ \mathcal{O}_{E} $ of rank $ 2 $ over $ \ZZ $)  and $ P $ is defined over the \emph{ring class extension of $ E $ associated    with      $ \mathcal{O}_{P}$}. Similarly,  the    compact open subrings $$ \widehat{\mathcal{O}}_{P}^{\dagger} :=  \left \{ a \in \Ab_{E, f} \, | \, a  \cdot g L_{f, 1}   \subseteq   L_{f, 1  }  \right \}  ,  \quad   \quad   \widehat{\mathcal{O}}_{P}^{\ddagger}  : = \left \{ a \in \Ab_{E, f} \, | \,  a   \cdot  g L_{f, N} \subseteq   g L_{f, N }  \right \}  $$ 
of $ \Ab_{E, f} $  respectively    arise        from orders $ \mathcal{O}_{P}^{\dagger} $, $ \mathcal{O}_{P}^{\ddagger} $ in $ \mathcal{O}_{E} $     obtained by taking   the  intersections of the adelic subrings with $ E  $.  Clearly,     $$  \mathcal{O}_{P}^{\dagger}  \cap  \mathcal{O}_{P}^{\ddagger}  =  \mathcal{O}_{P}  .    $$    Let us define    $$ \mathfrak{a}_{P} : =  g L_{ f, 1} \cap E  , \quad \quad   \mathfrak{b}_{P}  :  =  g L_{f,  N }  \cap  E.  $$
It is  straightforward to see  that   $ \mathfrak{a}_{P} $ and   $  \mathfrak{b}_{P}$ are proper (and therefore invertible) fractional ideals of $ \mathcal{O}_{P}^{\dagger} $ and  $ \mathcal{O}_{P}^{\ddagger} $   respectively.  Note that $ N \mathfrak{a}_{P} \subset \mathfrak{b}_{P}$ and   $   [  \mathfrak{b}_{P} :  N  \mathfrak{a}_{P} ] = N $.

\begin{lemma}   \label{CMX0N}      % The lattices $ \mathfrak{a}_{P}$, $ \mathfrak{b}_{P}$ are proper fractional id 
If $ A \to A' $ is the cyclic $ N $-isogeny representing the point $ P  $, then $ \mathrm{End}(A) = \mathcal{O}_{P}^{\dagger}$  and $ \mathrm{End}(A') =  \mathcal{O}_{P}^{\ddagger} $. Moreover, the point $P  $ can be represented by  the cyclic   $ N $-isogeny given on $ \CC $-points    by   $ \CC/N\mathfrak{a}_{P} \to  \CC/\mathfrak{b}_{P}    $, $ z +   N \mathfrak{a}_{P}  \mapsto   z   +   \mathfrak{b}_{P}$.    
\end{lemma}   
\begin{proof} Let $ (A, C) $ be the pair representing the class associated with $ P $,  where $ A $ is an elliptic curve and $ C $ is a cyclic subgroup of $ A(\CC) $ of order  $N $.   Recall that $ \tau_{0} $ (\ref{tau0})  denotes the point in  $ \mathcal{H}^{\pm} $ associated to $ h_{0} $.   Since $ \Gb(\QQ) K  =   \Gb (\Ab_{f})  $,   we can  write $ g =  q   \kappa $ for $ q \in  \Gb(\QQ) $ and $ \kappa \in K $ and  so $$ P  =  [  q ^{-1}  h_{0}    q  ,  \,   1    ]_{K}   .$$  Therefore,  $ A $ is isomorphic to the elliptic curve $ A_{\tau} $ where $ \tau  = q^{-1}  \tau_{0} \in  \mathcal{H}^{\pm} $ denotes the point associated to $ q ^{-1}   h_{0}     q     $.  Write  $ q    =   \left   (  \begin{smallmatrix}   a &  b \\  c  &  d   \end{smallmatrix}   \right   )        $   and   set  $  \varpi_{1}  = a \omega_{1} + c \omega_{2} $, $  \varpi_{2}      =   b\omega_{1} + d \omega_{2}  $.  Then    $ \mathfrak{a}_{P} = \ZZ \varpi_{1} + \ZZ  \varpi_{2} $ and   $$   \frac{\varpi_{2}}{\varpi_{1} }  =  \frac{d\tau_{0}-b}{c\tau_{0}-a} =  -  q^{-1} \cdot (\tau_{0}) =    -  \tau  .       $$ 
%and % By changing the basis $ (\omega_{1}, \omega_{2} ) $ (which changes neither $ K $, nor the set of CM points), we may assume that $ q = 1  $. 
 %and set  $ \tau := - (\omega_{2}/q\omega_{1} ) \in \mathcal{H}^{\pm }  $.  
So we see that 
\begin{align*}   \mathcal{O}_{P}   ^ { \dagger   }     &  =  \left \{ a \in \mathcal{O}_{E} \, | \, a   \cdot     \mathfrak{a}_{P}  \subseteq  \mathfrak{a}_{P}   \right \}   \\       
%\\ & =   \left \{ a \in \mathcal{O}_{E} \, | \, a    \cdot      q   (\ZZ  \omega_{1} + \ZZ  \omega_{2} ) \subseteq   q    (    \ZZ  \omega_{1}  +  \ZZ  \omega_{2}    )       %\omega_{2}   
%\right \}  \\
& =   \left   \{  a \in   \mathcal{O}_{E}  \, | \,  a   \cdot    (\ZZ   \varpi_{1}   + \ZZ   \varpi_{2}     )  \subseteq   \ZZ    \varpi_{1}      +   \ZZ    \varpi_{2}     \right \}   \\
& =    
\left   \{  a \in   \mathcal{O}_{E}  \, | \,  a   \cdot    \Lambda_{\tau}   \subseteq    \Lambda_{\tau} \right \}   \\
&   = 
\mathrm{End}(A_{\tau} ) . 
\end{align*}   
Now set $  \varpi_{1}'  = a \omega_{1} +   c N \omega_{2} $, $   \varpi_{2} '   =   b  \omega_{1} +  dN \omega_{2}  $.  Then $ \mathfrak{b}_{P} =  \ZZ \varpi_{1}'  +  \ZZ \varpi_{2} '  $  and     $  \varpi_{2}'  /    \varpi_{1}'   =  -N\tau   $    by  a   similar   computation.     From the discussion in Example  \ref{Gamma0(N)}, we see that $ (A, C) $ is isomorphic to  $  (A_{\tau}, C_{\tau}) $ where  $ C_{\tau}  = \langle 1/ N +  \Lambda_{\tau}   \rangle  $.  It  follows   that    $  A'  =    A/C $ is isomorphic   $ A_{N\tau} $  and      we    similarly deduce that 
\begin{align*}   \mathcal{O}_{P}^{\ddagger} 
& =   \left \{ a \in  \mathcal{O}_{E}  \, |  \,   a \cdot \Lambda_{N \tau}  \subseteq    \Lambda_{ N \tau  }   \right   \}      =    \mathrm{End}(A/C) 
\end{align*}
This proves the  first  claim.    
Since the isogeny $  A \to A/C $ is identified with the isogeny $ A_{\tau}  \to A_{N\tau} $  given on $ \CC $-points via  $$ \CC / \Lambda_{\tau}  \mapsto  \CC /  \Lambda_{N \tau} ,   \quad  z + \Lambda_{\tau}   \mapsto  N z  +  \Lambda_{N \tau} , $$    
the   second   claim    also    follows.   
\end{proof}

\begin{definition} We say the CM-point $P$ is a \emph{Heegner point}  if  $ \mathcal{O}_{P}^{\dagger} =  \mathcal{O}_{P}^{\ddagger} $.  The  endomorphism ring $ \mathcal{O}_{P}$ is then  called the \emph{order} of the Heegner point.     %  The \emph{order} of Heegner point $ P $ is the order $ \mathrm{\OO}_{P}$. 
\end{definition}
\begin{remark}  Suppose $ (A, C) $ represents a CM point $ P $ on $ \mathcal{S}_{K}(\CC)$.  Then  the   quotient $ A / C $ has endomorphism ring $ \mathcal{O}_{P}^{\dagger}  $ if and only if $ C  =    
A[\mathfrak{N}_{P}] $ for some invertible ideal $ \mathfrak{N}_{P}   \triangleleft \mathcal{O}_{P}^{\dagger}  $ (necessarily of index $ N $). For the maximal   order    $ \mathcal{O}_{E}$,  
ideals of index $ N $ exist   precisely when the discriminant $ D = \mathrm{disc}(E) $ (not assumed to be coprime to $ N $) can be written as $ B^{2} - 4 N A $ for integers $ A, B $ with $ \gcd(N, B, A) = 1 $ \cite[\S 2]{Gross}.   If  this is the case, then ideals of index $ N $ exist for all  orders in $ \mathcal{O}_{E}    $.    % A  sufficient criteria is the Heegner hypot 
\end{remark}   

For the next result,  we    assume  that the   \emph{Heegner hypothesis}  is satisfied, i.e., all primes dividing $ N $ are  split in $ E $.  For each $ \ell \mid N $, let $ \beta_{1} $, $ \beta_{2 }$ denote the two local idempotents in $  E _{\ell}   =   E\otimes_{\QQ} \QQ_{\ell}   \simeq  \QQ \oplus  \QQ_{\ell}     $ and let $ k_{\ell} \in  \Gb(\QQ_{\ell}) $ denote the change of coordinates matrix from $ (\beta_{1}, \beta_{2}) $ to $ (\omega_{1} \otimes 1,  \omega_{2} \otimes 1)  $. Define   
\begin{equation}   \label{gN}   g _{N}   \in  \Gb(\Ab_{f} ) 
\end{equation}    to be the element such that the component of $   g_{N}    $ at $ \ell $ is $ k_{\ell} $ if $ \ell \mid N $ and is $ 1 $   otherwise.       
\begin{lemma}   \label{Heegnerlemma}     The point $ [h_{0}, g_{N}]  _ {  K   }     $ is a  Heegner point of  maximal  order.      
\end{lemma}    
\begin{proof} For each $ \ell \mid N $, the map  $ k_{\ell} : E_{\ell} \to E _{\ell} $ is the $ \QQ_{\ell}$-linear map that sends $ \omega_{i} $ to $ \beta_{i} $. Hence, it sends the lattice $ \ZZ_{\ell} \omega_{1}  +    \ZZ_{\ell}  N   \omega_{2} $ to $ \ZZ_{\ell}  \beta_{1}  + \ZZ_{\ell}   N     \beta_{2} $.    It is  then  easy  to from this and Lemma  \ref{CMX0N} that    $ \mathcal{O}_{P} =  \mathcal{O}_{P}^{\dagger} =   \mathcal{O}_{P}^{\ddagger}  =  \mathcal{O}_{E} $.          
\end{proof}

\subsection{Adelic Hecke operators}   \label{Heckecorrsec}      A \emph{Hecke operator}  of level $ K $ associated with $ g \in \Gb(\Ab_{f}) $ is defined to be the characteristic function of the  double coset $ K g K $ and denoted $ \ch(KgK) $. That is, 
$ \ch(KgK) : \Gb(\Ab_{f}) \to \ZZ $ is the function $$ h \mapsto \begin{cases}  1 &  \text{if } h \in K g K \\ 
0 & \text{otherwise} 
\end{cases} 
$$ 
In particular, a Hecke operator is a compactly supported function on $ \Gb(\Ab_{f}) $ that   is     invariant under the left and right translation actions    of $ K $ on the domain. We denote the $ \ZZ $-module of all compactly supported $ K $-biinvariant functions by $$ \mathcal{H}_{\ZZ} (K \backslash \Gb(\Ab_{f})/ K) . $$
Clearly,  the   Hecke operators $ \ch(K g K ) $ for $ g $ running over  representatives  of $ K \backslash \Gb(\Ab_{f}) / K $ form a $ \ZZ$-basis for $ \mathcal{H}_{\ZZ}(K \backslash \Gb(\Ab_{f})  /  K) $.  This free  $ \ZZ $-module can  be endowed with a product operation known as \emph{convolution} as follows. Note that for each $ g \in \Gb(\Ab_{f}) $,  the  coset $ KgK/K $ is a finite set, since $ K g K $ is compact and the $K $-left cosets provide an open cover. Suppose  that $ KgK =  \sqcup_{i} \alpha_{i} K $ and $ K h K = \sqcup_{j} \beta_{j} K $ is a decomposition into left cosets.  We define the convolution operation   $   *   $     by  $$ \ch(KgK) * \ch(KhK)  : = \sum \nolimits _{ i , j }  \ch ( \alpha_{i} \beta_{j} K)  . $$
It is easy to see that the right hand side is independent of the choice of representatives $ \alpha_{i}$, $ \beta_{j} $ and the sum is a compactly supported function on $ \Gb(\Ab_{f}) $ that is $ K $ invariant under translations on both the left and the   right.  With the convolution operation, $ \mathcal{H}_{\ZZ}(K \backslash \Gb(\Ab_{f}) / K ) $ becomes a unital associative $ \ZZ$-algebra which is referred to as the \emph{Hecke algebra of level $K$}. Since $  K $ is fixed in our discussion, we will refer to  $  \mathcal{H}_{\ZZ}(K \backslash \Gb(\Ab_{f}) / K ) $ simply as the Hecke algebra.    Given an  operator $ \ch(KgK) $, its  \emph{transpose} is defined to be $$   \ch(K  g  K  ) ^ { t }  =   \ch(K g^{-1} K) .  $$    We can extend this operation $\ZZ$-linearly to the full Hecke algebra of level  $ K $, and it is easily  verified  that this induces   an    anti-involution on $  \mathcal{H}_{\ZZ}(K  \backslash   \Gb(\Ab_{f} )  /    K )    $.

\begin{remark} It is possible to define Hecke algebras in a more measure theoretic manner, e.g., see \cite[\S 4.1]{BushHenn} or \cite[\S 2.3]{CZE}. An alternative used in some sources (e.g., \cite[\S 3.4]{CorVastal})  is to consider certain endomorphisms of the $ \ZZ$-module $ \mathcal{C}_{\ZZ
}(\Gb(\Ab_{f})/K) $ of all right $ K$-invariant compactly supported functions on $ \Gb(\Ab_{f}) $.  This module has a left action of $ \Gb(\Ab_{f}) $ defined by $ g \cdot \ch(g_{1}K) = \ch(gg_{1}K )$ for $ g , g_{1} \in \Gb(\Ab_{f}) $ and one can consider the algebra $$\mathrm{End}_{\Gb(\Ab_{f})} \left ( \mathcal{C}_{\ZZ}(\Gb(\Ab_{f})/K)  \right ) $$ of 
all $ \Gb(\Ab_{f}) $-equivariant endomorphisms of $ \mathcal{C}_{\ZZ}(\Gb(\Ab_{f})/K) $.     Any such endomorphism is uniquely determined by its effect on $ \ch(K) $ and sends $ \ch(K) $ to an  element in  $  \mathcal{H}_{\ZZ}(K \backslash \Gb(\Ab_{f}) / K )$. The resulting $ \ZZ$-linear bijection gives an identification   $$ \mathrm{End}_{\Gb(\Ab_{f})} \left ( \mathcal{C}_{\ZZ}(\Gb(\Ab_{f})/K)  \right ) \simeq  \mathcal{H}_{\ZZ}(K \backslash \Gb(\Ab_{f}) / K )^{\circ}  $$
of $ \ZZ$-algebras, 
where $ \mathcal{H}_{\ZZ}(K \backslash \Gb(\Ab_{f}) / K )^{\circ} $ denotes the opposite algebra. See  \cite[\S 3.1]{VignerasBook}. 
\end{remark}

Recall that a divisor on an algebraic curve is a finite linear combination of its points. The group of complex divisors $ \ZZ\langle \mathcal{S}_{K}(\CC)\rangle $ admits  actions of $ \mathcal{H}_{\ZZ}(K \backslash \Gb(\Ab_{f}) / K ) $ via \emph{Hecke correspondences} in two possible ways.  Let $ g \in \Gb(\Ab_{f}) $ and denote $ L =  g^{-1} K g  \cap   K  $. Then we have a diagram of $ \QQ $-schemes 
\begin{equation}   \label{Heckecorrdiagram}    
\begin{tikzcd} &  \mathcal{S}_{L} \arrow[dl,  "\alpha"']   \arrow[dr, "\beta"] \\
\mathcal{S}_{K}   &  & \mathcal{S}_{K} 
\end{tikzcd}
\end{equation} 
where the  finite     maps $ \alpha $, $ \beta $ are defined on $ \CC $-points via 
\begin{align*} \alpha : [x, g_{1}]_{L}   &  \mapsto    [x , g_{1}]_{K} ,  \\ 
\beta : [x , g_{1}]_{L}   &      \mapsto [x , g_{1} g^{-1}]_{K}   
\end{align*}    
That is $$ \alpha = \pr_{L, K },   \quad \quad   \beta  =  [g^{-1}]_{K'}  \circ  \pr_{L,K'} $$
where $ K ' = g^{-1} K g $.    
The \emph{contravariant} and \emph{covariant} Hecke actions of $ \ch(K gK )$ on $ \ZZ \langle \mathcal{S}_{K}(\CC) \rangle $ are the maps    
$$  \ch(KgK)^{*} =  \beta_{*} \circ  \alpha^{*} , \quad \quad  \ch(KgK)_{*}  =  \alpha_{*} \circ  \beta^{*}  $$
respectively. Here, $ \alpha^{*}, \beta^{*} $ respectively denote the (flat) pullback of divisors induced by  $ \alpha $, $ \beta $ and  $ \alpha_{*}, \beta_{*} $ denote (proper)  pushforwards. The diagram   (\ref{Heckecorrdiagram}) can also be drawn as 
\begin{equation}   \label{Heckecorrdiagdiff} 
\begin{tikzcd}%[sep = large]  
  &   \mathcal{S}_{gLg^{-1} } \arrow[dl,"{\tilde{\alpha} = [g]}"']  \arrow[dr, "{\tilde{\beta} = \pr}"]       \\ 
\mathcal{S}_{K}&   &      \mathcal{S}_{K}    
\end{tikzcd}   
\end{equation} 
which allow us to define $$ \ch(K g   K )^{*} = \tilde{\beta}_{*} \circ  \tilde{\alpha}^{*} , \quad \quad  \ch(K g K ) _ { * } =   \tilde{\alpha}_{*}  \circ   \tilde{\beta}^{*}   .  $$
It is  clear   from these expressions that both contravariant and covariant Hecke actions depend only on  the class of $ g $ in $ K \backslash  \Gb(\Ab_{f}) /  K  $.   By replacing $ g $ with $ g^{-1} $ in (\ref{Heckecorrdiagdiff}), we recover diagram  (\ref{Heckecorrdiagram}) where the map $ \alpha $ (resp., $ \beta $) is drawn on the right (resp., left). It is  then  also          clear   that    
\begin{equation}  \label{duality} \ch(K g K)_{*} = \ch( K g ^{-1} K )^{*}  .  
\end{equation}  
The \emph{degree} of $ \ch(K g K)^{*} $ is defined to $ \deg(\beta)   = [ K : gK'g^{-1}] $ and that of $ \ch(KgK)_{*}$ to be $ \deg(\alpha) = [K : K'] $. Both of these  equal   $ | KgK/K|   $ by unimodularity of $ \Gb(\Ab_{f})$.      %Note that $ \alpha $ and $ \beta $ are not necessarily unramified. 
By Lemma \ref{pullback}, we find that 
\begin{align}  \label{badHecke1} \ch(K g K)^{*} \cdot [x, g_{1}] _ { K }   &  = \sum_{ \gamma \in K g^{-1} K / K } [ x , g_{1} \gamma ]  _ { K }  \\ 
 \label{badHecke2} \ch(K g K)_{*} \cdot [x, g_{1}]  _ { K } &  = \sum_{ \gamma \in K g K / K } [ x , g_{1} \gamma ] _ { K }  
\end{align} 
for all $ [x, g_{1}] \in \mathcal{S}_{K}(\CC) $.  
It is   easily verified from (\ref{badHecke1}), (\ref{badHecke2})  that  the contravariant action defines a \emph{left} action of $ \mathcal{H}_{\ZZ}(K  \backslash \Gb(\Ab_{f})/ K) $ on the group of divisors whereas the covariant action is a \emph{right} action. More precisely, $$  \ch(K h K)^{*}  \circ  \ch(K g K) ^{*}  = \big ( \ch ( K h K ) * \ch(K g K )  \big ) ^{*} $$ 
where the right hand side denotes the contravariant action of the convolution. 
If $ \mathcal{S}_{K} $ is  geometrically  connected,  then  so  is   $   \mathcal{S}_{ g^{-1} K g \cap K } $   and    one can use Lemma \ref{translation} to translate the effects of the aforementioned  Hecke operators   in terms of points on quotients of upper half plane.   
\begin{remark}    The expressions in (\ref{badHecke1}), (\ref{badHecke2}) can also be derived for certain special levels using the modular interpretation. See    \cite[Prop.~8, Prop.~9]{Rohrlich} 
\end{remark}

\begin{remark} Both covariant and contravariant actions are frequently  used in the literature, and it is important to pay attention to the conventions used in a given source, since results may depend   crucially   on this choice. See, for instance, \cite[\S 5.1]{RibetSerre}, where this distinction plays an important role.     We also refer the reader to \cite[p.~443]{Ribet}  and  \cite[\S 1.16]{NekovarCM} for a similar discussion of Hecke correspondences in the context of Jacobians of algebraic curves.  In the terminology of \cite{Ribet},  the action of $ \ch(KgK)^{*}$ would be in the ``Picard" convention and that of $ \ch(KgK)_{*}$ would be in the ``Albanese" convention.   
\end{remark}   

\begin{remark}   
We take this opportunity to   caution the reader that the expressions (\ref{badHecke1}), (\ref{badHecke2}) for the Hecke actions are somewhat peculiar to the case of zero cycles and do not generalize to cycles on higher dimensional Shimura varieties in the obvious way. See \cite[\S 1]{explicitdescent} for a discussion.   
\end{remark}

\subsection{Classical Hecke operators}   When working in the classical setting of quotients of the upper half-plane, one defines Hecke operators in a manner similar to \S\ref{Heckecorrsec}, except that only elements of the group $\Gb(\QQ)^{+}$ are used.   In the adelic setting, one prefers to work with Hecke operators corresponding to elements that are local at a prime.   The following two examples illustrate how one may express  some   important    classical operators in terms of local elements in $\Gb(\Ab_{f})$. In what follows, $\mathrm{diag}_{\QQ}(x,y)$ for a matrix $\left( \begin{smallmatrix} x \\ & y \end{smallmatrix} \right) \in \Gb(\QQ)$ denotes its image in $\Gb(\Ab_{f})$.

\begin{example}    \label{HeckeGamma0}   Suppose $ K = \widehat{\Gamma}_{0}(N) $ as in Example \ref{Gamma0(N)}. Let $ p $ be any prime  such that $ p \nmid N $ and  pick $$ g = 
\mathrm{diag}_{\QQ}(p, 1)   \in   \Gb(\Ab_{f})   .  $$
Then $   L = g^{-1} K g   \cap   K   =    \widehat{\Gamma}_{0}(Np)   $. 
So by Lemma \ref{translation} applied with $ q = g $, the  
diagram~(\ref{Heckecorrdiagram})   corresponds via the standard identification~(\ref{stdidenGamma0})  to the diagram  
\begin{center} 
\begin{tikzcd}  & Y_{0}( Np)   \arrow[dl,  "\alpha"']    \arrow[dr, "\beta"]       \\

Y_{0}(N)  &   &   Y_{0}(N) 
\end{tikzcd}
\end{center}
where the map $ \alpha $, $ \beta $ are respectively  induced by $ z \mapsto z $, $ z \mapsto pz  $ on $ \mathcal{H}^{+}$.      This is then  exactly the diagram in \cite[Ch.~5, \S7, p.~282]{MilneElliptic}. As in \emph{loc.~cit.},   we denote $  \ch(K g K)^{*} =  \beta_{*} \circ \alpha^{*}  $ 
by  $ T_{p}$. Note that  
$$ Kg^{-1}K = K \big ( \mathrm{diag}_{\QQ} (1, p) \cdot  \mathrm{diag}_{\QQ} (p, p)^{-1}\big )  K $$  
Since $ \mathbf{Z}(\QQ) $ acts trivially on $ \mathcal{S}_{K}(\CC) $ and  $  K (  \mathrm{diag}_{\QQ}(1, p) ) K =   K g K $,    
the relation (\ref{duality})  implies  that     
$$  T_{p} = \ch( K g K)^{*}  =    \ch(K g K )_{*},    $$      
and $ T_{p} $ is often referred to as \emph{self-dual} for this reason. In particular, there is little possibility of confusion when working with Hecke operators away from primes dividing $ N $ in the case of $ \Gamma_{0}(N) $ level structures, and one can define this  operator  entirely  locally  at $ p $ using, e.g.,  $ \mathrm{diag}(p, 1) \in \Gb(\QQ_{p}) $.   %   
We note that   the  degree of  the  operator   $ T_{p} $ is $ p + 1  $.        
\end{example} 

\begin{example}   \label{HeckeGamma1} Suppose $ K = \widehat{\Gamma}_{1}(N) $ as in Example \ref{Gamma1(N)}. As observed in Remark \ref{Gammaquot}, this group is normal in $ \widehat{\Gamma}_{0}(N) $ with quotient  isomorphic to $  (\ZZ /N \ZZ)^{\times}  $. For any integer $ d $ satisfying $ (d, N) = 1 $, let $  \gamma  =  \gamma_{d, N} \in  \widehat{\Gamma}_{0}(N)  $    be any matrix whose top left entry reduces to  $ d $ modulo $ N $.  Then the correspondence~(\ref{Heckecorrdiagram}) for $g = \gamma^{-1}  $ is just the twisting isomorphism
\[
\langle d \rangle : \mathcal{S}_{K} \to \mathcal{S}_{K}, \qquad [x, g_{1}]_{K} \longmapsto [x, g_{1}\gamma]_{K}.
\]
Let us identify $ \mathcal{S}_{K}$ with $  Y_{1}(N)  $ using  (\ref{stdidenGamma1}).  From the action noted in Remark \ref{orbitremark}  and   the  discussion   in  Example  \ref{Gamma1(N)},  it is clear that the right action of $ \gamma $ on the moduli space for $ \mathcal{S}_{K}(\CC) $ sends the class of  pair $ (A,  e_{1})$ to that of  $ (A, d e_{1} ) $.  Thus     $ \langle   d   \rangle $ is  exactly the map  defined in \cite[p.~175, (5.9)]{Diamondmodular}.    %and \cite[\S 5.2]{RibetSerre}.  
The operator $$ \ch(K \gamma  ^{-1}     K)^{*}  =  [\gamma ]_ { K ,   *}   =    \langle d \rangle _{*}    $$ 
is referred to as the \emph{diamond bracket operator} and depends only on the class of $ d \pmod{ N} $.  
An explicit choice of $ \gamma  =   \gamma_{d, N}    \in \widehat{\Gamma}_{0}(N)    $ is one where  the    component at a  prime   $ \ell $  is    
\begin{equation}   \label{explicitchoice}   (\gamma )_{\ell} = \begin{cases}  \mathrm{diag}(d , 1) & \text{ if } \ell  \mid N \\
\quad \quad 1  & \text{otherwise}.      
\end{cases}   
\end{equation}    
 Now let $ p $ be  a prime such that $  p  \nmid  N  $ 
and set   $$ g  =   \mathrm{diag} _{\QQ} (1,p) .   $$   
Then $ \SL_{2}(\QQ) \cap L = \SL_{2}(\QQ) \cap  g^{-1} K g \cap K $ is the subgroup  $  \Gamma_{1}^{0}(N, p)  :=  \Gamma_{1}(N) \cap   {}^{t}\Gamma_{0}(N)  $
where $ {}^{t}\Gamma_{0}(N)$ denotes the transpose of $ \Gamma_{0}(N)$.    
Therefore, Lemma    \ref{translation} applied with $ q = g  $ implies that under the standard  identification   (\ref{stdidenGamma1}),  diagram   (\ref{Heckecorrdiagram})  corresponds to
\begin{center}   
\begin{tikzcd}    &   Y_{1}^{0}(N, p)  \arrow[dl, "\alpha"']    \arrow[dr, "\beta"]   \\
Y_{1}(N) &   &   Y_{1}(N) 
\end{tikzcd}   
\end{center}
where $ Y_{1}^{0}(N, p)(\CC) =  \Gamma_{1}^{0}(N, p)   \backslash \mathcal{H}^{+} $ 
and $ \alpha $, $ \beta $ are respectively induced by the maps $ z \mapsto z $, $ z \mapsto p^{-1} z $ on $ \mathcal{H}^{+} $. The operator $  \ch(Kg K)^ { *   }  =  \beta_{*} \circ \alpha^{*} $  is then the  operator ``$ T_{p} $" defined in \cite[\S 5.2]{Diamondmodular}.\footnote{See  exercises 1.5.6(c)  and 5.2.10 in \cite{Diamondmodular}.} Following the comment on p. 397 of \emph{loc.~cit.}, we denote this operator  by  $ T_{p, *} $. If $$ \sigma_{p} :  = \mathrm{diag}(p, 1) \in \Gb(\QQ_{p})  \hookrightarrow  \Gb(\Ab_{f}) , $$  then   $ KgK = K\sigma_{p}K $  clearly and  so,    
\begin{equation}   \label{Tp_*}      T_{p, *}    =   \ch (  K \sigma_{p}  K  ) ^ {* }  =   \ch( K   \sigma_{p}^{-1}   K )_{*}  .  
\end{equation} 
Let us  denote $  \ch(K g K) _{ * }   =  \alpha_{*}  \circ  \beta^{*} $  by $ T_{p} ^ { * }   $. It is easy to see   $ K g^{-1} K =  K  c^{-1} \sigma_{p}  \gamma K  $     
where $ c $ denotes $ \mathrm{diag}_{\QQ}(p, p) $    and  $ \gamma = \gamma_{p, N} $ is as   in   (\ref{explicitchoice}).  Therefore, 
\begin{equation}   \label{Tprelation}    T_{p}^{*} =  \ch( K g^{-1} K)^{*}   =  \ch(K  \sigma_{p}   K) ^{*}  \circ \ch ( K \gamma  K )^{*}  =  T_{p, *} \circ  \langle p \rangle^{*}    
\end{equation}
which is  consistent with the notation of \cite[Theorem 5.5.3]{Diamondmodular} and agrees with    the relation mentioned in  \cite[\S 2.3.1.1]{RibetSerre}.     Finally, if set $$ \tau_{p} :=   \mathrm{diag}(p, p) \in \Gb(\QQ_{p})  \hookrightarrow  \Gb(\Ab_{f}) , $$ 
then since $ c K =  \tau_{p}  \gamma K $, we can write 
\begin{equation}  \label{diamond_*}   \langle p \rangle_{*}   =   [\gamma]_{K , * }  =     [\tau_{p}^{-1}]_{*},  
\end{equation}   
\end{example}    
\begin{remark}While this is not stated explicitly, the map denoted $\pi_{2}^{(p)}$ in \cite[\S 5.2]{RibetSerre} (in the case $p\nmid N$) appears to be induced by the map $\tau \mapsto \gamma \cdot p\tau$ on the upper half-plane, where $\gamma \in \SL_{2}(\ZZ)$ represents $\langle p\rangle$. The operator ``$T_{p,*}$’’ in \emph{loc.~cit.} thus  coincides with the ``$T_{p}$’’ of \cite{Diamondmodular} by (\ref{Tprelation}).   See    \cite[Exercise 7.9.3(a)]{Diamondmodular}.
\end{remark} 

\subsection{The Eichler–Shimura relation}      \label{EichlerShimurasec}  
Recall that each $ \mathcal{S}_{K} $ is a smooth  integral     $\QQ$-scheme   of dimension one.    By  \cite[\href{https://stacks.math.columbia.edu/tag/0BY1}{Tag 0BY1}]{stacks-project} or  \cite[Theorem 16.3.3]{Vakil},     $   \mathcal{S}_{K} $ is an open subscheme of a uniquely determined integral  projective $ \QQ $-scheme $  \overline{\mathcal{S}}_{K} $ that we  refer to as its   \emph{smooth  compactification}.   The   same  result  also  implies  that  the    degeneracy maps $ \pr_{L, K} $ (\ref{degeneracy}) and the twisting isomorphisms  $ [g]_{K} $   (\ref{twisting})     admit unique extensions to the smooth compactifications of their underlying   schemes.   Let $$   \mathrm{J}_{K}  =  \mathrm{Pic}^{0}(\overline{\mathcal{S}}_{K})_{/\QQ} $$    denote the Jacobian variety of $ \overline{\mathcal{S}}_{K} $  \cite{MilneJacobian}.    This is an abelian variety over $ \QQ $ of dimension twice the genus of $  \overline{\mathcal{S}}_{K} $.  One can define  a  right action of the Hecke algebra $\mathcal{H}_{\ZZ}(K \backslash \Gb(\Ab_{f}) / K)$ on $  \mathrm{J}_{K} $   using   covariant   Hecke correspondences    in  a   manner similar to  divisors.   More precisely, we can define the pullback and pushforward needed in the definition of Hecke actions via the Picard and Albanese functoriality of Jacobians, respectively \cite[p.~443]{Ribet}.    %An   important   consequence of the Eichler-Shimura congruence relation  is that for all primes $ \ell \nmid N p $,    
This action can also be defined on the $p$-adic Tate module $$   \mathrm{T}_{p,  K }     =     \varprojlim   \nolimits    _{n }   \mathrm{J}_{K}[p^{n}](\overline{\QQ})$$ 
for any prime $ p $.  This is a free $ \ZZ_{p}$-module of rank  twice the genus of $   \overline{\mathcal{S}}_{K}  $, and has a natural left action of $ \Gal(\overline{\QQ}/\QQ) $ which commutes with the  aforementioned  Hecke actions.

Suppose now that $ K = \widehat{\Gamma}_{i}(N) $ for $ i = 0 , 1 $ as in the Examples of \S \ref{classicalmodular}.      Then the   standard   identification  $ Y_{i}(N) \simeq \mathcal{S}_{K} $   extends   uniquely  to  an identification $ X_{i}(N)  \simeq     \overline{\mathcal{S}}_{K} $.   An   important   consequence of the Eichler-Shimura congruence relation   is that for all primes $ \ell \nmid N p $,        
%More precisely, we can define  the pullback and pushforward  needed in the definition of $ \ch(K g K)_{*} $  respectively via  Picard and Albanese functoriality of Jacobians \cite[p.~443]{Ribet}.  An   important   consequence of the Eichler-Shimura congruence relation  is that for all primes $ \ell \nmid N p $, 
\begin{equation}   \label{Eichlerclassical} \mathrm{Frob}_{\ell}^{2} -  T_{\ell, *}  \,     \mathrm{Frob}_{\ell}  +  \ell \, \langle \ell \rangle _{*}   = 0   
\end{equation}    
as an endomorphism of $ \mathrm{T}_{p, K}   $. See \cite[Theorem 2]{Rohrlich}, \cite[\S 5]{RibetSerre} or \cite[Theorem 9.5.1]{Diamondmodular}.   As noted in Examples \ref{Gamma0(N)} and \ref{Gamma1(N)},  we can  write  $$ T_{\ell, *} =  \ch(  K  \sigma_{\ell} ^ {-1} K )_{*} ,   \quad   \quad   \langle \ell \rangle _{ * }    =    \ch(K \tau_{\ell}^{-1} K)_{*   }   $$    
where   
\begin{equation}   
\label{sigmatau}\sigma_{\ell} : =  \left (  \begin{smallmatrix}  \ell \\  & 1   \end{smallmatrix}   \right  )   , \quad \quad   \tau_{\ell}  : =   \left (  \begin{smallmatrix}  \ell   \\   &   \ell   \end{smallmatrix}   \right )   
\end{equation}    
are as in  Example  \ref{HeckeGamma1}.   
%Recall that $ \langle \ell \rangle_{*} $.     % So  we  can  rewrite  (\ref{Eichlerclassical})  as       
This motivates  the  following general definition.       
\begin{definition}     \label{Heckepolydef}     %\label{Heckepolydef}     
Let $ K $ be any level and $ \ell $ be any prime such  that    $ K $ is unramified at $ \ell $.  The \emph{Eichler-Shimura Hecke polynomial} at the prime $ \ell $ is  defined  to   be   
\begin{equation}       \label{Heckepolyexp}           \mathfrak{H}_{\mathrm{ES}, \ell}(X) = \ch(K)      X^{2}     - \ch(K \sigma_{\ell}^{-1} K)   \,    X + \ell   \,     \ch( K \tau_{\ell}^{-1} K)  .   
\end{equation}    
%The \emph{geometric Hecke polynomial} is defined to be $ \mathfrak{H}^{-}_{ \ell} (X)  =   X^{2}  \cdot     \mathfrak{H}_{\ell}(1/X) $
%These are 
considered  as  an   element   of $ \mathcal{H}_{\ZZ}(K \backslash \Gb(\Ab_{f})/ K)[X] $.   
\end{definition}  
In this notation,   relation (\ref{Eichlerclassical}) can be restated as follows.     
\begin{theorem}[Eichler–Shimura]   \label{EichShimteo}    For  every positive integer  $   N   $ and $ \ell $ a prime such that $ \ell \nmid N   p     $,  the Hecke-Frobenius endomorphism $ \mathfrak{H}_{\mathrm{ES},  \ell, *}  (\mathrm{Frob}_{\ell}) $  on   $ \mathrm{T}_{p, K  } $  vanishes    for      $  K =    \widehat{\Gamma}_{0}(N) $, $ \widehat{\Gamma}_{1}(N)  $.   
\end{theorem} 

We can  reformulate this relation   in terms of   $p$-adic     \'{e}tale cohomology. % Let $ X $ be a smooth projective scheme over $ \QQ $. 
By   \cite[\href{https://stacks.math.columbia.edu/tag/03RQ}{Tag 03RQ}]{stacks-project} and Poincar\'{e} duality   for  smooth   projective     curves over $ \overline{\QQ} $,    we   have  a  canonical isomorphism  $$  \mathrm{T}_{p, K}^{\vee}     \xrightarrow{\sim}   % \mathrm{H}^{1}_{\et}(J_{K, \overline{\QQ}},  \ZZ_{p}) \xleftarrow{\sim}    
  \varprojlim \nolimits _{n}  \mathrm{H}^{1}_{\et} (\overline{\mathcal{S}}_{K, \overline{\QQ}}, \ZZ/ p^{n} \ZZ) =:\mathrm{H}^{1}_{\et}(\overline{\mathcal{S}}_{K,  \overline{\QQ}}, \ZZ_{p} ) $$
of     $ \Gal(\overline{\QQ}/\QQ) $-representations. If $ L $ is a compact open subgroup of $ K $, then  these isomorphisms commute with the \emph{dual} of the  maps  induced by  Albanese (resp., Picard) maps on the  dual   Tate modules and pullback (resp., pushforward) on \'{e}tale  cohomology. Similarly for twisting  isomorphisms.   So  these  isomorphisms are also  equivariant  with respect to   the natural   covariant and contravariant Hecke actions one can  define using said maps.      On  the  other hand,   the   natural  pairing  $$ \langle -, - \rangle :  \mathrm{T}_{p, K}^{\vee}   \times  \mathrm{T}_{p,  K }   \to   \ZZ_{p} $$
induces an \emph{adjoint} Hecke action on $ \mathrm{T}_{p, K}^{\vee} $  induced by the covariant action on $   \mathrm{T}_{p,  K}   $.    One easily checks that this adjoint action on $  \mathrm{T}_{p, K}^{\vee} $ matches the  contravariant action that we can define directly.   So we also have  the following.    
\begin{theorembis}{EichShimteo}   \label{EichShimbisteo}   For every positive integer $ N $ and $ \ell $ a prime such that $ \ell \nmid Np  $,  the Hecke-Frobenius endomorphism $ \mathfrak{H}_{\mathrm{ES}, \ell}^{*}  (\mathrm{Frob}_{\ell}^{-1}) $  of $ \mathrm{H}^{1}_{\et}(\overline{\mathcal{S}}_{K ,  \overline{\QQ} } ,  \ZZ_{p})     $ vanishes for $  K =    \widehat{\Gamma}_{1}(N) $, $ \widehat{\Gamma}_{0}(N)  $.        
\end{theorembis}   
See \cite[Theorem 4.9]{Delignemodular}, where this result is  proved  for   the         \emph{interior cohomology}\footnote{the image of compactly supported cohomology $\mathrm{H}^{1}_{\et   ,    c     }(\mathcal{S}_{K}, \ZZ_{p}) $ in $ \mathrm{H}^{1}_{\et}(\mathcal{S}_{K}, \ZZ_{p})$}  of $ \mathcal{S}_{K}$ for principal   congruence  level      $  K = \widehat{\Gamma}(N)$.  Note that $ \mathrm{H}^{1}_{\et, c}(\mathcal{S}_{K}, \ZZ_{p}) \to  \mathrm{H}^{1}_{\et}(\mathcal{S}_{K}, \ZZ_{p}) $ factors as $$     \mathrm{H}^{1}_{\et, c}(\mathcal{S}_{K},  \ZZ_{p}) \to    \mathrm{H}^{1}_{\et}(\overline{\mathcal{S}}_{K}, \ZZ_{p}) \to \mathrm{H}^{1}_{\et}(\mathcal{S}_{K},    \ZZ_{p}).  $$ 
Now the second map above is injective by \cite[Remark 5.4]{Milneetale} and the first map is surjective by Poincar\'{e} duality. This implies that the interior cohomology is (Hecke and Galois equivariantly) isomorphic to $ \mathrm{H}^{1}_{\et}(\overline{\mathcal{S}}_{K}, \ZZ_{p}) $.  Thus  the   cohomological  Eichler-Shimura  relation above also holds for $ K = \widehat{\Gamma}(N)$.   One can    then    use this to establish the Eichler-Shimura relation for \emph{any}   level $ K $ that is unramified at the prime $ \ell   \neq   p    $ as follows. Choose a principal congruence level $ L  = \widehat{\Gamma}(N) $ contained in $ K $.     Since $ K $ is unramified at $ \ell $, we can assume that $ \ell \nmid N $.  Consider the   Galois  equivariant    pullback $$  \pr_{L ,  K  }  ^ { * }  :    \mathrm{H}^{1}_{\et}(\overline{\mathcal{S}}_{K, \overline{\QQ}}, \ZZ_{p} )   \to   \mathrm{H}^{1}_{\et}(\overline{\mathcal{S}}_{L, \overline{\QQ}},  \ZZ_{p}) .  $$ This is injective,   since cohomology is torsion free, and the post  composition  with $  \pr_{L, K ,* }$  induces multiplication by $ [K : L]$.   % Since $ K $ is unramified at $ \ell $, we can choose $ L $ and therefore $ \ell \nmid N $. 
Now one can  easily  verify that $$ \pr_{L, K}^{*} \circ  \ch(KgK)^{*}  = [K: L] \cdot \ch(LgL)^{*}  $$ 
for any $ g \in \Gb(\QQ_{\ell}) \hookrightarrow \Gb(\Ab_{f}) $ \cite[Corollary 2.4.3]{CZE}.  The vanishing of Hecke-Frobenius endomorphism for level $ K $ therefore follows from the  the corresponding  vanishing  for   level      $ L $.

\begin{remark} Since $ \mathrm{T}_{p, K}  \simeq \mathrm{H}^{1}_{\et}(\overline{\mathcal{S}}_{K, \overline{\QQ}   }    ,  \ZZ_{p}(1)) $, we  see that  $ \mathfrak{H}_{   \mathrm{ES},   \ell, *}(\ell \cdot \mathrm{Frob}_{\ell}) $  also vanishes on $ \mathrm{H}^{1}_{\et}(\overline{\mathcal{S}}_{K,  \overline{\QQ}} , \ZZ_{p})    $. This may also be deduced by noting that the constant term operator $$ c_{0} := \ell \cdot  \ch(K \tau_{\ell}^{-1} K ) $$  of $ \mathfrak{H}_{\mathrm{ES}, \ell}(X) $ is invertible in $ \mathcal{H}_{\ZZ[1/\ell]}(K \backslash \Gb(\Ab_{f}) / K ) $  with respect to the convolution operation and that    
\begin{equation}   \label{dualityHecke}       \mathfrak{H}_{\mathrm{ES}, \ell}(X)  =  (c_{0}^{-1})^{t} \cdot   X^{2}  \cdot   \mathfrak{H}_{\mathrm{ES}, \ell}^{t}(\ell/X)   ,   
\end{equation}
where $ \mathfrak{H}_{\mathrm{ES}, \ell}^{t}(Y) $ denote the polynomial in $ Y $ whose coefficients are transposes of the coefficients of $ \mathfrak{H}_{\mathrm{ES} , \ell }(Y) $.            
\end{remark}   
\begin{remark} The Eichler-Shimura congruence  relation is  established    more generally in   \cite[\S 10]{Carayol}  for Shimura curves arising from quaternion  algebras over totally real fields. Note  however    that the Shimura data used in \emph{loc.~cit.} coincides  with    (\ref{weirdShimura}) in the case of modular curves. One can use  the isomorphism (\ref{weirdshimuraiso}) to translate between the two conventions, as noted in Remark   \ref{weirdshimuravarcompar}.  In particular, if $ K $ equals its own transpose (e.g., $ K = \widehat{\Gamma}   (N)$),  then $ \ch(K gK ) $ in our convention corresponds to $ \ch(K ({}^{t}   \negsmall    {g}^{-1})K) $ in Carayol's convention. We also observe that  Carayol's  reciprocity law in \cite[\S 1.2]{Carayol} for geometrically connected components is the \emph{inverse}  of the one described in \S \ref{compsec}, which is consistent with  what we observed   in   Remark   \ref{weirdcompactremark}.\footnote{In particular,  the erroneous sign convention  noted in Remark \ref{errorremark} seems to not have affected Carayol's work.}       See  also    Remark  \ref{fuckthesetorsors}.  
\end{remark}   
\begin{remark} The vanishing  discussed above actually  extends to all degrees of \'{e}tale cohomology, i.e.,   $ \mathfrak{H}_{   \mathrm{ES},    \ell}^{*}   (   \mathrm{Frob}_{\ell}^{-1})     $ vanishes    on both $ \mathrm{H}^{0}_{\et} $   and    $ \mathrm{H}^{2}_{\et}$.  See the next subsection for a proof.     
This vanishing phenomenon is part of a far reaching  generalization proposed by Langlands for arbitrary Shimura varieties, who was motivated by the problem of computing the Hasse-Weil   zeta functions  of  these   varieties.      See \cite{BlasiusRogawski} for a   discussion.   
\end{remark}     

\subsection{A sanity check}   \label{sanitycheck}    As  is     evident from the discussion  so  far,  one has to reckon with a multitude of $(\ZZ/2\ZZ)$-torsors of conventions\footnote{This terminology is due to Christophe Cornut.} when working with adelic modular curves and, more generally, Shimura varieties. For instance, one must choose whether to work with arithmetic or geometric Frobenii (in addition to fixing the normalization of the Artin map used in the reciprocity laws), whether the Hecke action is taken to be covariant or contravariant, and  whether to use left or right action on level  structures.      Fortunately, most recent literature has largely converged on a common set of conventions, and these are the ones adopted in the present article.   

However, the use of alternative conventions in earlier works (both classical and adelic) introduces considerable potential for confusion, and the most relevant  in the context of Euler systems concerns the definition of the Hecke polynomial for a Shimura datum. In the appendix to \cite{Nekovar}, Jan Nekov\'{a}\v{r} suggested that with the standard choices,\footnote{i.e., Frobenii are geometric, the Artin map is normalized in Deligne’s convention, Hecke actions are contravariant, the level structures are acted on from the right, etc.} it is the Hecke polynomial associated with the \emph{inverse} of the  Hodge  cocharacter $\mu_{\mathcal{X}}$ for a Shimura datum $(\Gb',\mathcal{X})$ that should appear in the conjectural generalization of the Eichler–Shimura  relations  on the \'etale cohomology of the Shimura varieties attached to $(\Gb',\mathcal{X})$. While we have not explained how one attaches Hecke polynomials to cocharacters, the reader can accept our claim that  this  polynomial is $ \mathfrak{H}_{\mathrm{ES},   \ell}(X) $ for  the  datum     $ (\Gb, \mathcal{X}_{\mathrm{std}})$   by comparing our expression with  \cite[(A1.6.1)]{Nekovar}. This stands in contrast with \cite[\S 6]{BlasiusRogawski}, whose conventions appear to align with the standard ones, but where the conjectural congruence relation is stated using the Hecke polynomial for  $\mu_{\mathcal{X}}$.     For  $ ( \Gb , \mathcal{X}_{\mathrm{std}}) $,  this  polynomial is   
\begin{equation}   \label{wrongHeckepoly}      \mathfrak{H}_{\mathrm{BR}, \ell}(X)   =  \ch(K)  X^{2} - \ch(K \sigma_{\ell} K ) \, X  + \ell \,  \ch( K \tau_{\ell} K )   ,    
\end{equation}    whose coefficients are \emph{transposes} of the coefficients of (\ref{Heckepolyexp}). See the reverse characteristic polynomial denoted ``$P_{r}(X)$" on  \cite[p.~536]{BlasiusRogawski}, which satisfies $ \mathfrak{H}_{\mathrm{BR},\ell}(X) = X^{2} \cdot  P_{r}(\ell^{\frac{1}{2}} \cdot 1/X)$ when the measure of $ K $   equals    one.        

\begin{remark}
Although this is not explicitly stated in \cite[p.~527]{BlasiusRogawski}, the Frobenius ``$\Phi_{v}$'' used  throughout    is geometric. This follows from their proof of Proposition~6.1, which invokes Deligne’s theorem on the absolute values of the eigenvalues of geometric Frobenii. See also the introductions of \cite{Bultel} and \cite{Wedhorn}.
\end{remark}

The purpose of this subsection is to provide directly verifiable evidence supporting Nekov\'{a}\v{r}'s claim by determining which of the two Hecke polynomials (evaluated at \emph{geometric} Frobenii) vanishes on the zeroth \'etale cohomology of modular curves.   We show that the endomorphism induced by $ \mathfrak{H}_{\mathrm{ES}, \ell}(X)$, formulated using the standard conventions, always vanishes, whereas the  endomorphism  induced by $ \mathfrak{H}_{\mathrm{BR},\ell}(X) $    does not. To make this subsection as self-contained as possible for readers who simply wish to check this computation themselves, we recall below the relevant notation and conventions used in  our  computation.

Let $(\Gb, \mathcal{X}_{\mathrm{std}})$ be the standard Shimura datum (\ref{SDdatum}). 
For each compact open subgroup $K \subset \Gb(\Ab_{f})$, let $\mathcal{S}_{K}$ denote the corresponding canonical model, whose $\CC$-points are given in (\ref{Shimuravar}). 
The  modular curve  $\mathcal{S}_{K}$ is a smooth   integral    affine $\QQ$-scheme and admits a unique smooth compactification over $\QQ$, which we denote by $\overline{\mathcal{S}}_{K}$. As noted in \S  \ref{compsec},  the geometrically connected components of $\mathcal{S}_{K}$ are parametrized by the double quotients
\begin{equation} \label{doublequotforcomp}
\Gb(\QQ)^{+} \backslash \Gb(\Ab_{f}) / K \;\xrightarrow{\sim}\; 
\QQ^{\times}_{\ge 0} \backslash \Ab_{f}^{\times} / \det(K),
\end{equation}
where the isomorphism between the two sides is induced by the determinant map $\det : \Gb \to \GG_{m}$.
For each $h \in \Gb(\Ab_{f})$, let $z_{K}(h)$ denote the geometrically connected component of $\mathcal{S}_{K}$ indexed by $h$, which is a quotient of the upper half-plane by a congruence subgroup of $\SL_{2}(\QQ)$. 
We regard $z_{K}(h)$ as a $\overline{\QQ}$-scheme. 
Clearly,
\[
z_{K}(h) = z_{K}(q h k) 
\qquad\text{and}\qquad
z_{K}(h h') = z_{K}(h' h)
\]
for all $q \in \Gb(\QQ)^{+}$, $k \in K$, and $h, h' \in \Gb(\Ab_{f})$.  The quotients (\ref{doublequotforcomp}) also describe   the      geometrically connected components of $\overline{\mathcal{S}}_{K}$: the component indexed by $h$ is simply the smooth compactification $\overline{z}_{K}(h)$ of $z_{K}(h)$, which we also view as a scheme over $ \overline{\QQ}$.  For a  scheme  $X$ over $ \QQ $ and a prime $p$, we  denote the $i$-th  $p$-adic \'etale cohomology of the base change of 
$X$ to  $ \overline{\QQ}$   by     
$$  \mathrm{H}^{i}_{\et}(X_{\overline{\QQ}}, \ZZ_{p}    ) ,   $$    
which is endowed with a    left $ \Gal(\overline{\QQ}/\QQ)  $-action in the usual way. For a set $ Y $, we let $ \mathcal{C}_{\ZZ_{p}}(Y) $   denote  $ \ZZ_{p}$-module of all $  \ZZ_{p}$-valued  functions on $ Y $ that have finite support.  Then we   have canonical isomorphisms
\begin{equation}    \label{isos} 
\begin{array}{ccccl}
\mathcal{C}_{\ZZ_{p}}(\Gb(\QQ)^{+} \backslash \Gb(\Ab_{f}) / K  )     
& \xrightarrow{\:\sim\:} & 
\mathrm{H}^{0}_{\et}(\mathcal{S}_{K,\overline{\QQ}}, \ZZ_{p}) 
& \xrightarrow{\:\sim\:} & 
\mathrm{H}^{0}_{\et}(\overline{\mathcal{S}}_{K,\overline{\QQ}}, \ZZ_{p}) 
\\[6pt]
\ch(\Gb(\QQ)^{+} h K) 
& \longmapsto & 
z_{K}(h) 
& \longmapsto & 
\bar{z}_{K}(h)
\end{array}
\end{equation}
of $ \ZZ_{p}$-modules.   We can endow the leftmost module  in   (\ref{isos}) with a left $ \Gal(\overline{\QQ}/\QQ)  $-action  that   factors  through $ \Gal(\QQ^{\mathrm{ab}}/\QQ)$ using (\ref{doublequotforcomp}) and (\ref{CFTforQ}). This is normalized so that the geometric Frobenius $ \mathrm{Frob}_{\ell}^{-1} $  at a  prime $ \ell  $    acts via $$   \ch(\Gb(\QQ)^{+}hK)  \mapsto  \ch(\Gb(\QQ)^{+}ahK)   $$    
for any element $ a \in \Gb(\Ab_{f}) $ that  has determinant $ \ell \in \QQ_{\ell}^{\times} \hookrightarrow \Ab_{f}^{\times} $.  Then the Deligne-Shimura reciprocity law described in \S \ref{compsec} (and functoriality of \'{e}tale  cohomology)         implies  that  the isomorphisms in (\ref{isos}) are equivariant with respect to Galois actions.  If $ L $ is a compact open subgroup of $ K $, there are natural pullback and pushforward   morphisms    on all of these modules  induced  by   the    finite flat degeneracy map $ \mathrm{pr}_{L, K}$    (\ref{degeneracy}). Similarly for the twisting isomorphisms (\ref{twisting}). It is straightforward to verify that  the  isomorphisms  (\ref{isos}) are also compatible with  respect   to   these     induced    maps.  So  one can verify Nekov\'{a}\v{r}'s claim on any of  these  modules.    

For each $g \in \Gb(\Ab_{f})$, we have a Hecke correspondence diagram 
(\ref{Heckecorrdiagram}).  
Using the functorial pullbacks and pushforwards of \'etale cohomology 
induced by  the  finite   flat  degeneracy maps and  twisting isomorphisms  on modular curves, we can define the contravariant Hecke action
\[
\ch(K g K)^{*} : 
\mathrm{H}^{0}_{\et}(\mathcal{S}_{K,\overline{\QQ}},  \ZZ_{p} )
\longrightarrow 
\mathrm{H}^{0}_{\et}(\mathcal{S}_{K,\overline{\QQ}},  \ZZ_{p}    )
\]
as the map 
$   \big   (    [g^{-1}]_{K'}  \circ \pr_{L, K'   }     \big      )   _{*}       \circ \pr_{L,K}^{*}$, 
where $ K ' = g^{-1} K g $ and $ L  = K \cap K' $. 
\begin{lemma}
$\;\ch(K g K)^{*} \cdot z_{K}(h)
= \lvert K g K / K \rvert \cdot z_{K}(hg^{-1})$.
\end{lemma}

\begin{proof}
This is \cite[Example 4.2]{explicitdescent}.  
The neatness assumption on levels $K$ used in \emph{loc.~cit.} can be removed in light of 
the results of \S\ref{pullsec}.  
One can also verify the statement directly by comparing the degrees of the 
components of $\mathcal{S}_{K \cap g^{-1} K g}(\CC)$ over  the   components   of    
$\mathcal{S}_{K}(\CC)$ as in Lemma~4.8 of \emph{loc.~cit.}
\end{proof}

\begin{remark} A quick check on our result is that the pullback action of $ g $ on the function  $ \ch(\Gb(\QQ)^{+}   h    K) $ is via right translation on domain,   which gives  $  \ch( \Gb(\QQ)^{+}   h    K g^{-1}   ) $. If $ g $ normalizes $ K $,   this  is obviously  equal to   $ \ch(\Gb(\QQ)^{+}  h    g^{-1} K) $.   
\end{remark}

We can now carry out our  verification.  Let  $ \ell   \neq   p    $   be    a prime where $ K $ is unramified and let  $ \mathfrak{H}_{   \mathrm{ES},   \ell}(X) $ be as in  Definition  \ref{Heckepolydef}. Let us take the local element $ \sigma_{\ell} = \mathrm{diag}(\ell , 1) $ as in (\ref{sigmatau}) to represent $ \mathrm{Frob}_{\ell}^{-1} $.    Then  for any $   h    \in \Gb(\Ab_{f} )$, we have 
\begin{align*} \mathfrak{H}_{\mathrm{ES}, \ell}^{*}(\mathrm{Frob}_{\ell}^{-1})  \cdot  z_{K}(h)   &  =    z_{K}(h \sigma_{\ell}^{2})   - (\ell +1 )  z_{K}(h \sigma_{\ell}^{2})  +  \ell   z_{K}(h \tau_{\ell}) \\
& =  z_{K}(h   \sigma_{\ell}^{2}) - (\ell + 1) z_{K}(h \sigma_{\ell}^{2})  +  \ell   z_{K}(h    \sigma_{\ell}^{2})  =  0   .        
\end{align*} 
To handle cohomology in degree $ 2 $, note that the  endomorphism   $$ \mathfrak{H}_{\mathrm{ES}, \ell,   *}(\ell \cdot \mathrm{Frob}_{\ell})   =     \ell^{2}    \, \ch(K)_{*}   \,     \mathrm{Frob}_{\ell}^{2}     - \ell \,  \ch(K \sigma_{\ell}^{-1} K)_{*}   \,   \mathrm{Frob}_{\ell}    + \ell   \,     \ch( K \tau_{\ell}^{-1} K)_{*}       $$    
also     vanishes on $   \mathrm{H}^{0}_{\et}(\overline{\mathcal{S}}_{K,   \overline{\QQ}} ,   \ZZ_{p}) $ by   (\ref{dualityHecke}).     Therefore,     $  \mathfrak{H}_{\mathrm{ES}, \ell, *}( \mathrm{Frob}_{\ell}   )    $    vanishes on $      \mathrm{H}^{0}_{\et}(\overline{\mathcal{S}}_{K,   \overline{\QQ}} ,   \ZZ_{p}(1)) $. Since $$  \mathrm{H}^{0}_{\et}(\overline{\mathcal{S}}_{K,   \overline{\QQ}} ,   \ZZ_{p}(1))     \simeq  \mathrm{H}^{2}_{\et}(   \overline{ \mathcal{S}}_{K,  \overline{\QQ}} ,   \ZZ_{p}  )^{\vee}    $$
by Poincar\'{e} duality, we obtain the   vanishing  of  $ \mathfrak{H}_{\mathrm{ES},  \ell}^{*}(\mathrm{Frob}_{\ell}^{-1}) $ on  $     \mathrm{H}^{2}_{\et}(\overline{\mathcal{S}}_{K,  \overline{\QQ}},  \ZZ_{p})  $ by dualizing.

On the other hand,   
\begin{align*}     \mathfrak{H}_{\mathrm{BR}, \ell}^{*}   (  \mathrm{Frob}_{\ell}^{-1})  \cdot  z_{K}(  h   )    &    =     \mathfrak{H}_{\mathrm{ES},\ell, *}( \mathrm{Frob}_{\ell}^{-1}) \cdot  z_{K}(h)   \\
& =   z_{K}( h \sigma_{\ell}^{2})   - (\ell +1 )  z_{K}(h )   +  \ell   z_{K}(h \tau_{\ell}^{-1}) \\
& =  z_{K}( h   \sigma_{\ell}^{2}) - (\ell + 1) z_{K}(h )  +  \ell   z_{K}(  h     \sigma_{\ell}^{-2})   ,        
\end{align*} 
which is clearly not zero if $ K^{\ell} = K /\GL_{2}(\ZZ_{\ell})  $ is chosen appropriately.\footnote{It is of course zero if $ \mathcal{S}_{K} $ is geometrically   connected.}  For instance, we  can    take $ h = 1 $, $ K = \widehat{\Gamma}(N) $ for any  $ N \geq   3    $ and $ \ell $    any    prime such that $ (\ell , N ) =   1      $   and    $ \ell^{2} \not \equiv   \pm   1    \pmod{N} $.      Note however that the endomorphism $$ \mathfrak{H}_{\mathrm{BR}, \ell}^{*}( \mathrm{Frob}_{\ell})   =    \mathfrak{H}_{\mathrm{ES}, \ell, *}( \mathrm{Frob}_{\ell})   $$     does vanish on the zeroth \'{e}tale  cohomology.  
\begin{remark}   \label{fuckthesetorsors}  
One can  similarly check that for the alternative    Shimura  data (\ref{weirdShimura}), it is  $ \mathfrak{H}_{\mathrm{BR}, \ell}^{*}(\mathrm{Frob}_{\ell}^{-1})$ that vanishes on the zeroth \'{e}tale  cohomology. This   is    consistent with the fact that the Hodge cocharacter for the data (\ref{weirdShimura}) is the inverse  of the Hodge cocharacter for (\ref{SDdatum}),  as  noted  in  Remark  \ref{weirdhodgecocharacter}.      
\end{remark}    
\begin{remark}    
The choice of the inverse Hodge cocharacter for Eichler–Shimura relations  is   noted in \cite[Remark 2.1.3]{SiYing}.  See also \cite[Remark 4.1.3]{Morel}, \cite[Corollary 9.2]{Scholze-Shin} and \cite[\S 2.2]{CaraianiShin}, where these inverse cocharacters make an appearance.
\end{remark}

\begin{remark}      \label{errorremark}   
As Christophe Cornut has explained to the author, the discrepancy in \cite{BlasiusRogawski} may well have its origins in Deligne’s sign error in his    Corvallis article \cite{DeligneVar}. The mistake went unnoticed for more than a decade before being identified by Milne in 1990 and subsequently acknowledged by Deligne \cite{Milne1990b}. To clarify,  this    sign error appears in the extra inverse occurring in the reciprocity morphism in \cite[\S 2.2.3]{DeligneVar}.   The remaining conventions used by Deligne   \emph{must still be used}    after correcting this   error in order to  obtain a valid  theory of canonical models. 
\end{remark}

\section{The  horizontal Euler  system}    
\label{Eulersystemsec}      
We maintain the notations and conventions introduced in \S \ref{SDdatumsec}-\ref{canonicalmodsec} and \S \ref{CMpointsubsection}.   In particular, $K$ denotes the fixed compact open subgroup of $\Gb(\Ab_{f})$ from \S\ref{canonicalmodsec}.  
If $n$ is a square-free positive integer, we let $[n]$ denote the set of primes dividing $n$,  
$\Ab_{f,[n]} := \prod_{\ell \mid n} \QQ_{\ell}$,  
and $\Ab_{f}^{[n]} := \Ab_{f} / \Ab_{f,[n]}$ denote the ring of finite adeles away from the primes dividing $n$.      Let  $   R       $ be the set of all rational primes $ \ell $ such that the following conditions are satisfied.  
\begin{enumerate} 
\item[(C1)] $ \ell $ does not divide the  discriminant $ \mathrm{disc}(E) $.    
\item[(C2)] The   $ \ZZ_{\ell}  $-lattice generated by $  \omega_{1} \otimes 1 ,   \omega _{2}  \otimes 1 $ inside $ E \otimes _ { \QQ }  \QQ_{\ell } $ is $  \OO_{\ell}  = \mathcal{O}_{E, \ell}   :  =     \OO_{E} \otimes _ { \ZZ }  \ZZ_{\ell}   $.
\item[(C3)]  $   K  $  is unframified at $ \ell $,
\item[(C4)]   $ K ^ { \ell } = K/\GL_{2}(\ZZ_{\ell}) $ contains the element   $   \mathrm{diag}    ( \ell , \ell )  \in 
\Gb ( \QQ )  \hookrightarrow \Gb ( \Ab_{f}/\QQ_{\ell})  $  if  $  \ell $ is  inert.      
\end{enumerate}   
Condition (C1) implies that $ \ell $ is unramified in $ E $.   If $ \ell \in R $ is inert in $ E  $, we let $ \lambda $ denote the unique prime in $ E $ above $ \ell $. If   $   \ell \in R  $ is  split   in  $   E   $, we  let   $ \lambda $ be any  one   of the two  primes above $  \ell    $  in  which  case we denote the conjugate of $ \lambda $     by  $  \bar{\lambda}   $.      Let  $   \Lambda   $   be  the  set of all  primes  $ \lambda $ of  $ E  $ above $ R $ obtained by this procedure and set $  \mathcal{N}  $  to  be     the set of all square-free  products  of primes in $  R   $. We consider $ 1 \in \mathcal{N} $ as the empty product. For $ n \in \mathcal{N}$, we can write   $$  K =  K^{[n]   } K_{[n]}  $$ where   
$ K_{[n]}    :     = \prod_{\ell  \mid n } K_{\ell} $ and $ K^{[n]}   =  K  /  K_{[n]}     \subset \Gb ( \Ab_{f}^{[n]}) $.   The first condition  also  implies that $ \Hb $ admits a smooth model over $ \ZZ_{\ell} $, whose group of $ \ZZ_{\ell} $-points equals  the group of units  $ \mathcal{O}_{\ell}^{\times} $, which is  the   unique maximal compact open subgroup of $ \Hb(\QQ_{\ell})$. 
\begin{remark}   If $ (\omega_{1}, \omega_{2}) $ forms   a $ \ZZ$-basis for $   \mathcal{O}_{E}   $, condition (C2) is redundant.   Condition (C4)  is  imposed to reflect the behavior of the Frobenii above  inert primes in the  anticyclotomic extensions of $ \QQ $.  For applications to Euler systems, we would also like $ R $ to contain infinitely many primes.  This is clearly true if $K $ contains the diagonal group   $ \widehat{\ZZ}^{\times}    \hookrightarrow \GL_{2}(\widehat{\ZZ}) $ and in particular,  for $ K = \widehat{\Gamma}_{0}(N) $. %If $ \omega_{1}, \omega_{2} $ form a $ \ZZ$-basis for $ E $, condition (C2) is redundant. 
If $ K = \widehat{\Gamma}(N) $ or $ \widehat{\Gamma}_{1}(N) $, then $ R $ contains all but finitely many primes that are congruent to $ 1 $ modulo $ N  $ and   are    inert in $ E $, and contains all but finitely many primes that are split in $ E  $.                
\end{remark}   
\subsection{CM divisors}   
\label{CMdivisors} Recall that $ \mathcal{P}_{K} $ (\ref{pkc}) denotes the set of points $ \mathcal{S}_{K}(\CC) $ that have CM by $ E $.     Consider the free $ \ZZ $-module 
$$ \mathcal{Z} = \mathcal{Z}_{K} := \ZZ\langle  \mathcal{P}_{K}\rangle .  $$
It admits a $\ZZ$-linear left action of the Galois group $\Gal(\overline{\QQ}/E)$ as defined in \S \ref{CMpointsubsection}, which is equivalently described by the left action of $\Hb(\Ab_{f})$. Explicitly, elements of $\Hb(\Ab_{f})$ act by left multiplication on the second component of   the    points   in    $\mathcal{P}_{K}$, i.e.,
$$ 
h   \cdot     [h_{0}, g]_{K} = [h_{0}, hg]_{K} 
$$
for all $h \in \mathbf{H}(\Ab_{f})$ and $g \in \Gb(\Ab_{f})$. If $ V \subset \Hb(\Ab_{f}) $ is a compact open subgroup, we let   $$   \mathcal{Z}(V) :  = \mathcal{Z}^{V}   $$    denote the $ \ZZ $-submodule of all $ V $-invariant linear combinations. This is then precisely  the subgroup of CM divisors  that are defined over the field $ E_{V} $ associated to $ V $ via (\ref{CFTforE}).  
We say that a divisor $ \xi = \sum_{\gamma} a_{\gamma} [h_{0}, \gamma] \in \mathcal{Z} $ is \emph{unramified} at a prime $ \ell \in R  $ if its stabilizer in $ \Hb(\Ab_{f}) $ contains the subgroup $ \OO_{\ell}^{\times} $ of units of $ \OO_{\ell} $, where $ \OO_{\ell}^{\times} $ is viewed as a subgroup of $ \mathbf{H}(\Ab_{f}) $ via 
$$ 
\OO_{\ell}^{\times}  =  \Hb(\ZZ_{\ell})  \hookrightarrow  \mathbf{H}(\QQ_{\ell}) \hookrightarrow \mathbf{H}(\Ab_{f}). 
$$ 
We say that $ \xi \in \mathcal{Z} $ is \emph{unramified at} $ n \in \mathcal{N} $ if it is unramified at all $ \ell \mid n $.   We denote by $ \mathcal{Z}_{[n]}    \subset \mathcal{Z} $ the $ \ZZ $-submodule of all elements in $ \mathcal{Z} $ that are unramified at $ n $. 

As evident  from  the   expression (\ref{badHecke2}), the group of CM divisors also admits a \emph{right} Hecke action by covariant Hecke  operators.   
We collectively denote the Galois and Hecke actions by  \begin{equation}   \label{HeckeGaloisaction}       ( h  ,   \mathrm{ch} ( K g  K   )_{*}  )   \cdot   [  h_{0}  ,    g_{1}   ]   _ {   K   }             =   \sum _  {  \gamma  \in  Kg K  / K  }    [   h_{0}    ,    h  g_{1} \gamma  ] _ { K   }
\end{equation} 
where $ h  \in  \Hb  ( \Ab  _ {  f }    )  $ and $   g  ,   g_{1}    \in    \Gb    ( \Ab_{f} ) $.     Since the point $ h_{0} \in \mathcal{X}_{\mathrm{std}}    $ does not play any role in the definition of Galois and Hecke actions, we can describe these actions  in a  more representation theoretic way. For a topological space $ X $, let $ \mathcal{C}_{\ZZ}(X)$ denote   the  set   of   $ \ZZ$-valued function on $ X $ with finite support.  Define $$ \mathcal{F} : =   \mathcal{C}   _{\ZZ}( \Hb(\QQ) \backslash \Gb(\Ab_{f}) / K ) $$
where $ \Hb(\QQ) $ is viewed as   a subgroup of $ \Gb(\Ab_{f}) $ via $ \Hb(\QQ) \xrightarrow{ \iota} \Gb(\QQ) \hookrightarrow \Gb(\Ab_{f}) $. Then $ \mathcal{F} $ is identified with the $ \ZZ$-module of functions on $ \xi : \Gb(\Ab_{f} ) \to \ZZ $ that are compactly supported modulo $ \Hb(\QQ) $ and  invariant by $ K $ under the right translation action on the domain. The left action of $ h \in \Hb(\Ab_{f}) $ and the right action of $ \ch(KgK) \in \mathcal{H}_{\ZZ}( K \backslash \Gb(\Ab_{f}) / K )  $ on $ \xi \in \mathcal{F  }  $   are    defined via $$  \xi \mapsto  \sum_{\delta  \in K \backslash  K g K  } \xi ( h^{-1} ( - ) \delta^{-1}) .   $$ 
Note that even  though the individual summands are only right invariant under $ \delta^{-1}K\delta $, the whole sum  is right invariant under translation by $ K $, so the action is well-defined.   Now since the  stabilizer of $ h_{0}  \in \mathcal{X}_{\mathrm{std}}   $ in $ \Gb(\QQ) $ is $ \Hb(\QQ) $, there is a $ \ZZ $-linear bijection  
\begin{equation}   \label{psiparamodular}    
\begin{split} \psi \, : \,    \mathcal {  F  }     &   \to  \mathcal{Z}  \\     \mathrm{ch}(E^{\times} g_{1}  K   )    &   \mapsto [   h_{0}      ,   g   _{1}   ]  _     {  K   }   
\end{split}     
\end{equation} 
Clearly, $ \psi $ respects $ \Hb(\Ab_{f}) $-actions and one can verify that it also respects   the    Hecke  actions \cite[\S 1.1]{explicitdescent}.    For any $ \xi  \in   \mathcal{C}_{\ZZ}\big(  \Gb(\Ab_{f})/ K  \big )  $,  we  let $ [\xi] \in \mathcal{F}   $   
denote the image of $  \xi   $ under the map 
\begin{equation}   \pr :  \mathcal{C}_{\ZZ} \big ( \Gb (\Ab_{f}) / K  \big )  \to   \mathcal{F}   
\end{equation}    
induced by  the   projection $ \Gb(\Ab_{f})/K \to \Hb(\QQ)  \backslash  \Gb(\Ab_{f})/K   $.   
Explicitly, if $  \xi   = \ch(gK )$, then $ [\xi    ] = \ch(E^{\times} g K ) $. 
Then  $    \mathrm{pr} $ is also   equivariant with respect to the $ \Hb(\Ab_{f})$ and Hecke actions  defined   similarly.

\subsection{The Hecke  polynomial}    \label{HeckepolyGL2sec}  
Recall that for $ \ell $ a prime, we denote \begin{equation}    
\sigma _ { \ell }  : =   \left ( \begin{smallmatrix}   \ell & \\  &  1   \end{smallmatrix}   \right ) ,  \quad \quad   \tau _ { \ell } :  =  \left ( \begin{smallmatrix}  \ell   &   \\    &  \ell   \end{smallmatrix} \right ) \end{equation} which we view as elements of both $   \Gb    ( \QQ_{\ell} ) $  and  also   $   \Gb    ( \Ab_{f} ) $ via $  \Gb ( \QQ_{\ell}   )   \hookrightarrow  \Gb   ( \Ab_{f} )  $. 
\begin{definition}  The normalized \emph{reverse   geometric  Hecke  polynomial}  at a  prime $ \ell \in R $ is 
\begin{equation}  \mathfrak{H} _{\ell} (X)   :      =    \ell   \,   \mathrm{ch} (  K   ) -         \mathrm{ch} ( K   \sigma _ { \ell }  ^{-1}   K   )  X   +   \mathrm{ch} ( K  \tau    _  {  \ell  } ^{-1}        K)   X ^ { 2 }     
\end{equation} in  the   polynomial      ring    $  \mathcal{H}_{\ZZ}(K \backslash   \Gb    ( \Ab_{f} ) / K )   [  X ] $.  
\label{HeckepolyGL2}    
\end{definition}  

By our discussion in the previous subsection, the  expression   $ \mathfrak{H}  _ { \ell  ,    * }  (   \gamma   )   $ for $ \gamma \in \Gal(E^{\mathrm{ab}   }  /  E ) $  acts on the  module   $   \mathcal{Z} = \mathcal{Z}_{K}   $ via  the  commuting actions    of  covariant     Hecke operators and   the Galois   group.     If $  \gamma  =  \mathrm{Frob}_{\lambda}^{-1} \in   \Gal ( E ^ { \mathrm{ab} }   /   E )$ is a choice of (geometric) Frobenius element  at  a prime $  \lambda   \in   \Lambda     $, then for any  abelian extension $ F / E  $ in which  $  \lambda   $ is  unramified,  $  \mathrm{Frob}_{\lambda}^{-1} $  restricts to the  inverse Frobenius  substitution     $   \mathrm{Fr}_{\lambda}^{-1}    \in   \Gal ( F  /  E )   $.  The action of $ \mathrm{Frob}_{\lambda}^{-1}             $ on $ \mathcal{Z}_{[n]} $     for   $   \lambda  \nmid  n $ is  then independent of  this  choice.   
\begin{remark}   \label{normalizeremark}     Suppose $ K = \widehat{\Gamma}_{0}(N) $ for some $ N  \geq  1 $.     Let  $ A $ be an elliptic curve of conductor $ N $, $ \tilde{A} $ denote its reduction at a prime $ \ell \in R $ and $ a_{\ell} = \ell + 1 - \tilde{A}(\mathbb{F}_{\ell}) $ denote the quantity from introduction. The modularity theorem implies that $ A$  appears as a quotient of $ J_{K}   $ in such a way that under the induced map on Tate modules, the relation (\ref{Eichlerclassical})  specializes  to  
\begin{equation}   \label{Frobcong} \mathrm{Frob}_{\ell}^{2} - a_{\ell} \mathrm{Frob}_{\ell} + \ell = 0   \in  
 \mathrm{End}_{\ZZ_{p}}(\mathrm{T}_{p}(A)) . 
\end{equation} Therefore, the (not necessarily zero) endomorphism of $ \mathrm{T}_{p}(J_{K}   )$ that specializes to the reverse characteristic polynomial of $ \mathrm{Frob}_{\ell}^{-1} $ acting on $ \mathrm{T}_{p}(A)  \simeq \mathrm{T}_{p}(A)^{\vee}(1) $ under this quotient map is
\begin{equation}   \label{Heckepolyreal}     \ch(K)_{*} -  \ell^{-1}  \ch(K \sigma_{\ell}  ^{-1}     K)_{*}   \cdot   \mathrm{Frob}_{\ell}^{-1}   +  \ell^{-1} \ch(K \tau_{\ell}   ^ {  -  1   }     K )_{*}    \cdot   \mathrm{Frob}_{\ell}^{-2}.   
\end{equation} 
For aesthetic reasons,  we have scaled this expression by $ \ell $, so that its coefficients all lie in the Hecke algebra with coefficients in $ \ZZ $. This is harmless, since horizontal norm relations are useful only at primes $ \ell \neq p $, and we can always scale the classes back to match the Euler factor given by (\ref{Heckepolyreal}) after dividing by $ \ell \in \ZZ_{p}^{\times} $.   See  \S  \ref{projection}.     
\end{remark} 

\subsection{Frobenii matrices}   \label{Frobmatrixsec}     We would like to explicitly  describe    elements    in $ \Gb(\Ab_{f}) $ that correspond    via the embedding $ \iota $ to the Frobenii elements in $ \Hb(\Ab_{f}) $.  This is simple if $ \ell $ is  inert  since $ \lambda $ is the unique prime above $ \ell $ and multiplication by $ \ell $ on $ E $ corresponds to diagonal matrix in any basis.  For split  $  \ell  $,  note that    $ \Hb ( \QQ_{\ell } )   \cong  \QQ_{\ell} ^ {  \times }  \times \QQ_{\ell} ^ { \times } $, but the local embedding  $ \iota _ { \ell }   :     \Hb ( \QQ_{\ell} )  \hookrightarrow  \GL_ {  2  }     ( \QQ_{\ell} ) $ is not  diagonal.   To  remedy this, let        $$  \beta_{1}  ,  \beta _{2}    \in  \mathcal{O}_{E}  \otimes \ZZ_{\ell}    $$ be the  two    local idempotents, with $ \beta_{1}$ corresponding to our choice of $ \lambda $ above $ \ell $.   Recall  that  for  any    $   \omega   \in E_{\ell} $, $  \iota_{\ell}(\omega) $ is the matrix of multiplication by $  \omega  $ in the ordered   basis $ (\omega_{1}, \omega_{2}) $. Since $ (\omega_{1} , \omega_{2} )$ and $ (\beta_{1}, \beta_{2}) $ are both bases of $ \mathcal{O}_{\ell} $ by (C2), we can write  $ \beta_{1} = a  \omega_{1} + c  \omega _{2} $, $  \beta _{2} =  b \omega _{1} + d   \omega   _{2} $ for some $ a, b, c, d \in \ZZ_{\ell }$, so that 
\begin{equation}   \label{kell}   k_{\ell} :=  \left ( \begin{smallmatrix}  a  & b  \\ c   &  d  \end{smallmatrix} \right )     \in  \GL_{2  }  ( \ZZ_{\ell  }    )    .  
\end{equation}
is the change of coordinates matrix from $ (\beta_{1}, \beta_{2}) $ to $ (\omega_{1}, \omega_{2}) $,  
Then %then   % $ k_{\ell}^{-1} \cdot  [M_{r}]_{(e_{1},e_{2})   }  \cdot k _ {\ell }  =  [M_{r}]_{(f_{1}, f_{2})} $ and so 
$  k_{\ell}^{-1}  \iota_{\ell}   k_{\ell}   :   \Hb ( \QQ_{\ell} )   \hookrightarrow   \GL_{2} ( \QQ_{\ell} ) $  is
diagonal  with the  top  left   corner  entry   corresponding  to  $  \beta  _ { 1   }  $.  Consequently, the action of  geometric  Frobenius $ \mathrm{Frob}_{\lambda}^{-1} $  corresponds, via   (\ref{CFTforE}), to the action of $ h_{\ell} $  where \begin{equation}   \label{hellfrobenius}  \Hb (\QQ_{\ell}  )  \ni  h_{\ell} =  \begin{cases}   \mathrm{diag}   (                         \ell ,   \ell  )  &   \text{ if } \ell \text{ is inert},  \\   k _ { \ell }   \cdot        \mathrm{diag}  ( \ell   ,  1 )   \cdot    k _  { \ell   }   ^ {  - 1  }     &   \text{ if }  \ell  \text{ is  split}.    
\end{cases}   
\end{equation}

\subsection{The layers $ E[n]$}    Throughout, we  fix a compact open subgroup  
\begin{equation}  \label{groupU}   U \subseteq    \Hb(\Ab_{f})  \cap K 
\end{equation} such that  $ U $ is unramified at all primes $ \ell \in R $, i.e., $ U = U^{\ell} U_{\ell} $ where $ U_{\ell} =  \mathcal{O}_{\ell}^{\times}  $.      \label{modularlayers}     
For each $ \ell \in  R  $, set   \begin{equation}   \label{gell}     \Gb  ( \QQ _  { \ell }  )   \ni     g _ { \ell }        : =  \begin{cases}   \quad  \left  ( \begin{smallmatrix}    1/\ell   &   \\[0.02em] &   1   \end{smallmatrix} \right )   &   \text{ if } \ell \text{ is inert}    \\ k _  { \ell  }  \left ( \begin{smallmatrix}    1   &  \\[0.05em]   1/\ell   \,   & 1   \end{smallmatrix}  \right  )    k _  { \ell   }   ^ {  - 1   }   &  \text{ if  }  \ell  \text { is split}         \end{cases} 
\end{equation}    
and let $$  H_{\ell, g_{\ell}} : =  \mathbf{H}  (  \QQ_ {  \ell   }    )  \cap    g _  { \ell  }  K _ { \ell}  g  _ { \ell }  ^ { -    1    } .  $$      We  note  that $  H_ {  \ell,    g_{\ell }}   \subseteq   \OO _ { \ell }  ^  { \times  }  $ necessarily, since $ \mathcal{O}_{\ell}^{\times} $ is the unique maximal  compact  open subgroup of $ \mathbf{H}(\QQ_{\ell} )  $. We set     $    \Delta_{\ell} : = \OO_{\ell} ^ { \times } /  H_{\ell,   g_{\ell} }  $  and define $ \Delta_{1}   $   to  be     the trivial group. 
For  $  n   \in   \mathcal{  N    }  $, we  denote  $$ g_{n}  : = \prod_{\ell \mid n } g_{\ell} \in   \Gb ( \mathbb{A}_{f,[n]})  $$    
where $ g_{1} = 1 $ by convention. Abusing notation, we  consider $ g_{n} $ as elements of $ \Gb (\mathbb{A}_{f})   $ via the natural inclusion $   \Gb  (   \mathbb{A}_{f,[n]}   ) \hookrightarrow   \Gb  (  \mathbb{A}_{f}   ) $.    For each $ n \in \mathcal{N} $, set   
\begin{equation}   \label{Hgn}   U_{g_{n}} : =  U   \cap g_{n} K g_{n} ^{-1}   
\end{equation}    Then $  U_{g_{n}} $ are compact open subgroups of $ \mathbf{H}(\Ab_{f}) $ and  \begin{equation}    \label{Hnmodular}  
U_{g_{n}}   =     \left (  U^{[n]}\cap K^{[n]} \right )    \cdot    \prod   \nolimits     _{   \ell |  n }   H_{\ell,    g_{\ell}} 
\end{equation}
where $ U^{[n]} = U / \prod_{\ell \in [n]} \mathcal{O}_{\ell}^{\times} $.   
The groups $  U _{g_{n}} $ form a lattice (in the sense of order theory)  where      $ m  \mid    n    $ implies $  U _{g_{m}  } \supset  U _{g_{n}} $.   Moreover,     $  U _{g_{m}} / U _{g_{n}}  \simeq \Delta_{n/m} $
where   $$ \Delta _{k } : = \prod   \nolimits    _{ \ell | k } \Delta_{\ell} $$     for $ k \in \mathcal{N}     $.     For $ n \in  \mathcal{N}  $,   let     $ E[n]         $   be       the abelian   extension of $ E $ corresponding to $ U_{g_{n}} $ via (\ref{CFTforEnoC}), i.e.,   $ E[n] $ is   the     field such that $ \Gal(E^{\mathrm{ab}}/ E[n]) $ is identified with $  E^{\times} \backslash E^{\times} U _{g_{n}}   %   E^{\times} 
\subset \mathbf{H}(\QQ) \backslash \mathbf{H}(\Ab_{f})    $ via the Artin map.   Clearly,  $ E[m] \subset E[n] $ for $ m  \mid  n $.   In order to describe $  \Gal(  E[n]/E[m]  ) $, we need to take the units of $ \OO_{E}  $ into account. Let  \begin{equation} \nu_{n} : =  U_{g_{n}} \cap E^{\times}  \subset \OO_{E}^{\times}  \label{nunmodular} 
\end{equation} and  set    $  v   _     { n } : = | \nu_{n} | $.  Note that $ v _{n} \in \left \{ 1, 2 , 4, 6  \right \} $  since the the possible orders of the  group of units of imaginary quadratic fields   are     $     2 $, $ 4 $ or $ 6 $. Again, the groups    $ \nu_{n} $ form a lattice and $ m  \mid    n $ implies that $\nu_{m} \supset  \nu_{n} $.   Set   
\begin{equation}   \label{numn}   \nu^{m}_{n}  : = \nu_{m} /  \nu_{n}   
\end{equation}    
Then $  \nu^{m}_{n} = \nu_{m} U_{g_{n}} / U_{g_{n}}   $ is a subgroup of $ U_{g_{m}} / U_{g_{n}}   \simeq   \Delta_{   n  /  m     }  $.   
\begin{lemma}   \label{ESlayers}              For all $ m , n \in \mathcal{N} $ with $ m \mid n $,  the  Galois   group      $ \Gal ( E[n] / E[m] ) $ is isomorphic to $ (  \Delta_{ n /  m  }  )    /  \nu^{m}_{n} $. In particular,  the degree of extension $ E[n]/E[m] $ is $ | \Delta_{n/m}|\cdot  (v_{m}/v_{n})^{-1}         $.    
\end{lemma}
\begin{proof} We have   $ \Gal(   E ^ {  \mathrm{ab} }  / E[a] )   \simeq U_{g_{a}} E^{\times} / E^{\times} $ for any $  a  \in \mathcal{N} $.  Therefore 
\begin{align*} \Gal( E[n]/ E[m] )  &   \simeq U _{g_{m}} E^{\times} / (  U _{g_{n}} E^{\times}   )  \\  
&    \simeq (  U _{g_{m}}  \cdot    U _{g_{n}} E^{\times}    )/  ( U _{g_{n}} E^{\times}   ) \\
& \simeq  U _{g_{m}} / ( U _{g_{m}} \cap   U _{g_{n}}    E ^ {  \times   }    ).    \\ 
%& \simeq  H_{m} / ( \nu_{m}  H_{n} )  % (\Delta _ { m /  n }  ) / \nu^{m}_{n}  \\
& \simeq  U _{g_{m}} / (  \nu_{m} U _ {g_{n}} )  \\
&  \simeq (U _{g_{m} } / U _{g_{n}}  ) / ( \nu_{m} U _{g_{n}} / U _{g_{n}} ) \\
&   \simeq    \Delta_{n/m} / \nu^{m}_{n} .
\end{align*} 
The claim on cardinality is then immediate.  
\end{proof}

\begin{remark}Note that $ H_{\ell, g_{\ell}} = \Hb(\QQ_{\ell}) \cap g_{\ell} K_{\ell} g_{\ell}^{-1} $ 
coincides with $ U_{\ell} \cap g_{\ell} K_{\ell} g_{\ell}^{-1} $, so we may also 
denote this group by $ U_{\ell, g_{\ell}} $ in line with our notation. For more general Shimura data where $ \Hb $ is not necessarily a torus, 
the local group $ H_{\ell, g_{\ell}} $ rarely equals $ U_{\ell, g_{\ell}} $, and is also   not  necessarily a  subgroup of  $   U _{\ell }   $ (or  even  its  conjugates by $ \Gb(\QQ_{\ell})  $).              
%and 
This discrepancy leads to  significant additional technical difficulties  in   establishing  horizontal  norm   relations for the method described in   \S       \ref{LSZsec}. See Remark  \ref{volumeremark}.     
\end{remark}

\subsection{Lattice  Counting}  
In this  subsection, we recall  some  basic facts   on   lattices     and  establish a  combinatorial  lemma on trace maps with  respect  to    $ \Delta_{\ell} = \OO_{\ell}    ^   {   \times   }     /  H_{\ell, g_{\ell}   }   $.   
\begin{definition}   
Let $ \ell $ be any rational  prime and $ V  =  V_{\mathrm{std} }  =  \QQ_{\ell }  \oplus  \QQ  _ { \ell  }     $  be the standard vector space of dimension $ 2 $. A  \emph{lattice} in $ V $ is a $ \ZZ_{\ell} $-submodule spanned  by a  $  \QQ_{\ell} $-basis for    $ V $.  We   let  $  \mathcal{L}   $ denote the set of   all     lattices in $ V  $. The  \emph{standard lattice} $   L _ { \mathrm{std}    } \in    \mathcal{L} $ is the lattice generated by the standard basis.   
 \end{definition}   
Each $ g \in  \Gb ( \QQ_{\ell}  )  $ acts on $ V $ by linear transformations   and      sends a lattice to a lattice, thus  giving us a  left   action $ \Gb ( \QQ_{\ell}  )  \times  \mathcal{L}   \to   \mathcal{L}  $. The  stabilizer of the standard lattice   $  L _  {   \mathrm{std }   }    $     is precisely $  K _ {\ell }  =  \GL_{2}    ( \ZZ_{\ell} ) $   and therefore one obtains a  bijection   
\begin{align}   \label{latticebijection}    \Gb ( \QQ_{\ell} )   /   K_{\ell}   \xrightarrow{\sim}   \mathcal{L}  \\
g K_{\ell}  \mapsto  g    \cdot     L_{\mathrm{std}}  \notag    
\end{align}  
For $   \ell  \in  R $, consider $  E_{\ell}  =   E \otimes  \QQ_{\ell} $ as the standard  vector space with basis $  \omega_{1} \otimes 1,   \omega _{2} \otimes  1  $. The  standard  lattice then coincides with $  \OO_{\ell}  : =  \OO_{E}  \otimes  _ { \ZZ }    \ZZ_{\ell } $.     We note that $  \OO_{\ell} =  \OO_{\lambda}   $ (the ring of integers of $ E_{\lambda} $) if $ \ell $ is inert,  and $$  \OO_{\ell}  =    \ZZ _ { \ell }   \omega_{1}  \oplus   \ZZ_{\ell}   \omega_{2}   =    \ZZ_{\ell}  \beta _ { 1 }  \oplus  \ZZ_{\ell }  \beta _ { 2 }  =  \OO_{\lambda}  \oplus  \OO_{  \bar{\lambda}   }    $$        if  $   \ell  $   is   split.           For  $ \ell \in  R $,   let 
 \begin{equation}  \xi_{0} :   =   \sum     _  {   \gamma  \in     \Delta  _ {  \ell }    }   \ch (   \gamma   g  _  {  \ell   }    K _ { \ell } )  
 \end{equation} 
considered as an element of $ \in   \mathcal{C}_{\ZZ} ( \Gb (  \QQ_{\ell }    )    / K_{\ell } ) $.  Via  (\ref{latticebijection}), $ \xi_{0}$ represents an element in $ \mathbb{Z}\langle \mathcal{L}  \rangle  $.      %   For  $ \kappa  \in  \mathbb{F}_{\ell} $, let $ [\kappa]$ denote a lift to $ \ZZ_{\ell}$.       
 \begin{lemma}   \label{formallattice}  The element  of    $ \mathbb{Z}\langle \mathcal{L} \rangle   $ corresponding to $ \xi_{0} $ is  the formal sum of all lattices $ \eta  \cdot L_{\mathrm{std}}  $ where           
\begin{enumerate}       [label = \normalfont (\alph*)]      
\item  $      \eta \in   \Set* { \left (    \begin{smallmatrix}  1/ \ell   &      \\    i/\ell &    1   \end{smallmatrix}  \right )    \given i = 0,  \ldots, \ell - 1 }    \cup      \Set*{  \left (  \begin{smallmatrix} 1   & \\  &  1/\ell    \end{smallmatrix} \right )  }     $  if  $  \ell  $  is  inert,    \\[-0.7em]        
\item  $     \eta \in   \Set*{  k    _   { \ell }    \left (  \begin{smallmatrix} 1  &  \\   i /  \ell       &  1   \end{smallmatrix} \right )   k_{\ell}^{-1}        \given    i = 1, \ldots, \ell - 1 }    $  if  $  \ell  $   is   split.
\end{enumerate} 
In particular, $ | \Delta _{\ell} | $ equal $ \ell + 1 $ if $ \ell $ inert and $ \ell - 1 $ if $ \ell $ is split. 
\end{lemma}

\begin{proof}   First   observe  that  $   \OO _{  \ell } ^ { \times }   = \Hb ( \QQ_{\ell} )  \cap  K_{\ell} $ is  the  stabilizer  in $  \Hb    ( \QQ_{\ell } )  $ of the standard lattice $  L  _ { \mathrm{std } } =  \OO_{E}  \otimes   _ { \ZZ }  \ZZ_{\ell}  $, where $  \mathbf{H}(\QQ_{\ell}) $ acts on $ \mathcal{L} $ via $ \iota $.   Similarly,    $ H_{\ell,   g_{\ell}}   =  \Hb(\QQ_{\ell} ) \cap g_{\ell} K_{\ell} g_{\ell}   ^ {  - 1   }   $ is the stabilizer in $  \Hb    ( \QQ_{\ell} ) $ of the lattice $$  L _ { g_{\ell}  }   :  =  g_{\ell}  (  L _ {\mathrm{std} }  ) =   \mathbb{Z}_{\ell}      \langle g_{\ell}  \omega _{1}  ,  g_{\ell}  \omega _{2}  \rangle    \in   \mathcal{L} .  $$
Therefore, $ \xi_{0} $ represents the formal sum of lattices in the  $ \OO_{\ell}^{\times} $-orbit of $ L_{g_{\ell} } $. \\

\noindent a) If $ \ell $ is inert,  $ L  _ { g_{\ell} } =  \langle  \ell^{-1}   \omega_{1}  ,   \omega_{2}   \rangle $.  Since $ \ell \OO_{\ell} \subsetneq \ell   L   _   {  g_{\ell }    }     \subsetneq  \OO_{\ell}  , $ 
the lattices $ L $  in the orbit of $ L _ { g_{\ell} } $ under the action of $ \OO_{\ell}^{\times}    $  must also  satisfy  $   \ell  \OO_{\ell}   \subsetneq  \ell  L   \subsetneq     \OO_{\ell} $. As $ \OO_{\ell} =  \OO _ { \lambda}  =  
L_{\mathrm{std} } $ by  our convention, the lattices $ \ell L $ thus obtained  correspond to a subset of the  
set of one dimensional $\mathbb{F}_{\ell}$-vector subspaces of  $$   \mathbb{F}_{\lambda}  : =  \OO_{\lambda} / \ell \OO_{\lambda}  =   \mathbb{F}_{\ell}   [\omega_{1}]  \oplus  \mathbb{F}_{\ell}   [\omega_{2}]    $$
where $  [\omega_{i}]$ denotes the reduction of $ \omega_{i} \in \mathcal{O}_{\lambda} $ modulo $ \lambda $.       Since $ \OO _ { \ell } ^ { \times }  = \OO_{\lambda} ^ { \times }  $ acts transitively on $ \mathbb{F}_{ \lambda } ^  { \times } $,  the $ \mathcal{O}_{\ell}^{\times}$-orbit of $ L_{g_{\ell}}  $  is   the  set of  \emph{all} the lattices $ L   \in   \mathcal{L}   $ such that $ \ell \OO_{\ell} \subsetneq    \ell    L  \subsetneq     \OO_{\ell} $. Now the number of one dimensional $ \mathbb{F}_{\ell}$-vector subspace in  $ \mathbb{F}_{\lambda} $ is exactly $ | \mathbb{F}_{\lambda} ^{\times } / \mathbb{F}_{\ell} ^{\times} | = \ell + 1    $, since  each element $ \vec{x} \in \mathbb{F}_{\lambda}^{\times} $ spans the subspace  $ \mathbb{F}_{\ell} \vec{x} $ and any $ \vec{y} \in  \mathbb{F}_{\ell}^{\times} x $ determines the same subspace.  These $  \ell + 1 $ subspaces are spanned by $$ [\omega_{1}] , \quad  [\omega_{1}]  +  [ \omega_{2}], \,  \,     \ldots  ,   \,  \,    [\omega_{1}] + (\ell - 1 )   [\omega_{2}] \,   \,  \text{ and }  \,  \,  [\omega_{2}] . $$
Therefore, the  $ \ZZ_{\ell}$-lattices spanned by $$   \{  \ell^{-1}  \omega_{1}    ,   \omega_{2}  \}  , \quad  \{  \ell^{-1}  \omega_{1}  +     \ell^{-1} \omega_{2} ,   \omega_{2}     \}   ,  \,       \ldots  ,     \,    \{ \ell^{-1}  \omega_{1}  +  \ell^{-1} (\ell- 1)   \omega_{2} ,   \omega_{2}   \}  \,   \text{ and }  \,  \{  \omega_{1} ,  \ell^{-1}  \omega_{2}  \}     $$
represent the orbit of $ \mathcal{O}_{\ell}^{\times} $ on $ L_{g_{\ell}}$.  These are exactly the lattices $ \eta \cdot L_{\mathrm{std}} $  as in the  claim.     \\

\noindent  b) If $  \ell  $  is    split on the other hand,  the  group    $ H  _ { \ell,   g_{\ell   }    } $ is the stabilizer in $ \OO_{\ell}  ^ { \times  }  $ of the lattice $   L  _ {   g _ { \ell }   }    =  \langle \beta   _ {  1  }      + \ell ^{-1}   \beta _  {2} ,    \beta_{2}  \rangle $.  Now   $ \OO_{\ell} ^ { \times } =   \OO_{\lambda} ^ {\times } \times  \OO_{ \bar{ \lambda}   } ^ { \times }  \cong  \ZZ_{\ell} ^  { \times }  \times  \ZZ_{\ell } ^  { \times } $ acts on $ \OO_{\ell} = \ZZ_{\ell} \beta_{1}  \oplus  \ZZ_{\ell}  \beta_{2} $ componentwise.  So   if    $   \gamma  =    (\gamma_{1},  \gamma_{2})  \in   \OO_{\lambda} ^{\times }  \times  \OO_{   \bar{  \lambda } } ^ {  \times  }   $,  then   $$   \gamma   \cdot       L _ {   g  _ {  \ell  }    }   =   \langle  \gamma _ { 1 }   \beta  _ { 1  }    +    \ell^{-1}      \gamma_{2}  \beta_{2} ,     \gamma _ { 2}  \beta _ { 2 }  \rangle  =    \langle   \beta _ { 1 }  +   \ell  ^ { - 1 }   \gamma _ { 1 } ^{-1}   \gamma  _{  2 }     \beta _ {   2    }  ,   \beta_{2}     \rangle . $$
This lattice is   equal to $ L _  {g_{\ell}  }$ if and only if $ \gamma _{  1} \gamma_{2} ^ {  -1 } \in 1 + \ell \ZZ_{\ell} $.  Thus,  there are  exactly   $  \ell -  1  =  | \ZZ_{\ell} ^ { \times }  /  ( 1 + \ell \ZZ_{\ell}  )  | $  distinct lattices in the orbit of $ \OO_{\ell}^{\times} $ on $ L_{g_{\ell} } $ and we find representatives by taking $ \gamma_{1} = 1 $ and $ \gamma_{2} =  i $  for $ i = 1, \ldots, \ell - 1 $.    
\end{proof}   
In what follows, we denote $ \gamma_{i}  :  =  \left ( \begin{smallmatrix} 1 / \ell  & \\   i /  \ell     &     1   \end{smallmatrix}   \right )  $ for $  i = 0  , \ldots, \ell - 1 $ and   $   \gamma _{\ell}:  =  \left (  \begin{smallmatrix} 1   \\ &    1 /  \ell      \end{smallmatrix}   \right ) $.    
\begin{lemma} \label{Telldecompose}  For all $ \ell \in R $,   $ \ch ( K _{\ell} \sigma_{\ell}^{-1}   K  _{\ell }  ) \in   \mathcal{C}_{\ZZ}(\Gb(\QQ_{\ell}   )    /K_{\ell}) $ corresponds   to    $ \sum_{i=0}^{\ell} \gamma_{i} \cdot L_{\mathrm{std}}  $ in $ \ZZ\langle \mathcal{L}  \rangle   $.     
\end{lemma}

\begin{proof} This amounts to describing the orbit of $ K_{\ell} $ acting on the lattice $ \langle \ell^{-1}   \omega_{1} ,   \omega_{2} \rangle $ which leads to  a similar  argument as in part (a) of Lemma  \ref{formallattice}.          
\end{proof}

\subsection{Norm Relations}   \label{normrelationmodularsec}                  
For any prime   $ \ell \in  R   $,   define   \emph{local test data}            
\begin{equation}   \label{zetamodularvector}       \zeta_{ \ell }   :    =   \mathrm{ch}(K_{\ell} )  -  \mathrm{ch} ( g    _  { \ell } K _ { \ell }  )    , \quad \quad  \zeta_{0, \ell} = \ch(K_{\ell})  
\end{equation} 
in $ \mathcal{C}_{\ZZ}\big(\Gb(\QQ_{\ell})/K_{\ell} \big) $.  For  $ n \in \mathcal{N}   $, set 
\begin{equation} \zeta_{n} : = \otimes _ { \ell | n }  \zeta_{\ell} \in   \mathcal{C}    _{\ZZ}  \big ( \Gb(\QQ_{[n]})   / K_{[n]}   \big   ) 
\end{equation} 
which consists of $ 2^{ \#     [ n ] } $ terms of the form $ \mathrm{ch}(gK) $ with   coefficients in $  \left \{ \pm 1  \right \}   $.    
Denote by $ \Ab_{f}^{R} $ the restricted tensor product of $ \QQ_{\ell}$ for $ \ell \notin R $ and write $ K = K_{R} K^{R} $, $ U = U^{R} U_{R} $ where $ K_{R} = \prod_{\ell \in R} \GL_{2}(\ZZ_{\ell}) $ and $ U_{R} =  \prod_{\ell \in R }  \mathcal{O}_{\ell}  ^  {  \times  }    $.    
Fix any  $$ \zeta^{R} \in    \mathcal{C}_{\ZZ} \big (  \Gb(\Ab_{f}^{R})/K^{R}  \big   )   $$
that is invariant under the action of $ U^{R}  $
%Denote $ K_{R} = \prod_{\ell \in R}\GL_{2}(\ZZ_{\ell}) $, $ K^{R} = K / K_{R} $.  Fix any  $$  \zeta^{R} \in    \mathcal{C}_{\ZZ}( \Gb(\Ab_{f}^{R}/K^{R})  . $$
and  set  $$   \zeta_{n  ,   f   }  : =  \zeta^{R} \otimes \zeta_{0, R}^{n}  \otimes   \zeta_{n}  \in  \mathcal{C}_{\ZZ}(\Gb(\Ab_{f})/K) $$ 
where $ \zeta_{0, R}^{n}= \otimes_{ \ell \in R \setminus [n] }   \zeta_{0, \ell}   $.  
%where $ \Ab_{f}^{R} $ denotes the restricted tensor product of $ \QQ_{\ell} $ for $ \ell \notin R $ and $ K^{R} = K / K_{R} $.  
\begin{definition} For $ n \in \mathcal{N} $,  the  $ n $-th \emph{Euler system divisor class}  is  defined   to    be  $$ y_{n}   =  \psi \big (  v_{1} v_{n}^{-1}      \cdot [ \zeta _{n, f} ] \big)   \in \mathcal{Z}_{K}      $$
where $ \psi $ is as in (\ref{psiparamodular}) and $ v_{n} $ denotes the cardinality of $ \nu_{n} $   (\ref{nunmodular}).         We call $ y_{1} $ the \emph{bottom class}  of the  system.
\end{definition}  
The CM   divisors $  y _{n}  $  are defined over $ E [ n ] $, i.e.,  $ y_{n} \in \mathcal{Z} ( U_{g_{n}} ) $.   Indeed, $    (U^{[n]} \cap K^{[n]}) $ acts trivially on $  \zeta^{R} \otimes \zeta_{0, R}^{n} $   by  assumption      and   $   U   _  { \ell ,   g_{\ell}   }    =   H_{\ell,   g_{\ell} }     \subset  \mathcal{O}_{\ell} ^ {\times}      $ stabilizes $ \zeta_{\ell} $     by  construction.      Moreover, for any $ \lambda \in \Lambda $ above  a   prime    $ \ell \in  R   \setminus [n] $, the  class   $  y _{n} $ is unramified over $ \lambda $ as $  U _{g_{n}} $ can be written as $  \mathcal{O}_{\ell}^{\times} U^{\ell} $ for some subgroup $  U^{\ell} $ of $ \Hb(\Ab_{f}/\QQ_{\ell}) $.      Thus,   the action of the  geometric  Frobenius $ \mathrm{Frob}_{\lambda}^{-1}      $ at  a prime $ \lambda $ on the  divisor     $ y_{n} $ is well-defined for any such $ \lambda $.   Let $$ \mathrm{Tr}^{E[n\ell]}_{ E[n]}  : \mathcal{Z}(U_{g_{n\ell}})  \to   \mathcal{Z}(U_{g_{n}}) $$ 
denote the trace map induced by summing over conjugates by  elements in  $ \Gal(E[n  \ell    ]  /  E[n] )   $.  
\begin{theorem}   \label{CMnormtheorem}            For all $ \ell \in R $ and $ n  \in  \mathcal{ N }  $ such that $ \ell  \nmid  n  $,     we have  \begin{equation*}  \mathfrak{H}_{\ell, *} (  \mathrm{Frob}_{\lambda}^{-1} )  y_{   n   }  =   \mathrm{Tr}^{E[n\ell]}_{E[n] }  (   y    _{n   \ell   } )  
\end{equation*}
as elements of $ \mathcal{Z}(U_{g_{n}}   )     $.    
\end{theorem}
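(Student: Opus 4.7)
The plan is to use the isomorphism $\psi$ of (\ref{psiparamodular}) to translate the claim into $\mathcal{F}$, then reduce it to a local combinatorial identity at $\ell$. First I would rewrite both sides in $\mathcal{F}$: since Hecke and Galois actions commute and $\mathrm{Frob}_\lambda$ acts by left translation by $h_\ell$ (see (\ref{hellfrobenius})), the left side equals $\psi\bigl(|\mathcal{O}_E^\times|v_n^{-1}[\mathfrak{H}_\ell(h_\ell)\zeta_n]\bigr)$. For the right side, Lemma \ref{Galoismodular} identifies $\mathrm{Gal}(E[n\ell]/E[n])$ with $H_n/\nu_n H_{n\ell}$; since $\nu_n \subset E^\times$ acts trivially on $\mathcal{F}$, expanding the trace to a sum over $\Delta_\ell \cong H_n/H_{n\ell}$ representatives chosen inside $\mathcal{O}_\ell^\times \subset \mathbf{H}(\mathbb{A}_f)$ overcounts each Galois orbit by the factor $v_n/v_{n\ell}$, which absorbs the $v_{n\ell}^{-1}$ in the definition of $y_{n\ell}$. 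Canceling the common prefactor, the theorem reduces to
\begin{equation*}
\bigl[\mathfrak{H}_\ell(h_\ell)\,\zeta_n\bigr] = \Bigl[\sum_{h \in \Delta_\ell} h\,\zeta_{n\ell}\Bigr] \quad \text{in } \mathcal{F}.
\end{equation*}

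Since $\zeta_n$ and $\zeta_{n\ell}$ agree outside $\ell$ and all operators act only at $\ell$, it suffices to verify the local identity $\mathfrak{H}_\ell(h_\ell)\,\ch(K_\ell) \equiv \sum_{h \in \Delta_\ell} h\,\zeta_\ell$ in $C_c(\mathbf{G}(\mathbb{Q}_\ell)/K_\ell)$ modulo the global $\iota(E^\times)$-action, which is trivial on $\mathcal{F}$. Expanding the left side via Lemma \ref{Telldecompose} gives $\ell\,\ch(K_\ell) - \sum_\gamma \ch(h_\ell\gamma K_\ell) + \ch(h_\ell^2\tau_\ell K_\ell)$ with $\gamma$ over the $\ell+1$ representatives of $K_\ell\sigma_\ell K_\ell/K_\ell$, while the right side equals $|\Delta_\ell|\ch(K_\ell) - \sum_\eta \ch(\eta K_\ell)$ with $\eta$ from Lemma \ref{formallattice}. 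In the split case, after conjugating to the diagonal basis by $k_\ell$, a direct computation with upper-triangular representatives $\gamma$ shows that $h_\ell\gamma K_\ell$ is either $K_\ell$ (from the $j=0$ representative), one of the $\ell-1$ cosets $g_{\ell,\kappa}K_\ell$ of Lemma \ref{formallattice}(b) (for $\kappa \in \mathbb{F}_\ell^\times$), or $h_\ell^2\tau_\ell K_\ell$ (from the ``anti-diagonal'' representative); the last of these cancels $\ch(h_\ell^2\tau_\ell K_\ell)$, and both sides collapse to $(\ell-1)\ch(K_\ell) - \sum_{\kappa \in \mathbb{F}_\ell^\times}\ch(g_{\ell,\kappa}K_\ell)$ as a direct equality in $C_c$, with no $E^\times$-action required.

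In the inert case, $h_\ell = \ell^{-1}I$ is central; Lemma \ref{Telldecompose} identifies the formal sum $\sum_\gamma \ch(\gamma K_\ell)$ with $\sum_\eta \ch(\eta K_\ell)$ appearing on the right, so the difference of the two sides is a $\mathbb{Z}$-linear combination of terms of the form $\ch(\xi) - \ell^{-1}\ch(\xi)$. These vanish in $\mathcal{F}$ because $\ell \in E^\times$ acts via $\iota$ as $\ell I$ at every prime: this is nontrivial at $\ell$ but contained in $K^\ell$ by the standing hypothesis that $K^\ell$ contains $\mathrm{diag}(\ell,\ell)$, so left multiplication by $\ell$ descends to the identity on $\mathcal{F}$. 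The principal obstacle is the careful bookkeeping of the unit factors $v_n, v_{n\ell}$ coming from $\nu_n$, and the key conceptual point is that the inert case genuinely relies on the hypothesis on $K^\ell$, whereas the split case holds already at the level of local Schwartz functions.
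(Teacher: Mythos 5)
Your argument is correct and follows essentially the same strategy as the paper's proof: reduce via $\psi$ and the bookkeeping of the unit factors $v_n$ to a local identity at $\ell$ in $\mathcal{F}$, verify it by the lattice counts of Lemmas \ref{formallattice} and \ref{Telldecompose}, and invoke the $\iota(E^\times)$-action together with the standing hypothesis $\mathrm{diag}(\ell,\ell)\in K^\ell$ only in the inert case. Your observation that the split identity already holds directly in $C_c(\mathbf{G}(\QQ_\ell)/K_\ell)$ matches the paper's reduction to equation (\ref{mainsplit}).
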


\begin{proof} 
By the properties of the isomorphism  $ \psi   $   (\ref{psiparamodular}),  it suffices to establish that    
\begin{equation}   \label{reduction1}  \mathfrak{H}_{\ell, *} ( h_{\ell} ) \cdot  [ \zeta _{n,  f}   ]     \overset{?}{=}    \sum  _ { \gamma \in \Gal(E[n\ell]/E[n])}      \gamma \cdot [   v_{n}/v_{n\ell }    \cdot   \zeta _{ n \ell   ,   f     } ]  \end{equation}    
in $ \mathcal{F} $, where  $ h_{\ell} $ is as  in    (\ref{hellfrobenius}).   Since $ E^{\times} = \Hb(\QQ) $ acts trivially on $ \mathcal{F} $ and since $ \nu_{n\ell} \subset \nu_{n} \subset E^{\times} $,  we have $$ [ v_{n \ell}/ v_{n} \cdot \zeta_{n\ell, f} ] =  \sum_{  \delta \in \nu^{n}_{n \ell}      }   \delta   \cdot     [    \zeta_{n\ell, f}]  $$ 
where $ \nu^{n}_{  n     \ell} $ is as in (\ref{numn}).   
So Lemma   \ref{ESlayers} and  the reciprocity law (\ref{GaloisonCM}) imply that    (\ref{reduction1}) is equivalent to    
\begin{equation}   \label{reduction2}  \mathfrak{H}_{\ell, *} ( h_{\ell} ) \cdot  [ \zeta _{n,  f}   ]     \overset{?}{=}    \sum  _ { \gamma \in   \Delta_{\ell} }          \gamma \cdot [  \zeta _{ n \ell   ,   f     } ]  \end{equation}  
Now observe that the components of $ \zeta_{n} $ and $ \zeta_{n \ell} $ agree  away from $ \ell $. Since  both   $ \mathfrak{H}_{\ell, *}(h_{\ell}) $ and $ \Delta_{\ell} \subset  \mathcal{O}_{\ell}^{\times }   \subset    K_{\ell} $ only affect  the   components at $ \ell $,  relation    (\ref{reduction2}) would follow  from
\begin{equation}   
\label{reduction3} 
%\begin{split}   
\Big ( \ell   \cdot          \mathrm{ch}   (  K )  _ {  *  }      - ( h_{\ell} ,   \mathrm{ch} (  K     \sigma  _  {   \ell  }  ^{-1}  K     ) _ { *  }   )
+            (  h_{\ell } ^ {  2  } ,          \,    \mathrm{ch} ( K \tau  _ {  \ell  }  ^{-1}       K  ) _{*}  )  \Big  )   \cdot [\ch(K ) ]      \overset{?}{=}  |\Delta_{\ell} | \cdot  [  \ch(K)  ]  -   \sum_{\gamma \in \Delta_{\ell}}   [\ch(\gamma   g_{\ell}  K  ) ]     
%\end{split}   
\end{equation}
in $ \mathcal{F} $.\footnote{We could replace $ K $ by $ K_{\ell}$ everywhere and attempt to prove this relation in $ \mathcal{C}_{\ZZ}(\Gb(\QQ_{\ell})/K_{\ell})$  at this stage, but the resulting equality doesn't hold at inert primes. We have yet to use  the  fact  that  the      geometric     Frobenius $ h_{\ell} $ for $ \ell $ inert  acts trivially.}  
As in Lemma \ref{Telldecompose}, we denote $ \gamma_{i} :  =  \left ( \begin{smallmatrix} 1 / \ell  & \\   i /  \ell     &     1   \end{smallmatrix}   \right )  $ for $  i = 0  , \ldots, \ell - 1 $ and   $   \gamma _{\ell}:  =  \left (  \begin{smallmatrix} 1   \\ &    1 /  \ell      \end{smallmatrix}   \right ) $.    \\    %   Denote $ \gamma _{0}:  =  \left (  \begin{smallmatrix} 1 \\ &  \ell \end{smallmatrix}   \right ) $ and $ \gamma_{i} :  =  \left ( \begin{smallmatrix} \ell  &  i \\ &    1   \end{smallmatrix}   \right )  $ for $  i = 1 , \ldots, \ell . $ 
%Then $$ \ch  ( K_{\ell} \sigma_{\ell}^{-1}  %K_{\ell} ) = \sum\nolimits _{i=0}^{\ell} \ch( \gamma_{i} K _ { \ell  }    ) $$ 
%by Lemma \ref{Telldecompose}. \\   

\textit{Case 1: $\ell$ is inert.}  Recall that (C4)  requires  $ K^{\ell} $  to contain  the element     $   \chi^{\ell} : =    \mathrm{diag}(\ell,   \ell) \in \Gb(\QQ) $ embedded diagonally in $ \Gb(\mathbb{Ab}_{f}/\QQ_{\ell}) $. So  $ h_{\ell} K = h_{\ell}  \chi^{\ell} K  $ and  clearly, $ h_{\ell}  \chi^{\ell}  = \mathrm{diag}(\ell ,\ell)   \in   \mathbf{Z}(\QQ)  \subset \Hb(\QQ)   $.  Therefore $$  h_{\ell} \cdot [ \ch(K  )  ]  = [ \ch ( h_{\ell} K )]  = [ \ch ( h_{\ell} \chi^{\ell} K)] = \ch[K]  .     $$ 
So (\ref{reduction3}) would follow from the equality   
\begin{equation}     \label{reduction4intert}
\ell   \cdot  \mathrm{ch}   (  K )   -  \bigg (  \sum  \nolimits _{ i = 0 }   ^ {  \ell   }      \ch ( \gamma_{i} K   )  \bigg  )          +   \mathrm{ch} ( K )  
 \overset{?}{=}         (  \ell + 1  )  \cdot  \mathrm{ch} ( K  )  -  \sum   \nolimits   _  { \gamma   \in   \Delta  _ { \ell   }       }   \ch (    \gamma   g _ { \ell }  K    ).
\end{equation} 
in     $   \mathcal{C}_{\ZZ}( \Gb (\Ab_{f})     / K ) $.
Canceling  $  (\ell+1)  \cdot   \ch(K)     $ on both sides of (\ref{reduction4intert}),  we are reduced to showing that     $$  \sum   \nolimits  _{ i = 0 } ^ {  \ell  }   \ch ( \gamma_{i} K )   \overset{?}{=}     \sum   \nolimits    _  { \gamma  \in   \Delta _ { \ell }  }      \ch (    \gamma    g _ { \ell }    K ). $$   
But this follows from the local equality established in Lemma \ref{formallattice} (a).    \\

\noindent  \textit{Case 2: $ \ell $ is split.}    Arguing similarly as in the inert case,  (\ref{reduction3}) would follow from the local equality 
\begin{align}   \label{reduction4split}        \ell   \cdot          \mathrm{ch}   (  K    _ { \ell   }  )   -      \mathrm{ch} ( h_{\ell} K _{ \ell  }  \sigma   _  {   \ell  }   ^ { - 1 }    K  _  {  \ell  }   ) +    \mathrm{ch} (  h_{\ell} ^  { 2 }     K  _ { \ell  }   \tau   _ { \ell }   ^ {  - 1   }     K )   \overset{?}{=}   (  \ell -  1 )  \cdot  \mathrm{ch}  ( K _ { \ell  }   )         -       \sum  _ {  \gamma   \in   \Delta _ { \ell }          }   \ch (    \gamma  g_{\ell}  K   _ {  \ell  }   ) 
\end{align} 
in $ \mathcal{C}_{\mathbb{Z}}( \Gb(\QQ_{\ell} ) / K_{\ell} )   $.  Since the matrix $ k_{\ell }$ (\ref{kell}) lies in $K_{\ell} $, we see  from  Lemma  \ref{Telldecompose}    that $ \ch(K_{\ell} \sigma_{\ell}  ^ {  - 1   }    K_{\ell} )  = \sum _{ i = 0 } ^ { \ell } \ch ( k_{\ell} \gamma_{i}   k_{\ell}^{-1} K_{\ell} ) $  as  well.      Now note that $$   h_{\ell}  (  k_{\ell} \gamma_{0} k_{\ell}^{-1}  )  K_{\ell}  =  K_{\ell}  , \quad   \text{  and   }      \quad  h_{\ell}  (  k_{\ell} \gamma_{\ell}   k_{\ell}^{-1}    )        K_{\ell}  =   k_{\ell}   \left (    \begin{smallmatrix}  \ell &    \\ & 1  / \ell   \end{smallmatrix}   \right  )     K_{\ell} =     h_{\ell}^{2}   \tau_{\ell}   ^ {  -  1   }     K_{\ell}   .    $$    
So the   left hand side of (\ref{reduction4split})   equals   $ (  \ell - 1 ) \cdot \ch( K_{\ell}  ) - \sum _{ i = 1}^{\ell   -   1    }  \ch \big  ( h_{\ell} ( k _ { \ell }  \gamma _{i} k_{\ell } ^{-1}    )  K_{\ell} \big  )   .   $
Thus   (\ref{reduction4split})   would follow  if $$  \sum  \nolimits    _ { i =  1 } ^ { \ell   -   1     }  \ch ( h_{\ell} (  k _ { \ell }   \gamma _  { i }  k_{\ell } ^{-1}   )      K _ { \ell }  )   \overset{?}{=}      \sum   \nolimits      _  {      \gamma   \in        \Delta _{\ell }   } \ch ( \gamma g_{\ell } K_{\ell}  )     .   $$  But since $ h_{\ell} k_{\ell} \gamma_{i} k_{\ell}^{-1}       k_{\ell}^{-1}     K_{\ell}  =   k_{\ell}   \left  (  \begin{smallmatrix}  1 \\  i / \ell &  1   \end{smallmatrix}   \right   )  K_{\ell} $, this   is a consequence of    Lemma  \ref{formallattice}(b). 
\end{proof}

\subsection{Projection to Galois cohomology}   \label{projection}    Let us now recover the norm relations from the  introduction with split  primes   incorporated.     Suppose that  $ K = \widehat{\Gamma}_{0}(N) $ (see Example \ref{Gamma0(N)})  where $ N   $ is  any  positive  integer.   Then $ R $ is the set of all primes $ \ell $ that do not divide $ N \cdot \mathrm{disc}(E)$. %  We  assume that $ (\omega_{1}, \omega_{2})$ is positively oriented with respect to $ (1, -i)$, so that the point $x_{\iota}$ in Lemma \ref{uniqueCMpoint} equals $ h_{0} $.       
We   assume that the Heegner hypothesis is satisfied, i.e., all primes dividing $ N $ are  split in $ E $.   % Set  $ \ch(g_{N} K ) $ is invariant under $ \Hb(\Ab_{f}^{R}) \cap K^{R}  \subset $.   
Set   $$  U = \Hb(\Ab_{f}) \cap g_{N} K g_{N}^{-1}  $$ % g_{N}^{-1} $  $$ \zeta^{R}  = \ch(g_{N} K )  $$
where $ g_{N} $ is as in (\ref{gN}) and pick $ \zeta^{R} = \ch(g_{N}K^{R} ) $.  By   Lemma   \ref{Heegnerlemma},   the    bottom class $ y_{1} $  in our Euler system is   a Heegner point in $ \mathcal{S}_{K}(E[1])$ where $ E[1] $ equals the Hilbert class field of $ E $. As noted in \S \ref{Heegnersec},   the   field  $ E[n] $ is the  ring class extension   corresponding to   the    adelic order  whose groups of units     equals     $ U_{g_{n}} =  U \cap g_{n}Kg_{n}^{-1} = \Hb(\Ab_{f}) \cap  g_{n} g_{N}   K (g_{n}g_{N})^{-1}  $.

Now let $ A $ be an elliptic curve of conductor $ N $  as  in   the introduction.  Identify  $ \mathcal{S}_{K} $ with $ Y_{0}(N) $ via (\ref{stdidenGamma0}), so that the unique compactification   $ \overline{\mathcal{S}}_{K}$ is identified with $X_{0}(N) $.   Recall that $ \mathrm{J}_{K}$  denotes the Jacobian variety of $ \overline{\mathcal{S}}_{K}$.   Let $ x_{0} \in X_{0}(N)(\CC) $ denote cusp corresponding to the class of $ \infty $.   Then $ x_{0} $ is defined over $ \QQ $  \cite[\S 1.2]{Rohrlich}, and there is a unique morphism  $   \overline{\mathcal{S}}_{K} \to   \mathrm{J}_{K} $ 
of $ \QQ $-schemes which sends $ x \in \mathcal{S}_{K}(F) $ to the class of $  x - x_{0} $ in $  \mathrm{J}_{K}(F)    $    
for any extension $ F $ of $ \QQ $   \cite[\S 2]{Milne}. We let $$   \jmath_{F} :  \ZZ \langle   \overline{ \mathcal{S}   }    _{K}(F) \rangle  \to \mathrm{J}_{K}(F)     $$
denote its unique extension to divisors defined on $ F $.     Let  $ \pi :   X_{0}(N) \to A     $    be the dominant map guaranteed by the modularity theorem, which sends the  rational   cusp $ x_{0}$  to the zero element of $ A $  and  let   $$  \mathrm{J}(\pi) :  \mathrm{J}_{K} \to A $$ be the unique morphism  induced by the universal property  of   Jacobians   \cite[Proposition 6.1]{Milne}.  Fix $ p $ to be any rational  prime.   For  each $  n  \in   \mathcal{N} $, we have a $ \Gal(E[n]/E) $-equivariant  composition   
\begin{equation}   \label{longcomposition}     \mathcal{Z}(U_{g_{n}})   \hookrightarrow  \ZZ  \langle  \overline{\mathcal{S}}_{K}(E[n])   \rangle   \xrightarrow{\jmath}  \mathrm{J}_{K}(E[n])   \xrightarrow{\mathrm{J}(\pi)}    A(E[n]) \to  \mathrm{H}^{1}(E[n],   \mathrm{T}_{p}(A) )    
\end{equation}   
Let $ \mathcal{N}^{p}$ denote the set of $ n \in  \mathcal{N} $ not divisible by $ p $. For any $ n \in  \mathcal{N}^{p}$,   we    define   $$  z_{n} \in \mathrm{H}^{1}(E[n],  \mathrm{T}_{p}(A))  $$  to be $  1/n    $    times   the image of $ y_{n} $ under this map.    % The  following result  is a  consequence  of   Theorem  \ref{CMnormtheorem} and the discussion in  Remark  \ref{normalizeremark}.         
%By  construction   
\begin{corollary} \label{heegcorollary}      For all $  n \in  \mathcal{N}^{p}  $ and $ \ell $   a prime with $ \ell n \in \mathcal{N}  ^  { p   }    $, we have $$ 
P_{\ell}(\mathrm{Frob}_{\lambda}^{-1}) ( z_{m} ) =   \mathrm{cores}^{E[n\ell]}_{E[n]} (z_{m\ell}) $$  
where $ P_{\ell}(X) $ denotes the   reverse    characteristic polynomial of $ \mathrm{Frob}_{\ell}^{-1} $ acting on $ \mathrm{T}_{p}(A) $.     

\end{corollary}

\begin{proof} This follows by Theorem \ref{CMnormtheorem} and the fact that pre-composition of $T_{\ell}$ for $ \ell \nmid n $ with  (\ref{longcomposition}) equals multiplying (\ref{longcomposition}) with  $ a_{\ell}  $ (see the proof of \cite[Proposition 3.7]{grosskoly}).   
\end{proof} 
\begin{remark} A partial result of this type is stated in \cite[Proposition 3.10]{darmon}, which says that given  a  class $ y $ at level $ E[n\ell] $, there exists another class $ y ' $ at level $ E[n] $ such that the trace of of $ y $ down to $ E[n] $ equals the image of $ y' $ under an appropriate Euler factor.   It is  however  unclear  from  the  statement  alone   if one can use this to construct an infinite system as in Corollary   \ref{heegcorollary}.       %to an infinite  system.   
\end{remark}

\begin{remark}By assuming that $a_{p} = p + 1 - \tilde{A}(\mathbb{F}_{p})$ is invertible in $\mathbb{Z}_{p}$, one can extend this system along the anticyclotomic $\mathbb{Z}_{p}$-extension of $E$ and thereby obtain a genuine Euler system, as,  for instance,   required    in \cite{JNS}. See \cite{loe}, which provides a fairly general method for carrying out this extension.          
\end{remark} 

\subsection{Cohomological formulation}   \label{cohoform}      The horizontal Euler system of Theorem \ref{CMnormtheorem} is formulated in terms of divisors on modular curves, since the Tate modules of Jacobians provide “access’’ to the Galois representation $\mathrm{T}_{p}(A)$. For higher-dimensional Shimura varieties, interesting (irreducible) Galois representations occur in the middle-degree $p$-adic étale cohomology,    just as $\mathrm{T}_{p}(A)   $ appears as a quotient of $   \mathrm{T}_{p, K} \simeq   \mathrm{H}^{1}_{\et}(X_{0}(N),\mathbb{Z}_{p}(1))$. In these higher-dimensional settings, however, there is no analogue of  Jacobian that  serves as    a   replacement  for     \'{e}tale     cohomology.  Consequently,  one  must carry out all constructions at the level of    cohomology itself.      Let us briefly explain how this may be done in the case of modular curves, so that the reader can  see the parallel with higher dimensions  more  easily.    

Let $ \mathbf{T}  \subset \Hb $ denote the torus of norm one elements, i.e., $ \mathbf{T}(\QQ) = \left \{   \omega   \in E \, | \,  \omega  \bar{\omega } = 1 \right \} $ where $ \bar{\omega} $ denotes the  complex  conjugate of $ \omega $.  There is a  norm  map $  \nu :  \Hb \to  \Tb $ which on $ \QQ $-points sends $  \omega  \in E $ to $  \omega/\bar{\omega} \in \Tb(\QQ) $.\footnote{The corresponding quotient map on adelic quotients corresponds to anticyclotomic extensions of $ E$.}   Let us denote $ \tilde{\Gb} = \Gb \times \mathbf{T} $. Then the diagonal   map $$ \tilde{\iota} = \iota \times  \nu  : \Hb  \to \tilde{\Gb}  . $$
extends to a morphism of Shimura data, where the underlying $ \tilde{\Gb}(\RR) $-conjugacy class $ \tilde{\mathcal{X}} $ of cocharacters is the class of $ h_{\mathrm{std}} \times (\nu \circ h_{0}  )    $.   The reflex field of the datum  $ ( \tilde{\Gb}, \tilde{\mathcal{X}}) $ is then $ E $.    Given a compact open subgroup $ \tilde{L} \subset \tilde{\Gb}(\Ab_{f}) $,  we let $ \tilde{\mathcal{S}}_{\tilde{L}} $ denote the corresponding canonical model of the Shimura variety attached to 
$ (\tilde{\Gb},   \tilde{\mathcal{X}}   ) $.   If $ \tilde{L} = KC $ where $ K \subset  \Gb(\Ab_{f}) $ and $ C \subset \Tb(\Ab_{f}) $, then we have a canonical isomorphism $$ \mathcal{\tilde{S}}_{\tilde{L}}   \xrightarrow{\sim}   \mathcal{S}_{K, E_{C} }    =   \mathcal{S}_{K  ,  E }  \times_{\Spec   E    } \Spec E_{C}   $$ 
of $ E $-schemes,   where $ E_{C} $ is a finite  dihedral extension of $ E $ determined by a  Shimura-reciprocity law for the datum $ (\Tb,  \left \{ \nu \circ  h_{0 }    \right \}   ) $  similar to the  one    in  \S \ref{canonicalmodsec}.        
For each $ \tilde{g} \in  \tilde{\Gb}(\Ab_{f})$ and compact open subgroups $ V \subset  \Hb(\Ab_{f}) $, $  \tilde{L}  \subset \Gb(\Ab_{f}) $ satisfying $ V \subset \tilde{g}L\tilde{g}^{-1} $,     we have a finite morphism $$    \tilde{\iota}_{\tilde{g}, \tilde{V}, \tilde{L}}  =   [\tilde{g}] \circ \iota_{ V, \tilde{g}\tilde{L}\tilde{g}^{-1} }  :   \mathcal{T}_{V}  \to     \tilde{\mathcal{S}}_{\tilde{g}\tilde{L}  \tilde{g}^{-1}}    \to  \tilde{\mathcal{S}}_{\tilde{L}}  $$
analogous to the  map  (\ref{Deligneig}),      
%let us denote $$ N(V) :=  \mathrm{H}^{0}_{\et}(\mathcal{T}_{V},  \ZZ_{p}), \quad \quad  \quad  M(\tilde{L})  =  \mathrm{H}^{2}_{\et}(\tilde{\mathcal{S}}_{\tilde{L}}, \ZZ_{p}(1))  $$
%the \emph{arithmetic}   \'{e}tale cohomology\footnote{} of $ \mathcal{T}_{V}$. Whenever $ V \subset \tilde{K} $, there is a  
which induces a   \emph{Gysin pushforward}   
\begin{equation}   \label{Gysinmap}   \tilde{\iota}_{\tilde{g}, \tilde{V}, \tilde{L} ,  * }  :   \mathrm{H}^{0}_{\et}(\mathcal{T}_{V} , \ZZ_{p} ) \to   \mathrm{H}^{2}_{\et}(\tilde{\mathcal{S}}_{\tilde{L}}, \ZZ_{p}(1))   
\end{equation}   
on  \emph{arithmetic}  $p$-adic  \'{e}tale  cohomology.   
Since each $ \tilde{\mathcal{S}}_{\tilde{L}} $ is an affine scheme over $ E $, the Hoschild-Serre spectral sequence\footnote{It is more appropriate to work with \emph{continuous} \'{e}tale cohomology \cite{Jannsen1988}, since taking inverse limit does not commute with spectral sequences in general.} induces a map 
\begin{equation}    \label{AbelJacobi}     \mathrm{AJ}_{\tilde{L}} :         \mathrm{H}^{2}_{\et}(\tilde{\mathcal{S}}_{\tilde{L}}, \ZZ_{p}(1))     \to  \mathrm{H}^{1}(E,  \mathrm{H}^{1}(\tilde{\mathcal{S}}_{\tilde{L}, \overline{\QQ}   } ,  \ZZ_{p}(1))   
\end{equation}   
referred to as the \emph{Abel-Jacobi} map.

Suppose $ \tilde{L}  $ is of the  form   $   KC $ from now on. Then we have an isomorphism $ \tilde{\mathcal{S}}_{\tilde{L}, \overline{\QQ}}  \simeq  \bigsqcup \nolimits _{\sigma}   \mathcal{S}_{K, \overline\QQ}  $
where $ \sigma $ runs over $ \Gal(E_{C}/E)$ and  we  have a  $  \Gal(\overline{\QQ}/E) $-equivariant  isomorphism 
$$  \mathrm{H}^{1}(\tilde{\mathcal{S}}_{\tilde{L}, \overline{\QQ}   } ,  \ZZ_{p}(1))   \simeq   \mathrm{H}^{1}\big (\tilde{\mathcal{S}}_{\tilde{L}, \overline{\QQ}   } ,  \ZZ_{p}(1)  \big  )  \otimes_{\ZZ_{p}} \ZZ_{p} [\Delta_{C}] $$ 
where $ \Delta_{C} = \Gal(E_{C}/C) $ and $ \ZZ_{p}[\Delta_{C}]$ denotes the group algebra of $ \Delta_{C} $. An application of Shapiro's  lemma  gives a canonical  isomorphism   
\begin{equation}   \label{Shapiro}   \varsigma_{\tilde{L}} :  \mathrm{H}^{1}_{\et}  \big  ( E,    \mathrm{H}^{1}(\tilde{\mathcal{S}}_{\tilde{L}, \overline{\QQ}   } ,  \ZZ_{p}(1)  \big )    \xrightarrow{\sim}     \mathrm{H}^{1}_{\et} ( E_{C}  ,       \mathrm{H}^{1}\big (\mathcal{S}_{K, \overline{\QQ}   } ,  \ZZ_{p}(1)  ) \big )
\end{equation}   
So the composition $  \varsigma_{\tilde{L}} \circ   \mathrm{AJ}_{\tilde{L}}  \circ \tilde{\iota}_{\tilde{g} ,  \tilde{V} ,  \tilde{L}, *} $ gives us a  map  $$  \mathrm{H}^{0}_{\et}(\mathcal{T}_{V},  \ZZ_{p}  )   \to \mathrm{H}^{1}_{\et}\big(E_{C},  \mathrm{H}^{1}_{\et}(\mathcal{S}_{K, \overline{\QQ}}, \ZZ_{p}(1) \big ) . $$
One may then pose the question of constructing a system of cocyle   classes   
\begin{equation}   \label{Galoiscohoetale}  c_{n}   \in      \mathrm{H}^{1}(E_{C_{n}},     \mathrm{H}^{1}_{\et}( \mathcal{S}_{K, \overline{\QQ}}, \ZZ_{p}(1)    )     \big )    
\end{equation}   for    an infinite lattice of compact open subgroups $  ( C_{n} )_{n}  $, which satisfy   
\begin{equation}   \ell^{-1} \mathfrak{H}_{\ell,  *} (\mathrm{Frob}_{\ell}^{-1}) ( c_{n} ) =   \mathrm{cores}^{C_{n\ell}}_{C_{n}}(c_{n\ell})    
\end{equation}   
where $ \mathrm{cores}^{C_{n\ell}}_{C_{n}} $ denotes the corestriction map from  the Galois cohomology at $ E_{C_{n\ell}}$ to the cohomology at  $ E_{C_{n}} $ and $ \mathfrak{H}_{\ell}(X) $ is as in  Definition \ref{HeckepolyGL2}.  This can be done by making suitable choices of $ \tilde{g} $ mirroring the choice of the  local element (\ref{zetamodularvector}).    We will explain how one can verify the \emph{existence} of these   local  choices by certain congruence conditions in \S \ref{examplessec}.    The  global  construction can then be carried out  as in  \cite[\S 3.4]{CZE} 

\begin{remark}   As noted in \S \ref{EichlerShimurasec}, there is an injection  $  \mathrm{T}_{p, K}  \simeq    \mathrm{H} ^{1}_{\et   } ( \overline{\mathcal{S}}   _{K , \overline{\QQ} } ,  \ZZ_{p}(1) )   \hookrightarrow    \mathrm{H}^{1}_{\et}(\mathcal{S}_{K, \overline{\QQ}} ,  \ZZ_{p}(1)    )  $. Thus the Galois representations that appear in the cohomology of the compactified curve all appear in the cohomology of the open curve. If  $$   \mathrm{H}^{1}_{\et}( \mathcal{S}_{K, \overline{\QQ}} , \ZZ_{p}(1)) \to   (\pi^{\vee})^{K} \otimes V^{\vee}   $$  is a projection to a Galois-automorphic piece (where $ V^{\vee} $ is a two-dimensional Galois representation over some finte extension of $ \QQ_{p}$) and $ \varphi \in \pi^{K} $ is a non-zero element,    then we can construct cocycles $ z_{n} \in \mathrm{H}^{1}(E_{C_{n}}, V^{\vee})$ by pairing the projection of $ c_{n} $   to   $ (\pi^{\vee})^{K}  \otimes V^{\vee}   $ with $ \varphi $.   The cocycles $ z_{n} $ lie in the Galois stable  $ \ZZ_{p}$-lattice of $ V^{\vee} $ given by the image of $ \mathrm{H}^{1}_{\et}(\mathcal{S}_{K, \bar{\QQ}},  \ZZ_{p}(1)) $ in $ V^{\vee} $.  Under certain technical hypothesis,  one can pull these classes back to the Galois cohomology of the lattice inside $ V^{\vee} $.    
%Let us also explain why it is $ \ell^{-1} \mathfrak{H}_{\ell, *}(\mathrm{Frob}_{\ell}^{-1}) $ that we would like as Euler factor in (
\end{remark} 
\begin{remark}  The author learned the idea of  introducing  a larger group $ \tilde{\Gb} $ in \cite{loe}.  It gives a more flexible control on the Galois variation of classes by intertwining Hecke and Galois actions on the target, and allows us to convert the problem of norm relations to one involving only Hecke operators.  This is essentially equivalent to introducing the Hecke and Galois action (\ref{HeckeGaloisaction}) in terms of Hecke operators of a single group, except we used the action of $ \Hb \times  \Gb  $. One may of course   replace $ \nu $ with maps to  other  tori.       For instance,  one may take $  \nu $ to be   the identity map $ \Hb \to \Hb  $ and try to construct a ``full" Euler system going up full tower of abelian extensions of $ E $. This   however   does  not  turn  out to  be  feasible.  % So the choice of the torus and the map $ \nu $
\end{remark}
\section{Integral test data}   \label{integraltestdatasec} 
In this section, we put the choice of the test data  (\ref{zetamodularvector}) on a more conceptual  footing. We will work abstractly in the setting of locally profinite groups   and formulate an abstract norm relation problem  in  the  spirit  of    Theorem  \ref{CMnormtheorem}.    Since the main goal is to illustrate how to prove norm relations rather than describe an actual construction of an Euler system, we will only make brief remarks on how the  abstract formalism applies  to  the  cohomology of Shimura varieties and hope the reader can make the connection  concrete  by referring to \S \ref{cohoform}.  The  notations  of this  section  are independent of  ones  introduced  in  the previous  ones.       %  However, the reader is invited  to refer to  \S \ref{cohoform} to see how the formalism  may be applied.    
\subsection{Abstract pushforwards}  \label{abspushsec}   Let $ G $ be a  unimodular  locally profinite group and $ M $  be a $ \ZZ_{p}$-module that is a smooth left representation of $ G $, i.e., any element in $ M $ is fixed by a compact open subgroup of $ G $. For brevity, we will refer to compact open subgroups of $ G $ as levels.   For each level $ L $ of $ G $, we let $ M(L) = M^{L} $ denote the $L$-invariants of $ M $. If $ L \hookrightarrow K $ is an inclusion  of   levels,    we have two maps   
\[
\begin{aligned}
\pr_{L,K}^{*} : M(K)    &   \longrightarrow M(L) &  &   &  
\pr_{L,K,*}  : M(L)   &     \longrightarrow M(K) \\ 
x &\longmapsto x,  &  &  \quad   &  x &\longmapsto \sum   \nolimits    _{\gamma \in K / L} \gamma x .
\end{aligned}
\]
that we refer to as  \emph{restriction} and \emph{induction}   respectively.  Moreover, for any $ g \in G $, we have \emph{conjugations}   
\[
\begin{aligned}
[g]^{*}_{K} : M(g^{-1}Kg)    &   \longrightarrow M(K) &  &   &  
[g]_{K,*}  : M(K)    &    \longrightarrow M(g^{-1}Kg) \\ 
x &\longmapsto g\cdot x,  &  &  \quad    &  x &   \longmapsto       g^{-1} \cdot x .
\end{aligned}
\]
These maps then model the behaviour of the cohomology of a Shimura variety over varying levels, and the representation $ M $ can be thought of as the direct limit of the cohomology over all levels.  For any two levels $ K, K' $ and $ g \in G $, we have a covariant Hecke correspondence $$ [K  g  K ' ]_{*} : M(K) \to M(K') $$ 
defined as the composition $$ M(K) \xrightarrow{\pr^{*}} M ( K \cap g K ' g^{-1}) \xrightarrow{\pr_{*}}  M( gK' g^{-1})   \xrightarrow{[g]_{*}}     M ( K' )   . $$   The  \emph{degree}  of this operator    is defined to be  $$   \deg  [K  g K ' ] _  {  *   }   =      |  K    ' \backslash  K '   g        K   |    $$ and we extend this notion   linearly   to linear  combinations of Hecke correspondences $ M(K') \to M(K)$.   We  define $ [K'gK]^{*} : M(K) \to M(K') $ as $ [K g^{-1} K']_{*} : M(K) \to M(K') $.   
\begin{remark}  Notice that when working with $ \ZZ_{p}$-coefficients, the cohomology of a Shimura variety at a finite level cannot be recovered by taking invariants of the direct limit over all levels.  This failure of ``Galois descent" introduces additional technical difficulties  that we will ignore for the purposes of  our  discussion.  For a detailed treatment of this issue, we refer the reader to \cite[\S   2]{CZE}.     
\end{remark} 

Suppose now   that   $ H $ is  another  unimodular profinite group and $$ \iota : H \to G $$ is a  closed embedding, via which we view $ H $ as a subgroup of $ G $.  Let     $ N $  be   a smooth representation of $ H $.  Suppose   that for each level $ L $ of $ G $ and $ V $ a level of $H $ contained in $ L $, we have  a morphism $$ \iota_{V , L , * } : N(V)  \to  M(L) $$ 
that satisfies   the obvious compatibility  conditions with  respect to the the restrictions, inductions and conjugations by elements of  $ H $ on the two sides  of  this    map.     We refer to the collection of the maps   $ \iota_{V, L, *} $ as a \emph{pushforward} and denote this collection   informally      by $ \iota_{*} : N \to M $. We moreover require that this pushforward satisfies Mackey's double coset axiom.  That is, for any levels $ K , L   \subset G $ and $ U \subset H $ satisfying $ U, L \subset K $,  we have a commutative diagram   
\begin{center}
    \begin{tikzcd}  [column sep = large]  \bigoplus_ { \gamma }  N ( U _ {\gamma}   )   \arrow[r,  "{ \sum [\gamma]_{*}}"]   &  
  M ( L  )  \\ 
 N   ( U )  \arrow[r, "{\iota_{*}}"]   \arrow[u, "{\oplus\,\pr^{*}}"] &  M  (K)   \arrow[u,"{\pr^{*}}", swap]  
    \end{tikzcd}
\end{center}
where $ \gamma \in U \backslash K  /  L $ is a fixed  set of representatives, $ U_{\gamma} = U \cap \gamma L \gamma^{-1} $ and  $ [\gamma]_{*} :  N   (  U  _ { \gamma }  )  \to  M  ( L )  $  denotes  the    composition   $$    N   ( U_{\gamma })  \xrightarrow {  \iota_{* } }  M   (  \gamma L \gamma ^{-1} )   \xrightarrow { [ \gamma ] _ { * }  }  M    (   L   ) .   $$       This  condition is independent of the choice of representatives $ \gamma $.  
The pushforward $ \iota_{*} $ then models the pushforwards in the cohomology of Shimura varieties obtained by an embedding of Shimura  data.    When pushing cycle classes, we can take $ N $ to be the $ \ZZ_{p}$-span of fundamental cycles of Shimura varieties in $ \mathrm{H}^{0}_{\et} $, i.e.,  the trivial  representation.   
\subsection{Completed pushforwards}   \label{comppushsec}    
We would like to encode the data of a pushforward $ \iota_{*} $ into a single representation. One way of achieving this is by working modulo $ \ZZ_{p}$-torsion. For any $ \QQ_{p}$-algebra $ R$,  let us denote $$ N_{R} = N \otimes_{\ZZ_{p}}
R , \quad \quad    M_{R} = M \otimes_{\ZZ_{p}} R  . $$
Abusing notation, we denote the induced map $ N_{R}(U) \to M_{R}(L) $ on invariants by $ \iota_{V, L, *} $ as well.   
Fix $ \QQ$-valued Haar measures $ \mu_{H}, \mu_{G} $ on $ H $, $ G $ respectively. Let $ \mathcal{H}_{R}(G) $ denote the  full Hecke algebra of $ G $ with coefficients in $ R $, which  is  the set of all $ R$-valued functions $ \xi : G \to \QQ_{p} $ that are  locally constant and compactly supported. It equals the union of $ \mathcal{H}_{R}(K \backslash G / K )$ over all levels $ K $, and the union is endowed with a convolution operation   that   equals $ \mu(K) $ times the convolution operation defined on $ \mathcal{H}_{R}(K \backslash G / K  ) $ in \S \ref{Heckecorrsec}. The  representation $ M_{R} $ then becomes a  left-module over $ \mathcal{H}_{R}(G) $, where  the action  satisfies   \begin{align*}   \ch(K ' gK) \cdot x   &  =  \mu(K)   \cdot     [K'gK]^{*}(x)    \\
&   =  \mu(K)   \cdot     \sum   \nolimits   _{ \gamma \in K' g K / K }   \gamma    \cdot  x     
\end{align*}    
for all  $ \ch(K   ' gK) \in  \mathcal{H}_{R}(G) $ and $ x \in M_{R}(K)$.      In what follows, we   will view $ N_{R} \otimes_{R} \mathcal{H}_{R}(G) $ and $ M_{R}$ as representations of $ H \times G $ in the following way:  
\begin{itemize}
    \item $ (h, g ) \in H \times G $ acts on $ x \otimes \xi \in _{R}       N_{R}   \otimes     \mathcal{H}_{R}(G) $ via $ x \otimes  \xi   \mapsto   hx \otimes \xi( h^{-1} (-) g) $,  
    \item $ (h, g) \in H \times G $ acts on $ y \in M_{R}$ via $ y \mapsto g \cdot y $.    
\end{itemize}
Recall that the smooth dual of a representation of a locally profinite group is the set of all dual vectors that are invariant under some compact open subgroup.      Let $ M_{R}^{\vee} $ denote the smooth dual of $ M_{R} $ and $$ \langle -  ,   -   \rangle  : M_{R}  ^ {  \vee }  \times   M_{R}    \to  R   $$ denote the induced pairing.   We  will   consider $ N_{R} \otimes   M_{R}^{\vee}          $ as a smooth representation of $ H $ where $ h \in H $ acts on $ N_{R}   \times M_{R}^{\vee} $ via $ x \otimes f \mapsto  h x \otimes h f $.    
\begin{proposition}  There is a unique intertwining map $  \hat{\iota}_{*}   :  N_{R} \otimes _ { R   }   \mathcal{H}_{R   }(G)  \to   M_{  R    }        $
of $ H \times G $-representations 
such that for any level $ L \subset G $, any level $ V \subset H $  that is contained in $ L $ and any element $ x \in N_{R}(V) $, we have $  \hat{\iota}_{*}(x \otimes \ch(L) ) =   \mu_{H}(V)  \cdot \iota_{V, L , *}(x)   $ 
in $ M_{\QQ_{p}}$.    
\end{proposition}   
\begin{proof} See \cite[Proposition  2.13]{Anticyclo}.
\end{proof}    
The following result is  version of Frobenius  reciprocity  for  smooth  representations.    
\begin{proposition}   \label{Frobreci}     For any intertwining  map  $  \mathfrak{Z}   :   N_{R} \otimes \mathcal{H}_{R}(G)  \to  M_{R}$ of $ H \times G  $-representations,  there is a unique  intertwining   map   $  \mathfrak{z} :   N_{R} \otimes M_{R}^{\vee} \to  R     $ of  $ H $-representations         %smooth  $ H $-representations 
such that $$ \langle   \varphi   ,   \mathfrak{Z} ( x \otimes \xi )  \rangle  =   \mathfrak{z}    (  x \otimes ( \xi \cdot   \varphi   ) ) $$    
for all $ x \in   R  $, $   \varphi   \in  M_{R}^{\vee}   $ and $ \xi \in   \mathcal{H}_{R}(G)   $. The mapping $ \Psi \mapsto  \psi $  thus   defined    induces a bijection   between     $  \mathrm{Hom}_{H \times G }   \big   ( N_{R} \otimes \mathcal{H}_{R}(G),  M_{R}   \big    ) $ and $ \mathrm{Hom}_{H} ( N_{R} \otimes   M_{R}^{\vee}, R    ) $. 
\end{proposition}   
\begin{proof}     See  \cite[Lemma  2.13]{Anticyclo}.     
\end{proof}   
\subsection{Integral test data} Throughout this subsection, we fix levels $ K , L \subset G $ and $ U \subset H $ such that $ U \subset K $ and $ L \triangleleft K $.     
Fix also an element $ x_{U} \in N(U) $ and let $ y_{K}  \in M(K) $ denote the pushforward  $ \iota_{U, K, *}(x_{U})  $.  Suppose we are given    a    $ \ZZ_{p}$-linear combination  $ \mathfrak{H} \in \mathcal{H}_{\ZZ_{p}}(K \backslash G / K ) $ of Hecke operators.    We would like to study conditions such that there exists a  class $ y_{L} \in M(L) $ such that   
\begin{equation}   \label{strongnormrel}       \mathfrak{H}_{*}( y_{K} ) = \pr_{L, K, *} ( y_{L} ) 
\end{equation}    
For applications to Euler systems, it suffices to establish such an equality modulo the $ \ZZ_{p}$-torsion in $ M(K) $. So we instead content ourselves with  describing conditions such that $    \mathfrak{H}_{*} (y_{K}) - \pr_{L, K, *} (y_{L}) \in M(K)_{\mathrm{tors}}  $.   
Equivalently, we wish to construct a class $ y_{L} \in M(L) $ such that   
\begin{equation}   \label{weaknormrel}   \mathfrak{H}_{*}(y_{K, \QQ_{p}}  )  = \pr_{L, K, *}(y_{L, \QQ_{p} }   )   
\end{equation}    
where $ y_{K, \QQ_{p}} \in M_{\QQ_{p}}(K) $, $ y_{L, \QQ_{p}}  \in   M_{\QQ_{p}}(L) $  are the images of $ y_{K} $, $ y_{L} $   respectively.    Inspired by  the construction in  \S \ref{Eulersystemsec}, we would like the class $ y_{L} $ to be given by a finite sum of  maps    
\begin{equation}   \label{mixedHecke}      [Vg L]_{*}   :      N(V) \xrightarrow{\iota_{*}} M(gLg^{-1})  \xrightarrow{[g]_{*}} M(L).   
\end{equation}  
applied to elements $ x_{V} \in N(V) $ for some levels $ V   \subset  g L g^{-1} $.   % We refer to  (\ref{mixedHecke})    as a \emph{mixed Hecke  correspondence}.   
That is, we would like our class $ y_{L} $ to satisfy   
\begin{equation}    \label{testpushclass} y_{L} = \sum   \nolimits   _{ \alpha }    [V_{\alpha}  g_{\alpha} L ] _{*}(x_{V_{\alpha}})   
\end{equation}    
for  a  finite  collection of twisting  elements   $ g_{\alpha}   \in G $ (which are not required to be  distinct for distinct $ \alpha $), compact open subgroups $ V_{\alpha} \subset g_{\alpha } L g_{\alpha}^{-1} $  and candidate classes $ x_{V_{\alpha} } \in   N(V_{\alpha})$ where  $ \alpha $  runs  over   some  finite   indexing set $ A $.      
\begin{lemma}   \label{norminlimitlemma}     The equality (\ref{weaknormrel}) holds with $ y_{L}$ as in (\ref{testpushclass})  if and only if  \begin{equation}    \label{norminlimit}       \hat{\iota}    _{*}( x_{U, \QQ_{p}} \otimes \mathfrak{H}  ) =   \sum \nolimits    _{\alpha \in A}   [U : V_{\alpha}]  \cdot    \hat{\iota}_{*}\big (x_{V_{\alpha}, \QQ_{p}} \otimes \ch( g_{\alpha} K) \big )    
 \end{equation}
 where $ [U : V_{\alpha}]$ denotes $ \mu_{H}(U)/\mu_{H}(V_{\alpha}) $.   
\end{lemma} 
\begin{proof} See  \cite[Note 3.1.2]{CZE}.
\end{proof}
Let $ \mathcal{C}(G/ K ,  N_{\QQ_{p}}) $ denote the $ \QQ_{p}$-vector space of all functions $ \xi :   G/ K \to N_{\QQ_{p}}$ that have finite support. The  input of $ \hat{\iota}_{*}$ on the  right hand side of (\ref{norminlimit}) can be viewed as the element of $ \mathcal{C}(G/ K ,  N_{\QQ_{p}})$  that sends $  g_{\alpha}K $ to a normalized linear combination of elements of $ N_{\QQ_{p}}$.  We  call  $ \xi $ a  \emph{test data} for our norm relation problem  (\ref{weaknormrel}). The form of   the   input    on   the    right hand side of  (\ref{norminlimit}) imposes an \emph{integrality} constraint on  our  test  data.  The definition below captures this condition.     
\begin{definition}   \label{integraldatadef}         An element   $  \xi  \in    \mathcal{C}_{\QQ_{p}}(G/ K ,  N_{\QQ_{p}}) $ is said to be  \emph{$ \ZZ_{p}$-integral  at level $L$} if for each $ g \in G $,  there exists a  finite collection $ \left \{ V_{i} \, | \,  V_{i} \subset g L g^{-1} \right \}$ of levels of $H $ and    classes    $ x_{V_{i}} \in N(V_{i}) $  for  each  $   i   $     such that $$ \xi( gK ) =  \sum   \nolimits    _{i \in I }    [U : V_{i}   ]  \cdot x_{V_{i}, \QQ_{p}}$$ 
where $ [U : V_{i}] = \mu_{H}(U)/ \mu_{H}(V_{i}) $.    
\end{definition}
This definition guides the choices of test data in $ \mathcal{C}(G/K,  N_{\QQ_{p}})$ that we hope to feed in the limit map $ \hat{\iota}_{*} $ in order to solve  (\ref{weaknormrel}) by an element of the form (\ref{testpushclass}).   % It is an integrality   
It however says nothing about the equality (\ref{norminlimit})  itself.   To remedy 
this,  note that $ x_{U, \QQ_{p}} \otimes \mathfrak{H} $ is also an element of $ \mathcal{C}(G/K,  N_{\QQ_{p}})$. Since $ \hat{\iota}_{*}$ is $ H $-equivariant  and the taget   of $ \hat{\iota}_{*}$ has  trivial $H$-action, one way of proving (\ref{weaknormrel}) is to require that the inputs of $ \hat{\iota}_{*}$ in (\ref{norminlimit}) have equal $ H $-coinvariants, where the $ H $ action on $ \mathcal{C}(G/K , N_{\QQ_{p}}) $ is as in   \S     \ref{comppushsec}.  This motivates the  following.    

\begin{definition} A \emph{zeta element} for $ (x_{U},  \mathfrak{H}, L) $ is an element of $ \mathcal{C}_{\QQ_{p}} (G/ K   ,    N_{\QQ_{p}}   )  $ that is $ \ZZ_{p}$-integral  at  level $ L $  and  lies in the $ H $-coinvariant class of $ x_{U,   \QQ_{p}    } \otimes \mathfrak{H}  $.    
\end{definition}

So constructing a zeta element amounts to proving (\ref{weaknormrel}).  Note that the existence of such an element  solves the norm relation problem (\ref{weaknormrel}) with respect to  \emph{any} representation $M$ and \emph{any} pushforward from  $N $ to  $M$,  since  our definition  is  independent of these  two   objects.    A key result of \cite[\S   3]{CZE} is a necessary and sufficient criteria for the existence  of such an element in terms of certain operators on $ H $ derived  directly from $ \mathfrak{H}$.       For  $ g \in G $,  define    the \emph{$ g $-twisted $H$-restriction} of $ \mathfrak{H}$ to be the function   
\begin{equation}   \label{twistedrestriction}       \mathfrak{h}_{g} : H \to \ZZ_{p},  \quad \quad h \mapsto     \mathfrak{H}  (   hg    )    
\end{equation}   
Let $ A   $  denote  the  finite set  $   H \backslash  H \cdot \mathrm{Supp} (\mathfrak{H})/K $.     For each $ \alpha \in A $, pick any representative $ g_{\alpha} \in G $ for $ \alpha $, and denote (abusing notation) $ H_{\alpha} = H \cap g_{\alpha}  K g_{\alpha}^{-1} $, $ V_{\alpha} =  H_{\alpha} \cap g_{\alpha} L g_{\alpha}^{-1} $ and $ \mathfrak{h}_{\alpha} $ the $ g_{\alpha}$-twisted $ H$-restriction of $ \mathfrak{H} $.  Note that each $ \mathfrak{h}_{\alpha} $ is an element of $ \mathcal{C}_{\ZZ_{p}}(  U \backslash  G /  H_{\alpha}) $ and can be viewed as a  covariant  correspondence   $$  \mathfrak{h}_{\alpha, *} :  N(U) \to N(H_{\alpha})  .   $$    In particular,   we    can define its degree.        
\begin{theorem}    \label{zetateo}  Suppose $ N $ is the trivial representation.  Then  a zeta element for $ (x_{U},  \mathfrak{H}, L) $ exists if and only if $ \deg(\mathfrak{h}_{\alpha, *}) \in [H_{\alpha} : V_{\alpha} ]  \cdot \ZZ_{p}$ for all $ \alpha \in  A $.     
\end{theorem}
It is straightforward to see that this criteria is  independent of the choice of representatives $ g_{\alpha} $. In fact, it   also implies the stronger relations (\ref{strongnormrel}), i.e., the desired norm relations hold without modding out by  $ \ZZ_{p}$-torsion.  A more general version that applies to arbitrary  representations $N $ can be found in \cite[\S3]{CZE}.    We will give two examples of  this  criteria      in \S  \ref{examplessec}.     %  The case of trivial representation is applicable to pushforwards of cycle classes of the source Shimura variety.     
\begin{remark} This method outlined has been further strengthened in \cite{compactinduction} as follows. Given $ x_{U} $ and $ L $, one can ask for the set of all $ \mathfrak{H} $ such that the criteria of Theorem \ref{zetateo}  is satisfied. Under the assumption that $G$ is the product of a group $G_{0}$ with a torus (and that both $K$, $L$ also have this form),  this set can be shown to be an ideal of $ \mathcal{H}_{\ZZ_{p}}(K \backslash G / K ) $, and one can study this ideal  via  its   Satake transform.  This is advantageous, since Hecke polynomials are defined  as inverse Satake transforms  of ``Satake-polynomials".   So  one   can establish the  norm relation problem (\ref{strongnormrel}) by showing that the Satake polynomial lies in this ideal.   % by finding  a set of generators.    % To  establish this  inclusion,  one can   simpler operators that generate this ideal, which is useful when the Hecke polynomial required to produce a specific  $L$-factor  is  complicated  to  write  down.           
\end{remark}

\subsection{The method   of   local    zeta integrals}   \label{LSZsec}      The method of \cite{LSZ} intends to prove the a weaker version of (\ref{weaknormrel}).   More   precisely,     it  aims    to prove analogues of  (\ref{correctESHeeg})     directly at the level of Galois cohomology.     Let us explain this strategy  in our abstract  formalism. For a concrete application of this strategy, we refer the reader to \cite[\S 8]{Anticyclo}.

Suppose, as is the case with cycles, that $ N $ is the trivial representation $ \ZZ_{p}$  and $ x_{U} = 1 \in \ZZ_{p} $. Let $ R $ denote a $ \QQ_{p}$-algebra.  For this subsection only,  we  will   assume that the representation $ M_{R}^{\vee}    $ is \emph{unramified}, i.e.,  the  $ K $ invariants $ M_{R}^{\vee}(K) $ form a one-dimensional module over $ R $.\footnote{The one-dimensionality of $ M_{R}^{\vee}(K) $ is a harmless assumption, since one eventually projects   the Galois cohomology classes  (\ref{Galoiscohoetale})    to automorphic pieces of geometric \'{e}tale cohomology, and one may project them to a one-dimensional Galois cohomology piece on which the Galois group acts by a  finite   order  character. In our abstract scenario, we are assuming that $M _{R}^{\vee}(K) $ is equal to this Galois-automorphic piece.  See \cite[\S 8]{Anticyclo}  for a specific  scenario. }  Fix    a non-zero   element   $ \phi^{\circ}    \in  M_{R}^{\vee}(K) $.     We wish to construct a class $ y_{L} \in M(L) $ such that (\ref{weaknormrel})  holds   after pairing with $  \phi^{\circ}   $. That is, we wish to verify that   
\begin{equation}   \label{pairednormrelation}    \langle  \phi^{\circ}   , \mathfrak{H}_{*}(y_{K, R})  \rangle  \rangle  =  \langle   \phi^{\circ}  ,   \pr_{L, K , * }  (y_{L, R})  \rangle    
\end{equation}    
As before, we would like the class $ y_{L} $ to be of the   form   (\ref{testpushclass}). For each such $ y_{L} $, there is a corresponding   integral  test data  $ \xi \in \mathcal{C}_{\QQ_{p}}(G/K, \QQ_{p}) $.   Lemma \ref{norminlimitlemma}  states  that the  equality     (\ref{pairednormrelation}) is equivalent to 
\begin{equation}   \label{pairednormrelationlimit}    \langle    \phi^{\circ}   , \hat{\iota}_{*} ( \mathfrak{H}  ) \rangle  =   \langle   \phi^{\circ}     ,  \hat{\iota}_{*}(\xi)  \rangle  .      
\end{equation}     
By Proposition \ref{Frobreci}, we see that  (\ref{pairednormrelationlimit})    is equivalent to   \begin{equation}  \label{LSZnormrelation}   \mathfrak{i} ( \mathfrak{H}   \cdot  \phi^{\circ}    ) =   \mathfrak{i}    (\xi \cdot   \phi^{\circ}   )    
\end{equation}    
where $ \mathfrak{i}  \in  \mathrm{Hom}_{H}( M_{R}^{\vee}, R) $ is the unique  element corresponding to the completed map $ \hat{\iota}_{*} : \mathcal{H}_{\QQ_{p}}(G)  \to  M_{\QQ_{p}}$.    Since $ M_{\QQ_{p}}^{\vee}(K) $ is one-dimensional, $ \mathfrak{H}  \cdot   \phi^{\circ}    = L(\mathfrak{H}) \cdot   \phi^{\circ}    $  where $ L(\mathfrak{H}) \in   R^{\times} $ is a scalar.\footnote{This constant will be the inverse of an   appropriate    $L$-factor in applications.}  One now imposes the following crucial assumption.     % and $ \xi $ is the integral test data corresponding to $ y_{L} $ in (\ref{testpushclass}).
%\begin{remark} The one-dimensionality of $ M_{\QQ_{p}}^{\vee}(K) $ is a harmless assumption, since one eventually projects   the Galois cohomology classes \ref{Galoiscohoetale} to automorphic pieces of geometric \'{e}tale cohomology twisted by a Dirichlet character, and we are assuming that $M _{\QQ_{p}}(K) $ is equal to this Galois-automorphic piece.  See \cite[\S 8]{Anticyclo}  for a specific  scenario.   
%\end{remark}    
%We  must also  introduce the following  crucial assumption holds.
\begin{assumption}   \label{keyassumption}    The space $ \Hom_{H}(M_{\QQ_{p}}^{\vee}, \QQ_{p}) $ is one-dimensional.  
\end{assumption}
Then   $ \mathrm{Hom}_{H}(M_{R}^{\vee}, R) $ is one-dimensional for any $ R $. The advantage of this assumption is that the relation (\ref{LSZnormrelation}) may be verified with respect to \emph{any} non-zero element of $ \mathrm{Hom}_{H}(M_{R}^{\vee} , R) $ and  any $ \QQ_{p}$-algebra $ R $. In particular, we may use $ R = \bar{\QQ}_{p} \simeq  \CC $.     The  strategy is then to construct a specific basis $ \mathfrak{z}$ of $ \mathrm{Hom}_{H}( M_{\CC}^{\vee}, \CC) $ using   holomorphy  factors  arising from    \emph{local zeta integrals} and verify that   
\begin{equation}   \label{LSZnormrelagain}  L(\mathfrak{H})    \cdot    \mathfrak{z}(    \phi^{\circ}    )     =  \mathfrak{z}( \xi \cdot   \phi^{\circ}    )   
\end{equation}    
for some choice of integral test data $ \xi $.   Note that this condition is  independent of the choice of $   \phi^{\circ}    \in  M_{\CC}^{\vee}(K) $.  For applications to Euler systems, we would need  the  test  data   $ \xi $ to be  independent of $ M_{R}^{\vee} $ (or at least  independent of  $ M_{R}^{\vee}$ twisted by finite order characters of $G$).   We   will give one  example   of this method  in  \S  \ref{examplesecLSZ}.    
\begin{remark}      The method is also applicable to more general representations   $N$ satisfying the obvious analog of Assumption \ref{keyassumption}, but   is    slightly more involved to state.  The  main  difficulty of this method lies in  identifying the data $ \xi $ and  in controlling denominators of the coefficients of $ \xi $ that are required  by   the    integrality condition  in Definition \ref{integraldatadef}.   
\end{remark}   
\section{Examples}
\label{examplessec}  In this section, we illustrate the two methods for proving horizontal norm
relations described in \S\ref{integraltestdatasec} in the setting of modular
curves. Throughout, we use only the notation from
\S\ref{integraltestdatasec} and  introduce   any further notation as needed.
\subsection{Example 1.}   \label{examplezetasec}      In this subsection, we study the local norm relation problem that arises through the setup of   \S \ref{cohoform} at a prime $ \ell   \neq  p    $ that is split in the imaginary quadratic extension.  Throughout this subsection, let   $$ H = \QQ_{\ell}^{\times}   \times   \QQ_{\ell}^{\times}   , \quad  \quad     G =  \GL_{2}(\QQ_{\ell}) \times \QQ_{\ell}^{\times} . $$ 
and   define the embedding \begin{align*}   \iota : H  & \longrightarrow    G   \\  
 (h_{1}, h_{2})   %  \mapsto k^{-1}      &     
 & \longmapsto    ( \mathrm{diag}(h_{1}, h_{2})   \cdot      , h_{1}/h_{2} )    
\end{align*}   
via  which  we  consider  $  H  $  as  a   subgroup  of   $   G   $.       
We let  $$   U =  \ZZ_{\ell}^{\times} \times \ZZ_{\ell}^{\times},  \quad   K  = \GL_{2}(\ZZ_{\ell}) \times \ZZ_{\ell}^{\times} ,   \quad       L = \GL_{2}(\ZZ_{\ell}) \times (1 + \ell \ZZ_{\ell})   $$      
and $$ \mathfrak{H} =   \ell  \,  \ch(K)  -  \ch(K \sigma  ^  {  - 1  }    K)  +  \ch(K \tau ^ { - 1  }  K ) $$   
where $ \sigma = (\mathrm{diag}(\ell, 1), \ell)  $, $ \tau = (\mathrm{diag}(\ell, \ell), \ell^{2}   )   $.   Note  that both $ \sigma $ and $ \tau  $   lie  in  $   H    $.    
\begin{remark}   \label{notdiagembedding}   As noted in \S \ref{Frobmatrixsec}, the local embedding  arising from the  Shimura  data is not diagonal.  Suppose $ H' =  k H   k ^{-1} \subset G $ is the conjugate of $ H $ some $ k \in K $.  Let $ \mathfrak{h}_{g} $ (resp., $\mathfrak{h}_{g}'$) denote is the $g$-twisted $H$-restriction (resp., $ H'$-restriction) of $\mathfrak{H}$. Then $$ \mathfrak{h}'_{kg}(khk^{-1}) = \mathfrak{H}(khg) =   \mathfrak{H}(hg)  =     \mathfrak{h}_{g}(h) $$
for all $ h \in H $, $ g \in G $. So if $ \mathfrak{h}_{g} = \sum_{i} c_{i} \,  \ch(U h_{i} H_{g}) $, then $ \mathfrak{h}_{kg}' = \sum_{i} c_{i} \,  \ch(kUh_{i}H_{g}k^{-1})$ and it   easily follows that $ \deg(\mathfrak{h}_{g,*}') = \deg(\mathfrak{h}_{g,*}) $.  It  therefore  suffices to work with  the  diagonal  embedding  for  the  purposes  of  verifying   Theorem  \ref{zetateo}.         
%As $g$ runs over    representatives  of $ H\backslash H \cdot \mathrm{Supp}(\mathfrak{H})/K $, $ kg $ runs over representatives of $ H' \backslash H' \cdot   \mathrm{Supp}(H)/K $.    Thus it  suffices  to work with the diagonal embedding.     
\end{remark} 

\begin{remark} We are using $ \ell $ in the second component of $ \sigma $, since the action of $ \mathfrak{H}$ on cohomology is covariant and the (right) action of $ \ell^{-1} $ in the covariant convention corresponds to the (left) action of   geometric   Frobenius at the place corresponding to the first component of $H  $. Note   also     that in anticyclotomic extensions, the geometric Frobenius at one of  the places above a split prime $ \ell $ equals the arithmetic Frobenius at the other  place,   so this choice does not seem   too important in the proposed framework of \cite{JNS}. In fact, our criteria also gives an affirmative answer when $ \mathfrak{H}$ is replaced by  
\begin{equation}  \label{checkH} \ell  \,   \ch(K)   -  \ch ( K  \check {\sigma}^{-1} K)    +  \ch ( K   \check{\tau}  ^{-1} K ) 
\end{equation}   
where  $ \check{\sigma}  =   (\mathrm{diag}(\ell,1), \ell^{-1}) $, $    \check{\tau}    =   ( \mathrm{diag}(\ell ,  \ell ) ,  \ell^{-2}) $.          
\end{remark}  
If  $ \xi $ denotes the function $ \ch(U \eta \gamma K )  : G \to \ZZ $ for some $ \eta \in H $, $  \gamma  \in G $, then the twisted restriction $ \xi_{g} :    H \to \ZZ $, $ h \mapsto \xi(hg) $ is zero unless $ H g K   =    H \gamma K  $, and $ \xi_{\gamma} = \ch ( U \eta H_{\gamma} ) $.  So to compute the twisted restrictions of $ \mathfrak{H }$,   we first write $ \mathfrak{H} $ as an element of $ \mathcal{C}_{\ZZ_{p}}(U \backslash G / K ) $.   Let  us  denote    $$ \sigma_{1} =   \left (     \left  (   \begin{smallmatrix}  1  /  \ell  & \\   &   1 \end{smallmatrix}  \right )    ,   1/\ell  \right ) ,   \quad      \sigma_{2} =   \left   (   \left  (   \begin{smallmatrix} 1/\ell \\ 1/ \ell  &   1     \end{smallmatrix}    \right  )   ,    1/\ell  \right  )       \quad   \sigma_{3} =   \left (  \left (  \begin{smallmatrix} 1 \\ &  1 /  \ell       \end{smallmatrix}   \right  )    ,   1  /   \ell   \right   )  ,   \quad      \sigma_{4} =     \left (    \left   (   \begin{smallmatrix}  1/\ell \\  &   1/\ell       \end{smallmatrix}   \right   )   ,   1 /  \ell^{2}    \right   )     .$$ 
\begin{lemma}   \label{Uorbitsplit}     A set of representatives for $ U \backslash  K \sigma^{-1} K  / K $ is $ \left \{  \sigma_{1}, \sigma_{2}, \sigma_{3}  \right \} $.    
\end{lemma}   
\begin{proof} This is easily established  by studying the $ U$-orbits on the coset  space $ K \sigma^{-1} K/K $, which was   % which was 
described in  Lemma  \ref{Telldecompose}. It is easy to see that $ U \sigma_{i}K $ are pairwise disjoint for $ i =1 , 2,  3 $.    
\end{proof}    
\begin{corollary}  A set of representatives for $ H \backslash H \cdot  \mathrm{Supp}(\mathfrak{H})/ K  $    is $  \left \{ 1_{G},   \left    (    \left  (  \begin{smallmatrix} 1 \\ 1/\ell  &  1 \end{smallmatrix}   \right )      , 1   \right     ) ,  (1,  1/ \ell^{2}) \right \}   $.   
\end{corollary}   
\begin{proof} Since $ H \sigma_{1} K = H K $, $ H \sigma_{2} K =     \left    (    \left  (  \begin{smallmatrix} 1 \\ 1/\ell  &  1 \end{smallmatrix}   \right )      , 1   \right     ) $, and $ H\sigma_{3} K = H \sigma_{4} K  =  H (1, 1/\ell^{2})    K $,        %  and  $ H \sigma  _ { i } K $,   
Lemma  \ref{Uorbitsplit} implies that the  representatives are contained in the claimed set.  Now $H\gamma_{1} K = H \gamma_{2} K $ for $ \gamma_{1}, \gamma_{2} \in G $ if and only if there is an $ h \in H $ such that $ \gamma_{1}^{-1} h \gamma_{2} \in K$. Using this, one easily sees that the elements represent distinct cosets in $ H \backslash G /  K  $.    
\end{proof}
Using Lemma  \ref{Uorbitsplit}, we see that    
\begin{equation}   \label{explodedhecke}    \mathfrak{H} =   \ell \,  \ch(UK) -    \Big (  \ch(U \sigma_{1} K ) +  \ch(U \sigma_{2} K )  +  \ch( U \sigma_{3} K )   \Big     )    +    \ch  (  U \sigma_{4} K )    
\end{equation} 
Set  $$ g_{0} = 1_{G}, \quad g_{1} =    \left    (    \left  (  \begin{smallmatrix} 1 \\ 1/\ell  &  1 \end{smallmatrix}   \right )      , 1   \right     )  ,   \quad   g_{2}  =       (1,  1/ \ell^{2}    )    . $$ 
and let  $ H_{g_{i}} = H \cap g_{i} K g_{i}^{-1}  $ and $ V_{g_{i}} = H \cap g_{i} L g_{i}^{-1} $.   Then $ H_{g_{0}} = H_{g_{2}} = U $. From the expression  (\ref{explodedhecke}), we see that 
\begin{align*}   \mathfrak{h}_{g_{0}}    &  =    \ell  \,   \ch(U)   -       \ch(U ( \ell^{-1},   1) U )    \\
\mathfrak{h}_{g_{1}}  &  =    \ch(U \,  (\ell^{-1},  1) \,  H_{g_{1}})   \\   
\mathfrak{h}_{g_{2}}  &  =     \ch(U   (  \ell^{-1} ,   1)   U )  -            \ch(U (\ell^{-1}, \ell^{-1}   )  U ) .   
\end{align*}
Since $ H $ is abelian and $ H_{g_{1}} \subset U $,   it is easy  to  see  that   $$   \deg(\mathfrak{h}_{g_{0}, * }) = (\ell -1 ) \quad  \quad   \deg(\mathfrak{h}_{g_{1}, * }) =   1    ,   \quad  \quad   \deg(\mathfrak{h}_{g_{2}, *}) =  0  .   $$ 
Since $ [H_{g_{i}} : V_{g_{i}}] $ divides $ \ell - 1 $, the condition on the degree of $ \mathfrak{h}_{g_{0}, *}$ and $ \mathfrak{h}_{g_{2}, *} $  required in  Theorem  \ref{zetateo} are immediate.  As for  $ \mathfrak{h}_{g_{1}} $,   we simply verify that $ H_{g_{1}} =  V_{g_{1}} $, i.e., if $ (h_{1}, h_{2}) \in H_{g_{1}}  \subset U $, then $ h_{1} \equiv h_{2} $ modulo $ \ell -  1   $.

\begin{remark} A zeta element in this scenario is $   (\ell -1  ) \left  ( \ch(K) - \ch( g_{1} K)   \right   )    $, which is  essentially  the element $\zeta_{\ell}$  (\ref{zetamodularvector})  at  a split prime scaled by $ (\ell - 1) $.   If in the discussion above, we   replace the term $  - \ch(K  \sigma ^{-1}   K ) $  in  $ \mathfrak{H}$  with, say, $   - \ell^{2}   \,     \ch( K  \sigma^{-1}  K    ) $,   a zeta element still exists but it now spanned by $ \ch(g_{i}K) $     for $i = 0 ,1, 2  $  with  non-zero  coefficients  for  each  $ i   $.      The vanishing of the third ``twist" $ \ch(g_{2}K) $ is a consequence of the vanishing of $   \deg( \mathfrak{h}_{g_{2}, *}   )   $. 
\end{remark}  
\begin{remark}   \label{alternativezeta}   We invite the reader to verify  that the criteria of Theorem  \ref{zetateo} also holds if  $ \mathfrak{H}$  is taken to be   as  in     (\ref{checkH}). In this case, $$ (\ell-1)  \left (\ch(K) - \ch({}^{t}\negsmall g_{1}  K)  \right  )   $$ 
is a zeta element.  
\end{remark} 

\subsection{Example 1 bis}   \label{examplesecLSZ} In this subsection, we illustrate the method of \cite{LSZ} in the same setting as the previous subsection. We continue to use the notation $H$, $G$, $U$, $K$, and $L$ for the groups introduced above, and we view $H$ as a subgroup of $G$ via the embedding $\iota$.    
We  will  also consider the groups $$  G_{0} = \GL_{2}(\QQ_{\ell}) ,  \quad  K_{0} = \GL_{2}(\ZZ_{\ell}) $$  and  the  embedding $$  \iota_{0}  : H \to G_{0}  ,  \quad  \quad   (h_{1}, h_{2}) \to \mathrm{diag}(h_{1}, h_{2}) .  $$ We will write $ H_{0}$ for the image of $ \iota_{0}$. If $ \chi  :  \QQ_{\ell}^{\times}  \to  \CC  ^ { \times}    $ is a  character, we will write   \[
\begin{aligned}
\chi_{G} : G    &  \to   \CC^{\times}  & & \quad   \quad    &  
\chi_{H_{0}}  :  H_{0}  &   \to   \CC^{\times} \\ 
(g,  a ) &  \mapsto  \chi(a)  ,  &  &  \quad  \quad       &    \iota_{0}    (h_{1},  h_{2})   &    \mapsto  \chi( h_{1}^{-1}  h_{2} ) 
\end{aligned}
\] Abusing  notation, we  denote the the space underlying the one-dimensional representations given by $ \chi_{G} $, $ \chi_{H_{0}} $ by   the   same   symbols.  We fix a Haar measure on $ G_{0} $ that gives $ K_{0}$ measure one and  a   %measure a    
multiplicative measure on $ \QQ_{\ell}^{\times }$ that gives $ \ZZ_{\ell}^{\times} $ measure one.  We  assume that the Haar measure $ \mu_{G} $ in \S  \ref{comppushsec} is chosen so that $ \mu_{G} $ equals to the product of these two measures.  In  particular,  $ \mu_{G}(K)  =  1  $.                  

Recall that we require $ M_{\CC}^{\vee}(K) $ to be a $ 1$-dimensional $ \CC $-vector space. Fix an  irreducible admissible  unramified principal series  representation $ \Pi    $ of $  G_{0}  =     \GL_{2}(\QQ_{\ell})$.  We will assume that $  M^{\vee}_{\CC} $  belongs to the family of representations  $$   \mathrm{Tw}(\Pi )    =  \left \{  \Pi    \otimes \chi_{G} \, | \, \chi : \QQ_{\ell}^{\times} \to \CC^{\times} \text{ is a finite order unramified character}  \right \}   $$
and our goal is to construct  an integral test data $ \xi $ that satisfies   (\ref{LSZnormrelagain}) for \emph{every}    $ M_{\CC}^{\vee} \in  \mathrm{Tw}(\Pi)$   with  respect  to   $$ \mathfrak{H} = \ell \,  \ch(K) -   \ch(K  \check{\sigma}  ^ { - 1 }  K )  +  \ch(  K  \check{\tau}   ^ { - 1  }     K   ) $$
where $ \check{\sigma}  =   (\mathrm{diag}(\ell,1), \ell^{-1}) $,    $    \check{\tau}    =   ( \mathrm{diag}(\ell ,  \ell ) ,  \ell^{-2}) $  are as  in  (\ref{checkH}).   %\footnote{We already know one such choice that works for every $ M_{\CC}^{\vee}$ by our discussion in \S  \ref{examplezetasec}.} %and a  finite  order  unramified  character $ \chi  :     \QQ_{\ell}^{\times} \to \CC  $  (which we view as a character of $ G $ via the projection $ \GL_{2}(\QQ_{\ell}) \times  \QQ_{\ell}^{\times}  \to  \QQ_{\ell}^{\times} $ onto the second component)   such that $$ \pi = \sigma \otimes \chi  . $$
For any  $  \Pi \otimes \chi_{G}   \in  \mathrm{Tw}(\Pi)$ and $  \phi^{\circ}  \in  (\Pi \otimes  \chi_{G})^{K}   $,   $$ \mathfrak{H}   \cdot  \phi^{\circ}     = \ell \cdot  L(\tfrac{1}{2}   ,    \Pi^{\vee}   \otimes  \chi_{G} )   ^ {  -   1    }      \cdot   \phi^{\circ}      $$
where $   L(s,  \Pi^{\vee} \otimes\chi_{G})   $  denotes the standard   $L$-factor  of  $ \Pi^{\vee} \otimes \chi_{G}  $    in the complex variable $ s $,  i.e.,   if $ \alpha $, $ \beta $ denote the  Satake  parameters for $ \Pi $, then  $  L(s,  \Pi^{\vee} \otimes  \chi_{G}  ) ^{-1}   =  ( 1 -  \alpha  ^ { - 1 }   \chi_{G}(\ell)   \ell^{-s} ) ( 1 -   \beta ^ { - 1  } \chi_{G}(\ell)     \ell^{-s})  $.   Thus    $$ L(\mathfrak{H}) = \ell \cdot L(\tfrac{1}{2}, \Pi^{\vee} \otimes \chi_{G})^{-1} $$ is the constant needed in  our  norm  relation (\ref{LSZnormrelagain}).      
%where $ \alpha $, $ \beta $  denote  the    Satake parameters of $ \pi^{\vee} $.   

Recall   also  that the pushforward $ \hat{\iota}_{*} $  gives as element of $ \mathrm{Hom}_{H}(M_{\CC}^{\vee},  \CC)$.  % The restriction of $ \chi $ to $  \iota(H) $ allows us to define a character $ \chi_{0} : H_{0} \to \CC^{\times} $, which sends $  \mathrm{diag}(h_{1},  h_{2})  \in \iota_{0}(H) $ to $  \chi(h_{1} / h_{2}) $. Then   
If $  M_{\CC}^{\vee} =  \Pi \otimes \chi_{G} \in   \mathrm{Tw}(\Pi)$ for some $ \chi $,   then    $$ \mathrm{Hom}_{H}(M_{\CC}^{\vee}, \CC) =   \mathrm{Hom}_{H}(\Pi \otimes \chi_{G} , \CC)   \simeq    \mathrm{Hom}_{H_{0}}(\Pi,  \chi_{H_{0}}) $$  
By  \cite[Theorem B]{ChenSun} and the discussion on ``good"  characters following it,  we  know  that    \begin{equation}   \label{Chensunbound}     \dim_{\CC}  \mathrm{Hom}_{H_{0}}(\Pi,  \chi_{H_{0}}     ) \leq  1 \end{equation}   
If this  dimension is  zero,    the relation (\ref{pairednormrelationlimit}) hold trivially for the corresponding choice of $ M_{\CC}^{\vee} $.    Thus the case of interest is when $  \mathrm{Hom}_{H_{0}}(\Pi , \chi_{H_{0}}) $ is exactly  one-dimensional.   %for some choice of $ \chi $.    
The non-vanishing of  % Note that the non-vanishing of this space for any singe $ \chi $ forces $ \Pi $ to have trivial central character, and %The non-vanishing $ \mathrm{Hom}_{H_{0}}(\Pi, \CC) $ is closely r % If this is the case  Note that this forces $ \Pi $ to have trivial central character. % For trivial $ \chi $,  non-vanishing of
these spaces  
is   closely related to the existence of a certain \emph{model} for $ \Pi $, as we now explain.       
%\begin{remark}   The one-dimensionality $ \mathrm{Hom}_{H}(\pi, \CC)$ is also closely related  to the non-vanishing of the bottom class of our Euler system in Galois cohomology and is a natural condition to impose.  
%\end{remark}   

%Before we proceed, let us  explain a closely related notion of a \emph{Shalika model}.   
We write $v_\ell      $ for the normalized
$\ell$-adic valuation  on    $   \QQ_{\ell}   $       satisfying $v_\ell(\ell)=1$, and $|\cdot|$ for the
associated absolute value on $\mathbb{Q}_\ell^\times$, given by
$|x|=\ell^{-v_\ell(x)}$.   Let   $$  \psi_{\mathrm{std}} : \QQ_{\ell} \to \CC^{\times}   $$    denote the standard character that is trivial on $ \ZZ_{\ell}$ and satisfies $ \psi_{\mathrm{std}}(1/\ell^{n}) =  \mathrm{exp}(2 \pi i/\ell^{n}) $ for  $   n \geq  1   $. Then any other character $ \psi : \QQ_{\ell} \to \CC^{\times} $ is of the form $ x \mapsto \psi_{\mathrm{std}}(ax )$ for some $ a \in \QQ_{\ell} $ and   has  conductor $ \ell^{-\delta} $ where $ \delta = v_{\ell}(a) $. We fix a non-trivial   additive character $ \psi $, and also fix  a    multiplicative character $ \eta : \QQ_{\ell}^{\times}  \to  \CC  $.           % $ e^{2\pi i/p^{n}} $ on $ 1/p^{n}$ for $ n \geq 1 $.    
Let $ S_{0}  \subset G_{0} $ denote the subgroup of all elements of the form    $  \left (   \begin{smallmatrix}  x  &  x a  \\  &  x  \end{smallmatrix}   \right  ) $ where $ x \in \QQ_{\ell}^{\times} $, $ a  \in  \mathbb{Q}_{\ell} $. We can  define a character of $ S_{0}  $ via  
\begin{align*}    \Theta   : S_{0}   &  \to  \CC^{\times}   \\  \left ( 
\begin{smallmatrix}  x & x a  \\   &  x  \end{smallmatrix}   \right )     &   \mapsto      \eta(x)    \psi(a)   .   
\end{align*}  
Let $ \mathrm{Ind}_{S_{0}}^{G_{0}}(\Theta) $ denote the representation of $ G $ induced from $   \Theta    $. That is, elements of $ \mathrm{Ind}_{S_{0}}^{G_{0}}(\Theta) $ are locally constant functions $    W :  G_{0}  \to   \CC $ such that    $$  W (\gamma g) = \Theta    (\gamma)  \cdot     W    (g)    $$      for all $ \gamma \in S_{0} $, $ g \in G_{0} $ and the action of $ G_{0}    $ on   $   W    $  is via right translation on the domain of $   W    $.   
An $(\eta, \psi)$-\emph{Shalika model}   for $  \Pi    $  is  a non-zero (but not necessarily smooth)  linear  map  $$   \mathcal{S}   :  \Pi   \to  \mathrm{Ind}_{S_{0}}^{G_{0}} (\Theta)  . $$ 
Since $   \Pi    $ is  irreducible,  such a map is necessarily  an  embedding.  For any $ W  \in  \mathrm{Ind}_{S_{0}}^{G_{0}}(\Theta)   $   and  quasi-character $ \chi : \QQ_{\ell}^{\times}   \to  \CC^{\times} $,     we can  define  a  \emph{local    zeta  integral}           $$   \zeta    (s ;   W,  \chi    ) =  \int_{\QQ^{\times}   _  {  \ell   }   }    W   \left  [  \left  (   \begin{matrix} x \\ & 1   \end{matrix}   \right   )   \right  ]   \chi( x ) \,  |x|^{s - \frac{1}{2} }    \, d^{\times}x   $$
where $ d^{\times} x $ denotes the    multiplicative   measure on $  \QQ_{\ell}^{\times}  $ that gives $ \ZZ_{\ell}^{\times} $ measure one.         
The   following   result  is  taken   from \cite[\S 3.2]{Dimitrov}.          
\begin{theorem}   \label{FJShalikateo}      Suppose   that   $  \Pi   $ admits an $ (\eta, \psi) $-Shalika model $ \mathcal{S} $ as above.  Then  for  each $ W \in   \mathcal{S} (\Pi) $,  the integral $ \zeta(s; W, \chi) $  converges  absolutely for $ \mathrm{Re}(s) $  large  enough and there exists a holomorphic   function $ P (s; W, \chi  )   $  such   that   $$      \zeta(s; W, \chi)  =     P (s; W,  \chi  )   \cdot     L(s,   \Pi    \otimes \chi_{G} )   $$
Moreover,  there exists a spherical  vector $  W^{\circ}  \in   \mathcal{S}  (\Pi^{K_{0}})   $ satisfying $ W^{\circ}(1_{G_{0}}) = 1 $ such that $ P(s; W^{\circ}, \chi)   =     ( \ell^{s   -   \frac{1}{2}} \chi(\ell)) ^{\delta} $ for all $ s \in  \mathbb{C} $.    
\end{theorem}
%\begin{remark}
\begin{remark} 
When $ \eta $ is the trivial character and $ \Pi  $ is unitary, we recover the setup studied in \cite{FriedbergJacquet} for the group    $ \GL_{2} $.    
\end{remark}   
Let    us    now assume that   $ \mathrm{Hom}_{H}(M_{\CC}^{\vee}, \CC) $ does not vanish for some $ M_{\CC}^{\vee}   \in   \mathrm{Tw}(\Pi)$.   Then $ \Pi $ is forced to have  trivial  central  character and the notion of a $(1, \psi)$-Shalika model for $ \Pi $  coincides  with the notion of a $\psi$-Whittaker model. By   \cite[Theorem 4.4.3]{Bump}, any irreducible   admissible  infinite dimensional  representation  of  $ G_{0}  $     admits a $ \psi$-Whittaker model.  In particular,  $ \Pi $ admits a $ (1, \psi_{\mathrm{std}})$-Shalika model. In what follows, we   fix such a model $ \mathcal{S} $, so that the character $ \Theta $ is  defined using $ \psi_{\mathrm{std}} $ and trivial $ \eta  $.            % denot   
Let $ \chi $ be  an  arbitrary  finite order  unramified character  that  we  fix  for  the  rest of the  discussion.     Consider the map
\begin{align}   \label{zmap}       \mathfrak{z}_{\chi}    :  \Pi    \to \CC    ,  \quad    \phi  \mapsto  P(\tfrac{1}{2},   \mathcal{S}(\phi), \chi )    
\end{align} 
where $ P(s, W, \chi) $ denotes the holomorphy factor in Theorem \ref{FJShalikateo}. For each fixed $ a , b \in \CC $ and $ W_{1}, W_{2} \in  \mathrm{Ind}_{S_{0}}^{G_{0}}(\Theta)$,   it is easy to see that    $$ \zeta(s;    aW_{1} + bW_{2}, \chi)  = a \zeta(s; W_{1}, \chi ) + b   \zeta(s; W_{2} ,  \chi )   $$   
for all $ s \in \mathbb{C} $ with $ \mathrm{Re}(s) $ large enough. Thus the same property  holds for $ P(s; -, \chi) $. Since $ P(s; W, \chi) $ is holomorphic for  each $ W \in \mathcal{S}(\Pi)$, we see that the linearity of $ P(s;   -   ,     \chi) : \mathcal{S}(\Pi) \to \CC $   holds for all $ s \in  \CC $.     In particular,  $ \mathfrak{z}_{\chi} $ is     $ \CC$-linear.     
\begin{proposition} $ \mathfrak{z}_{\chi}  $ is a basis for $ \Hom_{H_{0}}(\Pi, \chi_{H_{0}}) $. In particular, $ \Hom_{H}(M_{\CC}^{\vee}, \CC) $ is one-dimensional for  all  $ M_{\CC}^{\vee} \in   \mathrm{Tw}(\Pi)   $.    
\end{proposition}    
\begin{proof}  For any $  h =  \iota _ {  0  }    (h_{1}, h_{2}) \in H_{0}$, $ W \in  \mathcal{S}(\Pi) $ and $ s $ sufficiently large enough,     
\begin{align*}   \zeta ( s ; h \cdot W , \chi )    &  =   \int_{\QQ^{\times}   _  {  \ell   }   }    W   \left  [  \left  (   \begin{matrix} h_{1} x \\ &  h_{2}   \end{matrix}   \right   )   \right  ]   \chi( x ) \,  |x|^{s - \frac{1}{2} }    \, d^{\times}x  \\
&  =       \int_{\QQ^{\times}   _  {  \ell   }   }    W   \left  [  \left  (   \begin{matrix} h_{1}  h_{2}^{-1}  x \\ &   1    \end{matrix}   \right   )   \right  ]   \chi( x ) \,  |x|^{s - \frac{1}{2} }    \, d^{\times}x    \\
& =         \int_{\QQ^{\times}   _  {  \ell   }   }    W   \left  [  \left  (   \begin{matrix} y  \\ &   1    \end{matrix}   \right   )   \right  ]   \chi( h_{1}^{-1}h_{2} y ) \,  |h_{1} ^{-1} h_{2} y |^{s - \frac{1}{2} }    \, d^{\times}y   \\
%& =  \chi(h_{1}^{-1}h_{2})  | h_{1}^{-1} h_{2}|^{s - \frac{1}{2}}  \cdot   \zeta(s; W, \chi ) \\
& =  \chi_{H_{0}}(h) |h_{1}^{-1}h_{2} | ^{s - \frac{1}{2}}   \cdot \zeta(s; W,  \chi ) . 
\end{align*}   
where in the third equality, we used   the  change of  variables $ y =  h_{1} h_{2}^{-1}  x   $.    
Dividing both sides by $ L(s, \Pi \otimes \chi) $, we see that   
\begin{align}    \label{holofactorequal}  P(s   ; h \cdot  W ,  \chi )  =       \chi_{H_{0}}(h) |h_{1}^{-1}h_{2} | ^{s - \frac{1}{2}}   \cdot    P(s; W, \chi) .    
\end{align}   
Since $ P $ is holomorphic,  we can plug $ s = \frac{1}{2}  $ in   (\ref{holofactorequal}) to obtain $$   P(\tfrac{1}{2} , h \cdot W , \chi )  = \chi_{H_{0}}(h)  \cdot   P(\tfrac{1}{2}; W, \chi )  .  $$
As  $ \delta = 0 $ for $ \psi_{\mathrm{std}}$,  the  second claim of  Theorem   \ref{FJShalikateo} implies that $ P(\frac{1}{2}, W^{\circ} , \chi)  =  1 $ for some $ W^{\circ} \in \mathcal{S}(\Pi^{K_{0}})$.  Consequently, $  \mathfrak{z}_{\chi}    $ is non-zero.   The claim  now follows by the  bound  $    \mathrm{dim}_{\CC} \,     \mathrm{Hom}_{H_{0}}( \Pi,  \chi_{H_{0}})  \leq 1  $ discussed  above.    
\end{proof} 
Let $ W^{\circ} \in  \mathcal{S}(\Pi) $ be the vector  given by Theorem  \ref{FJShalikateo}. Consider the function $$ f_{W^{\circ}} :  \QQ_{\ell}^{\times}  \to   \CC   , \quad \quad x  \mapsto  W   ^{\circ}   \left [ \left (\begin{smallmatrix} x \\ & 1 \end{smallmatrix}   \right )   \right ] . $$
\begin{lemma}   \label{fwlemma}       $ f_{W^{\circ}} $ is supported on $ \ZZ_{\ell} \setminus \left \{ 0  \right \} $ and equals identity on $ \ZZ_{\ell}^{\times} $.     
%Note that $ f_{W^{\circ}} $ is non-zero, since $ 
\end{lemma} 
\begin{proof}   For any  $ a \in \ZZ_{\ell} $,  the  element    $   \gamma_{a} := \left  (  \begin{smallmatrix}  1 & a \\ &  1   \end{smallmatrix}   \right )   \in K_{0}$  fixes $ W^{\circ} $. Therefore   
\begin{equation}   \label{fwequality}    f_{W^{\circ} }(x) =      W  ^ { \circ}    \left [ \left (\begin{smallmatrix} x \\ & 1 \end{smallmatrix}   \right )  \gamma_{a}  \right ]   =        W  ^   { \circ}   \left [ \left (\begin{smallmatrix} x  &  xa \\ &   1  \end{smallmatrix}   \right )  \right ] =  \psi_{\mathrm{std}}(ax)   \cdot    f_{W^{\circ} }(x).   
\end{equation}    
%Since $ W^{\circ} $  is  determined by its values on $ S_{0} \backslash G_{0} / K_{0} $, its support $ \mathrm{Supp}(W^{\circ}) $ is a union of double  cosets  in   $ S_{0}  \backslash  G_{0}    /  K_{0}   $.      Since any element in $ G_{0}/K_{0} $ has a representative in the standard upper triangular Borel subgroup (Iwasawa decomposition), it is easy to see that any  double  coset  in    $ S_{0} \backslash G_{0}  /  K $ can be represented by an element of the form $  \mathrm{diag}(x_{0}, 1) $  where $ x_{0} \in \QQ_{\ell}^{\times} $. So $ f_{W^{\circ}} $ is non-zero. 
If $ x_{0} \in \QQ_{\ell} \setminus \ZZ_{\ell} $ lies in the support of $ f_{W^{\circ}} $, then taking $ a = \ell^{-1} x_{0}^{-1} \in \ZZ_{\ell}  $  in (\ref{fwequality}) implies that  $$ f_{W^{\circ} }(x_{0}) = \psi_{\mathrm{std}}(1/\ell) \cdot f_{W^{\circ}}(x_{0}) .   $$    
Since $   \psi_{\mathrm{std}}(1/\ell) \neq   1    $, we see that $ f_{W^{\circ}}(x_{0}) = 0 $. Thus $ f_{W^{\circ}} $ is supported on $ \ZZ_{\ell} \setminus \left \{  0  \right  \}   $.     Since $ W^{\circ}(1_{G_{0}}) = 1 $ and $ W^{\circ}$ is $  K _ {   0  }     $-invariant, the second  claim   is  obvious.         
\end{proof}    
%\begin{lemma} If $ W^{\circ} \in \mathcal{S}(\Pi^{K_{0}}) $ is contained in $ \ZZ_{\ell} \setminus \left \{ 0  \right \} $. 
%\end{lemma}   
%\begin{proof}    
%\end{proof}    
Since $ \Pi $ has  trivial   central  character, it is induced by two characters that are inverses of each other  and  it   easily   follows  that   $ \Pi \simeq \Pi^{\vee} $.   Consequently,   
\begin{equation}   \label{selfdualLH}   L (  \mathfrak{  H } )   =    \ell \cdot   L( s,  \Pi^{\vee}  \otimes  \chi_{G} )  =   \ell  \cdot    L( s  ,   \Pi \otimes \chi_{G}   ) .
\end{equation}    
Define the  integral test data 
\begin{align}   \label{guesstestdata}     \xi =   (\ell-1) \left  (  \ch(K) - \ch( \check{g} K)  \right  )   \in  \mathcal{C}(G/K,  \QQ_{p} )   
\end{align}
where $   \check{g}   =  \left    (    \left  (  \begin{smallmatrix} 1  &  1/ \ell    \\   &  1 \end{smallmatrix}   \right )      , 1   \right     )  $. We will   write   $ \check{g}_{0} $ for the  first   component of $ \check{g} $ and let  $ \xi_{0} : G_{0}/K_{0} \to \CC $   denote  the map obtained by  restricting  $ \xi $ to $ G_{0}/K_{0}$, where $ G_{0} \hookrightarrow G $ is given by $ g \mapsto (g, 1)  $.   Then $ \xi_{0} $ is an element of the Hecke algebra $ \mathcal{H}_{\CC}(G_{0}) $ and therefore acts on $ \mathcal{S}(\Pi)$.  We wish to compute  $$ P(s, \xi_{0} \cdot W^{\circ} , \chi)  .  $$             % Let $  \varphi^{\circ}  \in  \Pi^{K} $ be the vector corresponding to $ W^{\circ}  $  specified by  Theorem  \ref{FJShalikateo}.  
To this end, note that Theorem \ref{FJShalikateo} and  Lemma \ref{fwlemma} imply  that        $$   L(s, \Pi \otimes \chi_{G})  = \zeta(s; W^{\circ} ,  \chi  )  =     \int_{ \ZZ_{\ell} \setminus   \left \{  0   \right \}   }   f_{W^{\circ}}(x)   \chi(x)  |x|^{s - \frac{1}{2}}     d ^{\times} x       %  =  L(s, \Pi \otimes \chi_{G})    
$$ 
for all $ s $ where the  zeta  integral is  absolutely    convergent.  Moreover, 
\begin{align*}  \zeta ( s ,  \ch(   \check{g}_{0}   K_{0})  \cdot   W^{\circ}    , \chi )   &   =   \zeta ( s ,  \check{g}_{0} \cdot  W^{\circ}   ,  \chi  )  \\ 
&  =    \int_{\QQ_{\ell}^{\times}}  W^{\circ}       \left  [  \left  (   \begin{matrix} x   & x /  \ell  \\ &   1   \end{matrix}   \right   )   \right  ]      \chi( x ) \,  |x|^{s - \frac{1}{2} }     d^{\times}x     \\
&   =      \int_{\QQ_{\ell}^{\times}} f_{W^{\circ}}(x) \,  %W^{\circ}       \left  [  \left  (   \begin{matrix} x   &    \\ &   1   \end{matrix}   \right   )   \right  ]
\psi_{\mathrm{std}}(x/\ell)  \,    \chi( x ) \,  |x|^{s - \frac{1}{2} }     d^{\times}x    %  \\
%\end{align*} 
%& 
%& = \int_{\ZZ_{\ell}\setminus \left \{0\right \} }  f_{W^{\circ}}(x) \psi_{\mathrm{std}}( x/\ell  )  \,    \chi( \ell y) \,  | x |^{s - \frac{1}{2} }     d^{\times}   
\end{align*}
for $ \mathrm{Re}(s) $ large enough. 
%&  =    \chi(\ell) \ell^{s  - \frac{1}{2}}  \int_{\QQ_{\ell}^{\times}}   f_{W^{\circ}}(\ell y )  \,  % W ^{\circ} \left  [  \left  (   \begin{matrix} \ell y   &    \\ &   1   \end{matrix}   \right   )   \right  ]
%\psi_{\mathrm{std}}( y )  \,    \chi( y) \,  | y|^{s - \frac{1}{2} }     d^{\times}   y   \\
%&  =
%
%Since %$ \psi_{\mathrm{std}} $ is trivial  on $ \ZZ_{\ell}$ and 
%$ f_{W^{\circ}} $ is supported on $ \ZZ_{\ell} \setminus \left \{ 0 \right \} $, the integral above is supported on  $ \ZZ_{\ell}^{\times}$ and $  f_{W^{\circ}}(\ZZ_{\ell}^{\times}) =  \chi(\ZZ_{\ell}^{\times}) = 1 $.  Therefore %  Since $ W^{\circ}(1_{S}) = 1 $ and $ W^{\circ}$  is invariant under $ K^{\circ} $, $ f_{W}^{\circ}(\ZZ_{\ell}^{\times}) = 1 $.   Therefore}
%\begin{align*}  
%&= %\zeta ( s ,  \ch(   \check{g}_{0}   K^{\circ})  \cdot   W^{\circ}    , \chi )     &   =  
Again by Lemma \ref{fwlemma},  the  integral above is supported on $ \ZZ_{\ell} \setminus \left \{ 0  \right \}  $.  We break the integral into the sum $ I_{1} $ and $ I_{2} $ where $ I_{1} $ is the integral over $ \ZZ_{\ell}^{\times} $ and $ I_{2}$ is over $ \ell \ZZ_{\ell}  \setminus  \left \{ 0 \right \}  $.  % Then   %    
Let $ \omega_{\ell}   \in \CC^{\times} $  denote a primitive $ \ell$-th root  of  unity.     Then 
\begin{align*}      
I_{1}  %  &    
=   \int_{\ZZ_{\ell}^{\times}  }  \psi_{\mathrm{std}}(x/\ell)   \,      d^{\times}x     % \\
& = \sum   \nolimits    _{ a \in (\ZZ / \ell \ZZ )^{\times}}  \psi_{\mathrm{std}}(a/\ell) \cdot \mathrm{vol}(1 + \ell\ZZ_{\ell}) \\
%&  =  \sum   \nolimits   _{a \in (\ZZ/\ell\ZZ)^{\times}   } \psi_{\mathrm{std}}(a/\ell) (\ell-1)^{-1} \\
& =  \sum\nolimits   _{i=1}^{\ell-1} \omega_{\ell}^{i} (\ell-1)^{-1}  
=  -  (\ell-1)^{-1}   . 
\end{align*}  
%where $ \omega_{\ell} $ denotes a primitive $ \ell$-root of unity.
On the other  hand,   
\begin{align*}   I_{2}   &   =   \int_{ \ell \ZZ_{\ell} \setminus \left \{ 0  \right \}    } f_{W^{\circ}}(x ) \chi(x) |x|^{s-\frac{1}{2}} d^{\times   }    x   \\
% &  =  \int_{ \ZZ_{\ell} \setminus   \left \{  0   \right \}   }   f_{W^{\circ}}(\ell y)   \chi(\ell y)  |\ell y|^{s - \frac{1}{2}}     d ^{\times} y     \\
 &  = \zeta ( s, W^{\circ} , \chi) - 
  \int_{ \ZZ_{\ell}^{\times} }   f_{W^{\circ}}(x)   \chi(   x)  | x|^{s - \frac{1}{2}}     d ^{\times} x   =  % \chi(\ell) \ell^{s - \frac{1}{2}} \cdot   
  L(s, \Pi   \otimes     \chi_{G})  -   1 
\end{align*} 
%by changing the variable $ x = \ell y $.  
Therefore,   
\begin{align*}   \zeta  (  s  ,   \xi   _ { 0 }    \cdot  W^{\circ}  , \chi  )   &  =  (\ell-1)  \cdot \left (   L( s,  \Pi   \otimes  \chi_{G} )  - I_{1}  - I_{2}   \right   )    %  \\
%  &  
=  \ell    .     
\end{align*}   
Dividing both sides  by $ L(s, \Pi \otimes \chi_{G})$ and remembering that $  P(s, W^{\circ} ,  \chi )  \equiv 1 $,  we  see that \begin{equation*}   \label{LSZrelationinPs}   P (s, \xi  _ { 0 } \cdot W^{\circ} , \chi ) =  \ell   \cdot       L(s, \Pi  \otimes  \chi_{G})^{-1}    \cdot   P(s, W^{\circ} ,  \chi )   
\end{equation*}    
for $ \mathrm{Re}(s) $ large enough.  Since $ P(s, W ,\chi) $ is holomorphic  for any $ W \in  \mathcal{S}(\Pi) $, the  equality  above holds for all $  s \in  \mathbb{C} $.   Plugging $ s = \frac{1}{2} $,   we   find   that           
\begin{equation}   \label{LSZrelationP1/2}   P (\tfrac{1}{2} ,  \xi  _ { 0 } \cdot W^{\circ} , \chi ) =  \ell      \cdot     L(\tfrac{1}{2} , \Pi  \otimes  \chi_{G})^{-1}   \cdot      P(\tfrac{1}{2} , W^{\circ} ,  \chi )     
\end{equation}    
%  Dividing both sides by $ L(s, \Pi \otimes \chi_{G}) $, we   get   that   
%$$ P(s,  \xi  _ {  0   }   \cdot  W^{\circ}  ,  \chi   ) =  (\ell-1)   +  L(s, \Pi \otimes \chi_{G})^{-1}  =    $$ 
%View $ \mathfrak{z}_{\chi} $ as an  element of $ \Hom_{H}(\Pi \otimes \chi_{G} , \CC) $. Let $    \varphi  ^{\circ} \in \Pi^{K_{0}} $ denote the element corresponding to $ W^{\circ} $, and let $  \phi ^{\circ} = \varphi ^{\circ} \otimes 1 $ denote the element in $   ( \Pi \otimes \chi_{G} )  ^ { K   }  $.  
%Thus $$ \zeta (s, \xi \cdot W^{\circ} , \chi ) = (\ell-1) L(s, \Pi  \otimes  \chi_{G})   $$ 
Let  $ \varphi^{\circ} \in \Pi^{K_{0}}$ denote the element such that $ \mathcal{S}(\varphi^{\circ}) = W^{\circ} $.  If  we view $ \mathfrak{z}_{\chi} $ as an element of $  \mathrm{Hom}_{H}(\Pi \otimes \chi_{G} , \CC) $ and let  $ \phi^{\circ} \in \Pi  \otimes  \chi_{G} $ denote the vector $  \varphi^{\circ} \otimes 1 $,    % where $ \varphi^{\circ} \in \Pi^{K_{0}}$ corresponds to $ W^{0}$,   
then   we can rewrite  (\ref{LSZrelationP1/2}) as 
\begin{align*}   \mathfrak{z}_{\chi}( \xi \cdot  \phi^{\circ}   )  %  & =  P(\tfrac{1}{2} , \mathcal{S}( \xi \cdot \phi^{\circ})  , \chi)  \\  
%& =     \ell \,  L(\tfrac{1}{2} , \Pi \otimes  \chi_{G} ) ^{-1}  \cdot   P( \tfrac{1}{2} , \mathcal{S}(  \phi^{\circ}) ,  \chi)  \\
%& =  \\ 
&  =  L(\mathfrak{H}) \cdot  \mathfrak{z}_{\chi}(\phi^{\circ}) . 
\end{align*} 
Thus the relation  (\ref{LSZnormrelagain}) is verified in our  setting.   

\begin{remark}
As noted in \S\ref{aimssection}, the failure of the multiplicity-one hypothesis (Assumption~\ref{keyassumption}) constitutes a major limitation of this method. We also observe that the central ingredient in this approach is the choice of the integral test data~\eqref{guesstestdata}. Unlike Theorem~\ref{zetateo}, however, this method provides no insight into how one might \emph{a priori} identify this data, and instead requires proceeding by pure   guesswork.  See also  Remark  \ref{hardtowrite}.      
% The failure of multiplicity one requirement (Assumption \ref{keyassumption}) is a major limit of this method.  % This guesswork can get considerably tricky for higher dimensional groups. 
\end{remark} 
\begin{remark}   \label{volumeremark} An additional difficulty for this method (not encountered in the example at hand)  arises from the fact that verifying the integrality condition of the test data requires computing volumes of   non-parahoric subgroups of the source groups. For the situation considered in \cite{Anticyclo}, this turns out to be manageable, essentially because the Hecke polynomial computing the standard $L$-function is “deceptively simple” to describe.\footnote{See the introduction to \cite{GrossSatake}.} For the $L$-factors arising in the settings studied in \cite{Siegel1} and \cite{EulerGU22}, the author is not aware of any method for computing all the required twisted volumes.\footnote{See however  \cite{CornutnormI}, \cite{CornutnormII}  where a method for computing such volumes  is   described  for a class of    orthogonal  groups.}  On the other hand, the twisted restrictions~\eqref{twistedrestriction} are much more amenable to computation, owing to a “geometric” recipe for decomposing parahoric double cosets originally  discovered by  \cite{Lagsansky} for Chevalley  groups   and generalized in \cite[\S  5]{CZE}. 
\end{remark}

\subsection{Example 2.}   
\label{ESoverE}

In this section, we establish the local  norm relations in the setting   of   \S      \ref{cohoform}  at an  inert prime $ \ell  \neq p  $   that specialize to the Euler factor $  P_{\lambda}(\mathrm{Frob}_{\lambda}^{-1}) $  given in     (\ref{PlambdaX})  in Galois cohomology when the level of the modular curve is $ \widehat{\Gamma}_{0}(N)$.    %\footnote{when the level of the curve is $ \widehat{\Gamma}_{0}(N)$ for some $ N $ not divisible by $ \ell$.}     
The notation of this subsection is independent of \S \ref{examplezetasec}   and   \S  \ref{examplesecLSZ}.

Let $ E $ denote the  unique unramified extension of $ \QQ_{\ell}$ of degree $ 2 $. We choose a $\ZZ_{\ell}$-basis $ (1, \delta) $ for the ring of integers $ \mathcal{O}_{E} $  where   $ \delta $ is  a  trace  zero  element in $  \mathcal{O}_{E}$. This determines an embedding $$ \iota_{0} :  E^{\times} \hookrightarrow  \GL_{2}(\QQ_{\ell})   ,   \quad   \quad   a  +  b  \delta  \mapsto  \left (   \begin{smallmatrix}  a   &  b \delta^{2}  \\    b  &  a   \end{smallmatrix}   \right )     $$ Let $ T \subset E^{\times} $ denote the  group of elements $ h $ such that $ h \bar{h}  = 1 $, where $ \bar{h} $ denotes the conjugate  of  $ h $ under the non-trivial  element of $ \Gal(E/\QQ_{\ell})$.  Set   $$ H = E^{\times}, \quad    G = \GL_{2}(\QQ_{\ell}) \times T  $$ 
and   let   
\begin{align*}   
\iota :   H    & \to  G    , \quad \quad  ( \iota_{0}(h) ,  h/ \bar{h})   
\end{align*}   
%where $ \bar{h} \in E^{\times} $ denotes the conjugate of $ h \in E^{\times} $ under the action of the non-trivial element in $ \Gal(E / \QQ) $. 
An  application of Hilbert 90  implies that  the map $ \nu : H \to T $, $ h \mapsto  h/\bar{h} $ is surjective and induces an isomorphism $$  E^{\times} /\QQ_{\ell}^{\times}    =  \mathcal{O}_{E}^{\times}/\ZZ_{\ell}^{\times}  \simeq  T  .   $$
In  particular,  $ T $  is compact. Set $$ T_{1} :   =      \nu    ( \ZZ_{\ell}^{\times}  +  \ell  \mathcal{O}_{E})   \subset   T .   $$      Note   that  $  [T :   T_{1} ]  =  \ell +   1   $.    
We set  $$ U =  \mathcal{O}_{E}^{\times} ,   \quad    K  =  \GL_{2}(\QQ_{\ell} )  \times  T  ,   \quad     L   =   \GL_{2}(\ZZ_{\ell})  \times  T_{1}   . $$
We define $$ \mathfrak{H} =   \ell^{2}  \ch(K)   -      \left   (   \ch ( K  \varsigma ^ { - 1 }   K )  -  (\ell - 1 ) \ch(K \tau ^  { - 1  }    K )   \right   )    +   \ch(K \upsilon ^{-1}  K )   $$ 
where $$    \varsigma   =   \left (  \left  (  \begin{smallmatrix}  \ell^{2}  \\ & 1   \end{smallmatrix}   \right )  , 1  \right )  , \quad   \tau =  \left (   \left (   \begin{smallmatrix}  \ell  \\ &  \ell   \end{smallmatrix}   \right ) ,  1     \right  ) ,        \quad     \upsilon  =  \left  (   \left ( \begin{smallmatrix}  \ell^{2}  &   \\[-0.1em]       &   \ell^{2}   \end{smallmatrix}  \right  ) ,  1   \right   )  .  $$
\begin{remark} The local emmbedding  arising from the global embedding in \S \ref{cohoform} will in general not agree with the embedding considered here. However,  the  content    of   Remark  \ref{notdiagembedding}   also  applies   here.    
\end{remark}
\begin{remark}  We are using $ 1 $ in the second component of the elements $ \varsigma $, $ \tau $, $ \upsilon $ since  the geometric Frobenius at a place of an imaginary  quadratic field above an inert rational prime  $ \ell  $   restricts to   the     trivial element in the Galois group of any anticyclotomic  Galois  extension that is  unramified at $ \ell $.      
\end{remark}

\begin{remark} If we assume that   % Note that if $ \Pi $ is the local piece of an automorphic representation attached to the Tate module $ \mathrm{T}_{p}(A) $ from the introduction, then the reverse characteristic polynomial
the Euler factor (\ref{PellX}) factors  as   $$ P_{\ell}(X) =  ( 1 - \alpha^{-1}\ell^{-\frac{1}{2}}X) ( 1 - \beta ^{-1}   \ell^{-\frac{1}{2}} X)   $$ over $\CC$, then  the  Euler   factor  (\ref{PlambdaX}) equals  \begin{align*}  P_{\lambda}(X)   &     =  ( 1 - \alpha^{-2} \ell^{-1}X)( 1 -   \beta^{-2}  \ell^{-1} X )    \\
&  =   1 - \ell^{-1}  (\alpha^{-2} + \beta^{-2}   )   X    +  \ell^{-2}  (\alpha\beta)^{-2}  X^{2}   .    
\end{align*}     Let $  \Pi $  denote   the  unramified principal  series  representation with Satake  parameters $ \alpha $, $ \beta $.  Consider $  \Pi  $ as a representation of $ G $ where the action of $ T $ is trivial   and  let  $  \phi^{\circ}  \in  \Pi^{K}$ denote a non-zero  element.       Then $ \mathfrak{H}$ above satisfies  $$ \mathfrak{H}  \cdot  \phi^{\circ}   =   \ell^{2}  \cdot   (1 - \alpha^{-2}  \ell^{-1}  )(1 -  \beta^{-2}  \ell^{-1} )    \cdot  \phi^{\circ} .   $$     
The normalized expression $ \mathfrak{H}$ is obtained by inverting the Satake transform, and the reader can find the  relevant  computations  in   \cite[\S 4.5]{CZE}.   
\end{remark}   
Recall that for $ g \in G $, we denote $ H_{g} = H \cap g K g^{-1} $ and $ V_{g} =  H \cap gLg^{-1}$.   
\begin{lemma}   \label{degreelemma}       $ H_{g} = V_{g}   $ if $ Hg K \neq H K  $.       
%\begin{itemize}
%\end{itemize}   
\end{lemma} 
\begin{proof}     Note that $ H_{g} \subset U $ for any $ g \in G $, since $ U $ is the unique maximal compact open subgroup of $H  $.   By Iwasawa decomposition for $ G $, any coset in $ H\backslash G/K    $  has  a   representative  of the   form  $$  g(u,  x)   =  \left (  \begin{smallmatrix} \ell^{u} & x\\[0.1em] &    1    \end{smallmatrix}  \right ) $$ 
where  $ u \in \mathbb{Z} $ and $ x \in \QQ_{\ell}$. Since the equality of  $ H_{g} $ and $  V_{g} $ does not depend on the class of $  g  $ in    $  H  \backslash  G /  K  $,     it  suffices to verify the claim for $ g = g(u,x) $.  
%Recall that for $ g \in G $, we denote $ H_{g} = H \cap g K g^{-1} $ and $ V_{g} =  H \cap gLg^{-1}$.   
So let $ h \in H_{g(u,  x) }  \subset   U $. Then $  g( u  , x  ) ^{-1} h g(u, x)    \in  K  $  by  definition.  Let us  write    $ h   =   \left  (  \begin{smallmatrix}  a  &  b  \delta^{2}   \\[0.05em]        b &  a  \end{smallmatrix}  \right ) $ where $ a , b \in \ZZ_{\ell} $.    Now $$     g(u, x)^{-1} h g( u , x  )    K    =   \left (   \begin{matrix}  a  - bx    &   b \ell^{-u}( \delta^{2} - x^{2}) \\ b \ell^{u}    &  a + b x   \end{matrix}   \right   )  K  . $$
If $ u < 0 $ or $ x \in \QQ_{\ell} \setminus \ZZ_{\ell} $,  then $ b $ must be in $ \ell \ZZ_{\ell} $ for the displayed  matrix to be in $ K $,  which  implies  $ H_{g} =  V_{g}  $ in this case.  If  $  u  > 0 $, then since   $ \delta^{2} - x ^{2} $ is either in $ \ZZ_{\ell}^{\times} $ or $ \QQ_{\ell} \setminus \ZZ_{\ell} $,  we  see that  $ b \ell^{-u} (\delta^{2} - x^{2}) $ cannot be in $ \ZZ_{\ell} $ unless $ b \in \ell \ZZ_{\ell} $, and so $ H_{g} = V_{g} $  in this  case  too.  So  the  only   possibility for $ H_{g}  $ to not be equal to $ V_{g} $ is $ u = 0 $ and $ x \in \ZZ_{\ell} $, in which case  $ H g(u,x) K = HK $.    
\end{proof}   
For $ g \in G $,  let $  \mathfrak{h}_{g} : H \to \ZZ $ denote the $  g $-twisted  $   H   $-restriction of $  \mathfrak{H}   $.  Lemma  \ref{degreelemma}  implies that the criteria of Theorem  \ref{zetateo}  is  trivially  satisfied for all $ g $ unless $ Hg K = H K $.  Now we have the decomposition $$     K  \varsigma^{-1}  K / K  =  \bigsqcup_{i = 0}^{\ell  -  1 }     \left (  \begin{smallmatrix}  1 / \ell^{2}  \\  i / \ell    &  1     \end{smallmatrix}      \right )    K   \sqcup  \bigsqcup_{j =  0  }^{\ell^{2} -1}    \left  (  \begin{smallmatrix}  1   &    j / \ell^{2}  \\ &  1 / \ell^{2}    \end{smallmatrix}  \right  )    K    $$
and none of the cosets of $ G/ K$  appearing in this decomposition  map   to $ H K \in H \backslash G/ K $ under   the  projection   $ G/ K  \to H \backslash G / K  $, $ g K \mapsto HgK $.   Therefore, 
$$ \mathfrak{h}_{1_{G}}  =      \ell^{2}  \,  \ch(U)    + ( \ell - 1 )  \, \ch( U\ell ^{-1}  U )  +  \ch( U \ell^{-2} U)  .  $$
Since $$   \deg (  \mathfrak{h}_{1_{G}, *}  ) =   \ell(\ell + 1)   \in   [H_{1_{G}} : V_{1_{G}}]  \cdot  \ZZ_{p} =  (\ell + 1 )  \cdot  \ZZ_{p} $$ 
a  zeta  element  exists in this scenario.   

\begin{remark}   \label{hardtowrite}     The actual zeta element  is  quite complicated to write down even in this simple situation, since the volumes of the twisted intersections
$ 
V_{g} = V \cap g L g^{-1} $ for $ g \in H \backslash H \cdot \mathrm{Supp}(\mathfrak{H}) / K,
$
are given by intricate polynomial expressions in $\ell$. %In higher-dimensional settings,  the abstract criteria
The abstract criteria  of Theorem~\ref{zetateo}   %   allows   
provides   many   similar advantages  in  higher  dimensional  settings.  
\end{remark}     
\bibliographystyle{amsalpha}    
\bibliography{refs}
\Addresses
\end{document}